\documentclass[11pt]{amsart}
\usepackage[utf8]{inputenc}
\usepackage{cite}

\usepackage{epsfig}
\usepackage{amssymb,amsmath,amsthm,amscd}
\usepackage{latexsym}
\usepackage[margin=1in]{geometry}
\usepackage[all,knot]{xy}
\usepackage{color}
\usepackage{mathrsfs}
\usepackage{eucal}
\usepackage{subcaption}
\usepackage{etoolbox}

\usepackage{graphicx}
\usepackage{epstopdf}
\usepackage[colorlinks=true,allcolors=blue]{hyperref}

\DeclareMathAlphabet{\mathpzc}{OT1}{pzc}{m}{it}

\setlength{\parskip}{0.1in}
\setlength{\parindent}{0mm}
\numberwithin{equation}{section}

\newtheorem{thm}{Theorem}[section]
\newtheorem{prop}[thm]{Proposition}
\newtheorem{cor}[thm]{Corollary}
\newtheorem{lem}[thm]{Lemma}

\newtheorem{conj}{Conjecture}[section]

\theoremstyle{definition}
\newtheorem{defn}[thm]{Definition}
\newtheorem{exmp}[thm]{Example}

\newtheorem{rem}[thm]{Remark}

\newcommand{\Z}{\mathbb{Z}}
\newcommand{\R}{\mathbb{R}}
\newcommand{\C}{\mathbb{C}}

\newcommand{\F}{\mathbb{F}}
\newcommand{\J}{\mathcal{J}}
\renewcommand{\O}{\mathcal{O}}

\newcommand{\M}{\mathscr{M}}
\newcommand{\scrR}{\mathscr{R}}
\newcommand{\G}{\mathscr{G}}

\newcommand{\A}{\mathscr{A}}

\newcommand{\D}{\mathbb{D}}
\newcommand{\bbD}{\mathbb{D}}
\newcommand{\scrD}{\mathscr{D}}

\newcommand{\dbar}{\overline{\partial}}
\newcommand{\PD}{\mathrm{PD}}
\newcommand{\rk}{\mathrm{rk}}
\newcommand{\Id}{\operatorname{id}}
\newcommand{\Hom}{\mathrm{Hom}}

\newcommand{\su}{\mathfrak{su}}
\newcommand{\Tr}{\operatorname{Tr}}

\newcommand{\Tor}{\operatorname{Tor}}

\renewcommand{\L}{\mathscr{L}}

\newcommand{\im}{\operatorname{im}}

\newcommand{\gr}{\operatorname{gr}}
\newcommand{\id}{\operatorname{id}}
\newcommand{\calN}{\mathcal{N}}
\renewcommand{\Re}{\operatorname{Re}}

\newcommand{\T}{\mathbb{T}}
\newcommand{\CF}{\mathrm{CF}}

\newcommand{\HF}{\mathrm{HF}}
\newcommand{\HFhat}{\widehat{\mathbf{HF}}}

\newcommand{\g}{\mathfrak{g}}

\newcommand{\SI}{\operatorname{SI}}
\newcommand{\CSI}{\operatorname{CSI}}

\newcommand{\Symp}{\mathbf{Symp}}

\newcommand{\bfalpha}{\boldsymbol{\alpha}}
\newcommand{\bfbeta}{\boldsymbol{\beta}}
\newcommand{\bfgamma}{\boldsymbol{\gamma}}
\newcommand{\bfdelta}{\boldsymbol{\delta}}
\newcommand{\bfx}{\mathbf{x}}
\newcommand{\Whit}{\operatorname{Whit}}
\newcommand{\Map}{\operatorname{Map}}
\newcommand{\calH}{\mathcal{H}}
\newcommand{\bfi}{\mathbf{i}}
\newcommand{\bfj}{\mathbf{j}}
\newcommand{\bfk}{\mathbf{k}}

\newcommand{\bbK}{\mathbb{K}}
\newcommand{\Bord}{\mathbf{Bord}}
\newcommand{\Crit}{\operatorname{Crit}}

\newcommand{\bfA}{\mathbf{A}}
\newcommand{\Hol}{\operatorname{Hol}}
\newcommand{\calD}{\mathcal{D}}

\newcommand{\Kh}{\operatorname{Kh}}

\newcommand{\Cone}{\operatorname{Cone}}

\newcommand{\calM}{\mathcal{M}}
\newcommand{\calE}{\mathcal{E}}
\newcommand{\calI}{\mathcal{I}}

\newcommand{\CS}{\operatorname{CS}}
\newcommand{\IC}{\mathrm{IC}}
\newcommand{\I}{\mathrm{I}}
\newcommand{\scrC}{\mathscr{C}}
\newcommand{\scrF}{\mathscr{F}}

\newcommand{\ad}{\operatorname{ad}}

\newcommand{\SU}{\mathrm{SU}}
\newcommand{\SO}{\mathrm{SO}}

\newcommand{\MCG}{\mathrm{MCG}}
\newcommand{\crit}{\mathrm{crit}}

\newcommand\todo[1]{\textbf{\textcolor{red}{#1}}}
\newcommand\hide[1]{}

\title{A Symplectic Instanton Homology via Traceless Character Varieties}
\author{Henry T. Horton}

\begin{document}

\begin{abstract}
Since its inception, Floer homology has been an important tool in low-dimensional topology. Floer theoretic invariants of $3$-manifolds tend to be either gauge theoretic or symplecto-geometric in nature, and there is a general philosophy that each gauge theoretic Floer homology should have a corresponding symplectic Floer homology and vice-versa. In this article, we construct a Lagrangian Floer invariant for any closed, oriented $3$-manifold $Y$ (called the {\bf symplectic instanton homology} of $Y$ and denoted $\SI(Y)$) which is conjecturally equivalent to a Floer homology defined using a certain variant of Yang-Mills gauge theory. The crucial ingredient for defining $\SI(Y)$ is the use of traceless character varieties in the symplectic setting, which allow us to avoid the debilitating technical hurdles present when one attempts to define a symplectic version of instanton Floer homologies. Furthermore, by studying the effect of Dehn surgeries on traceless character varieties, we establish a surgery exact triangle using work of Seidel that relates the geometry of Lefschetz fibrations with exact triangles in Lagrangian Floer theory.
\end{abstract}

\maketitle

\tableofcontents

\addtocontents{toc}{\protect\setcounter{tocdepth}{1}}

\section{Introduction}


As originally defined by Andreas Floer \cite{instanton-invariant}, instanton homology is an invariant of integer homology $3$-spheres constructed using an infinite-dimensional analogue of the Morse-Smale-Witten chain complex. Roughly speaking, for a integer homology $3$-sphere $Y$, Floer's instanton chain complex is generated by gauge equivalence classes of nontrivial (perturbed) flat connections on the trivial $\SU(2)$-bundle over $Y$, and the differential counts anti-self-dual $\SU(2)$-connections (instantons) on $Y \times \R$ which have the appropriate asymptotics.

Using similar ideas, Floer \cite{Floer1} also defined a homological invariant for pairs of Lagrangian submanifolds $L_0$, $L_1$ in some fixed symplectic manifold $(M, \omega)$. The chain complex for this Lagrangian Floer homology is generated by intersection points between the Lagrangians, and the differential counts pseudoholomorphic strips  $u: \R \times [0,1] \longrightarrow M$ with $u(\R, 0) \subset L_0$, $u(\R, 1) \subset L_1$, and the appropriate asymptotic behavior.

Atiyah \cite{atiyah-floer} had the remarkable insight that Floer's instanton homology should have an interpretation in terms of Lagrangian Floer theory. Namely, if one chooses a genus $g$ Heegaard splitting for a $3$-manifold $Y$, one can consider the $\SU(2)$-character varieties $L_\alpha$, $L_\beta$ of the two pieces as lying in the $\SU(2)$-character variety $M(\Sigma_g)$ of the Heegaard surface, by restriction of representations. $M(\Sigma_g)$ is a stratified symplectic space, and $L_\alpha$, $L_\beta$ are Lagrangian, so one could hope to define the Lagrangian Floer homology of $(L_\alpha, L_\beta)$. The {\bf Atiyah-Floer conjecture} says that, assuming this Lagrangian Floer homology can be defined, it is equal to the instanton Floer homology of $Y$ when $Y$ is a integer homology $3$-sphere.

Unfortunately, there are difficult technical issues that make the Atiyah-Floer conjecture challenging to prove. The principal issue is that $M(\Sigma_g)$ is not a smooth manifold, preventing one from defining Lagrangian Floer groups in a straightforward way. Salamon and Wehrheim \cite{salamon-af,salamon-wehrheim,wehrheim-af} have initiated a program to understand and prove the Atiyah-Floer conjecture, but the problem remains open.

The purpose of the present work is to move towards a better understanding of the interplay between instanton homology and Lagrangian Floer theory by considering a suitable modification of the relevant $\SU(2)$-character varieties that prevents the existence of singularities. Although the resulting Lagrangian Floer homology is no longer equal to Floer's instanton homology, even for $S^3$, a suitable modification of instanton homology due to Kronheimer and Mrowka \cite{yaft} appears to be the partner for our theory in a variant of the Atiyah-Floer conjecture.

The idea of our construction is roughly the following. Let $Y$ be a closed, oriented $3$-manifold and $\Sigma_g$ a genus $g$ Heegaard surface in $Y$, so that $Y = H_\alpha \cup_{\Sigma_g} H_\beta$ for two genus $g$ handlebodies $H_\alpha$, $H_\beta$. Now, choose a point $z \in \Sigma_g$ and in a small neighborhood of $z$, remove a regular neighborhood of a $\theta$-graph (\emph{i.e.} a graph with two vertices and three edges connecting them) from $Y$, where the $\theta$-graph is embedded so that each edge intersects $\Sigma_g$ once and each handlebody $H_\alpha$, $H_\beta$ contains one of the vertices. Write $\Sigma_g^\theta$, $H_\alpha^\theta$, and $H_\beta^\theta$ for the intersections of the pieces of the Heegaard decomposition with the complement of this $\theta$-graph.

Now, instead of looking at all conjugacy classes of $\SU(2)$-representations of the fundamental groups of each piece of the decomposition, we will add the condition that meridians of the edges of the $\theta$-graph should be sent to the conjugacy class of traceless $\SU(2)$ matrices. Hence, for example, the appropriate character variety to associate to $\Sigma_g^\theta$ should be
\[
	\scrR_{g,3} = \left.\left\{ A_1, B_1, \dots, A_g, B_g, C_1, C_2, C_3 \in \SU(2) ~\left|~ \begin{array}{c}\prod_{k = 1}^g [A_k, B_k] = C_1C_2C_3, \\ \operatorname{Tr}(C_k) = 0 \end{array}\right\}\right.\right/\text{conj}.
\]
Write $L_\alpha$, $L_\beta \subset \scrR_{g,3}$ for the images of the traceless character varieties of $H_\alpha^\theta$, $H_\beta^\theta$ under restriction to the boundary.

Similar to the case before the $\theta$-graph was removed, $\scrR_{g,3}$ is symplectic and $L_\alpha$, $L_\beta$ are Lagrangian submanifolds of $\scrR_{g,3}$. Furthermore, $\scrR_{g,3}$ is in fact smooth and compact, and we may define the Lagrangian Floer groups $\HF(L_\alpha, L_\beta)$ without difficulty. Our first main result is that this Floer homology is an invariant of the original $3$-manifold $Y$ (see Corollary \ref{cor:naturality}):

\begin{thm}
The Lagrangian Floer homology described above is independent of the Heegaard splitting of $Y$, and it associates a finitely generated abelian group $\SI(Y) = \HF(L_\alpha, L_\beta)$ to $Y$.
\end{thm}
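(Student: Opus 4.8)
The plan is to reduce the statement to the Reidemeister--Singer theorem: any two Heegaard splittings of $Y$ become isotopic after a finite number of stabilizations, and the full package of auxiliary data used in the construction --- the basepoint $z\in\Sigma_g$, the embedded $\theta$-graph near $z$, the almost complex structure on $\M_{g,3}$, and the Hamiltonian perturbation achieving $L_\alpha \pitchfork L_\beta$ --- can be tracked through these moves. It therefore suffices to prove: (i) for fixed Heegaard data, $\HF(L_\alpha,L_\beta)$ is independent of the analytic choices; (ii) an ambient isotopy of $(Y,\Sigma_g,z,\theta)$ induces a canonical isomorphism; and (iii) a stabilization of the splitting leaves the group unchanged. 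Since $\M_{g,3}$ is closed and $L_\alpha,L_\beta$ are compact, the group-theoretic conclusion (finite generation of $\SI(Y)$) is immediate once (i)--(iii) are in place.

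Steps (i) and (ii) are essentially formal. For (i) one uses the standard continuation-map argument in Lagrangian Floer theory: a path of the analytic data induces a chain map, a homotopy of paths a chain homotopy, and concatenation of paths corresponds up to homotopy to composition of continuation maps, yielding a transitive system of canonical isomorphisms. For (ii), an isotopy of $Y$ carrying $(\Sigma_g,z,\theta)$ to another admissible configuration induces, by functoriality of the traceless character-variety construction, a symplectomorphism between the corresponding copies of $\M_{g,3}$ matching the $\alpha$- and $\beta$-Lagrangians; by the isotopy extension theorem, together with connectedness of the space of admissible $(z,\theta)$, this symplectomorphism is well defined up to Hamiltonian isotopy and hence acts canonically on $\HF$. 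One point to verify here is that sliding $z$ and the $\theta$-graph really does act through symplectomorphisms of $\M_{g,3}$ preserving monotonicity; this is where the mapping-class-group action on the punctured character variety, compatible with the traceless boundary condition, is used.

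Step (iii) is the heart of the matter. A stabilization enlarges $\Sigma_g$ to $\Sigma_{g+1}$ in a region disjoint from $z$, adding a handle and a dual pair of curves $\alpha_{g+1},\beta_{g+1}$ meeting transversally in one point and disjoint from the old curves. On the level of moduli spaces, imposing triviality of the two new holonomies cuts $\M_{g+1,3}$ down to $\M_{g,3}$, and one checks directly that $L_\alpha^{\mathrm{new}}\cap L_\beta^{\mathrm{new}}$ is canonically identified with $L_\alpha\cap L_\beta$ --- so the two Floer complexes have the same generators, exactly as for stabilization invariance of $\widehat{\HF}$. To match the differentials I would package the stabilization as a (monotone) Lagrangian correspondence
\[
	\Lambda\subset\M_{g,3}^-\times\M_{g+1,3}
\]
associated with the elementary cobordism $\Sigma_g\to\Sigma_{g+1}$, check that its geometric composition with $L_\alpha$ and with $L_\beta$ recovers the stabilized Lagrangians and is \emph{embedded}, and then invoke the geometric composition theorem of Wehrheim and Woodward together with a neck-stretching/degeneration argument showing that pseudoholomorphic strips in $\M_{g+1,3}$ bounding $L_\alpha^{\mathrm{new}},L_\beta^{\mathrm{new}}$ localize onto strips in $\M_{g,3}$ bounding $L_\alpha,L_\beta$ (the new factor, carrying a single intersection point, contributing a trivial tensor factor). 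The main obstacle is precisely here: one must understand the symplectic geometry of $\M_{g+1,3}$ near the stabilization region well enough to identify the stabilized Lagrangians as geometric compositions and to verify the embeddedness and monotonicity hypotheses, and then carry out the Gromov-compactness and gluing analysis governing the degeneration. Should the general machinery prove unwieldy, a fallback is to pass to an explicit local model for the new handle --- where the relevant traceless character variety is a $2$-sphere --- and to construct an honest Hamiltonian isotopy reducing the strip count to the unstabilized one.

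Finally, to promote ``invariant up to isomorphism'' to the \emph{concrete} group $\SI(Y)$ one checks that the isomorphisms from (i)--(iii) are natural and mutually compatible, i.e.\ that the stabilization isomorphisms do not depend on where or in what order the stabilizations are performed, nor on the chosen sequence of moves connecting two given splittings. This is a coherence statement of the type appearing in the naturality theorems for Heegaard Floer homology, and I would deduce it by combining the refined uniqueness part of Reidemeister--Singer with the commutation relations established along the way. Assembling the resulting isomorphisms into a transitive system indexed by all admissible Heegaard data then yields a well-defined group $\SI(Y)$, proving Corollary~\ref{cor:naturality}.
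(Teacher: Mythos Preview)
Your overall strategy matches the paper's: reduce to Heegaard moves and handle stabilization via Wehrheim--Woodward's embedded-composition theorem. Two points of comparison are worth noting.

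First, your step (ii) is more elaborate than necessary. Because $L_\alpha$ is defined as the locus in $\M_{g,3}$ where the $\alpha$-curves have trivial holonomy, it depends only on the \emph{handlebody} $H_\alpha$, not on the particular attaching curves: isotopies of the $\alpha$-curves leave $L_\alpha$ literally unchanged, and a handleslide of $\alpha_j$ over $\alpha_k$ does too (if $\Hol(\alpha_j')=\Hol(\alpha_k)=I$ on the pair-of-pants then $\Hol(\alpha_j)=I$ automatically). So the paper dispenses with isotopy and handleslide invariance in one line each, with no appeal to mapping-class-group actions or continuation maps. This is a genuine simplification over the Heegaard Floer situation you seem to have in mind, where the Lagrangian tori really do move.

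Second, your stabilization argument as written has a gap. A single correspondence $\Lambda\subset\M_{g,3}^-\times\M_{g+1,3}$ coming from ``the elementary cobordism $\Sigma_g\to\Sigma_{g+1}$'' cannot satisfy both $L_\alpha\circ\Lambda=L_{\alpha'}$ and $L_\beta\circ\Lambda=L_{\beta'}$: the cobordism imposes triviality along \emph{one} new curve, say $\alpha_{g+1}$, so $L_\beta\circ\Lambda$ forces $\rho(\alpha_{g+1})=I$ rather than $\rho(\beta_{g+1})=I$. The paper instead uses \emph{two} correspondences, $L_{\alpha\alpha'}$ (imposing $\rho(\beta_{g+1})=I$) and $L_{\beta'\beta}$ (imposing $\rho(\alpha_{g+1})=I$), together with the key identity $L_{\alpha\alpha'}\circ L_{\beta'\beta}=\Delta_{\M_{g,3}}$. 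The chain of isomorphisms is then
\[
\HF(L_\alpha,L_\beta)=\HF(L_\alpha,\Delta,L_\beta)\cong\HF(L_\alpha,L_{\alpha\alpha'},L_{\beta'\beta},L_\beta)\cong\HF(L_{\alpha'},L_{\beta'}),
\]
with no neck-stretching analysis needed beyond the Wehrheim--Woodward theorem itself. (Your fallback local model is also off: the new handle carries an $\SU(2)$ factor, not an $S^2$.)

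For naturality the paper does not attempt to verify coherence of all possible sequences of moves directly; instead it invokes the Juh\'asz--Thurston criterion, which reduces the question to checking that a single \emph{simple handleswap} loop induces the identity. In the present setup this is immediate, since handleslides do not move the Lagrangians and the diffeomorphism in the handleswap fixes the unique intersection point.
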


We call $\SI(Y)$ the {\bf symplectic instanton homology} of $Y$. One of the first properties of symplectic instanton homology that we can prove is that it obeys a K\"unneth principle for connected sums (see Theorem \ref{thm:connectsum}):
\[
	\SI(Y \# Y') \cong (\SI(Y) \otimes \SI(Y')) \oplus \Tor(\SI(Y), \SI(Y')).
\]
There is further structure possessed by symplectic instanton homology. Starting from the seminal work of Floer on instanton homology \cite{floer-dehn} (see also \cite{braam-donaldson}), it has been noted that Floer homology invariants of closed $3$-manifolds behave in a controlled way under Dehn surgery. More precisely, let $Y$ be a closed, oriented $3$-manifold and let $(K, \lambda)$ be a framed knot in $Y$. Letting $Y_{\lambda}(K)$ denote the result of $\lambda$-surgery on $K$ (and similarly defining $Y_{\lambda + \mu}(K)$ for $\mu$ a meridian of $K$), there are standard cobordisms $W: Y \longrightarrow Y_\lambda(K)$, $W_\lambda: Y_\lambda(K) \longrightarrow Y_{\lambda + \mu}(K)$, and $W_{\lambda + \mu}: Y_{\lambda + \mu}(K) \longrightarrow Y$, each of which consists of a single $2$-handle attachment. For a ``Floer homology theory''\footnote{Here $\Bord_3^0$ denotes the category whose objects are \emph{connected} closed oriented $3$-manifolds and whose morphisms are \emph{connected} compact oriented $4$-manifolds with appropriate boundary.}
\[
	\A: \Bord_3^0 \longrightarrow \F_2 \,\mathbf{\text{-}Vect},
\]
there should be an exact triangle of the form
\[
	\xymatrix{\A(Y) \ar[rr]^{\A(W)} & & \A(Y_\lambda(K)) \ar[dl]^{\A(W_\lambda)} \\
	 & \A(Y_{\lambda + \mu}(K)) \ar[ul]^{\A(W_{\lambda+\mu})} & }
\]
Exact triangles of this form have been established for many types of Floer homologies of $3$-manifolds: instanton homology \cite{floer-dehn}, monopole Floer homology \cite{monopoles-lens-space}, and Heegaard Floer homology \cite{oz-sz-app} are some prominent examples.

In this article, we establish a surgery exact triangle for symplectic instanton homology of the form described above (although we will postpone the functorial identification of the maps to another article \cite{horton2}). As was the case for Floer's classical instanton homology, we will be required to extend our definition of symplectic instanton homology to incorporate additional structure on $Y$ for the exact triangle to possibly hold. While Floer's invariant needed to be generalized from $\SU(2)$-bundles to $\SO(3)$-bundles on the $3$-manifold $Y$, our symplectic instanton homology needs to take into account $\SO(3)$-bundles on $Y$. In dimension $3$, such bundles are classified by their second Stiefel-Whitney class, and by Poincar\'e duality we may therefore think of this data as coming from a mod $2$ homology class $\omega \in H_1(Y; \F_2)$.

With the above stated, we may describe the major results of this article concerning symplectic instanton homology for nontrivial $\SO(3)$-bundles and Dehn surgery. The first is the construction of an extension of $\SI(Y)$ that takes into account the additional data of a mod $2$ homology class $\omega \in H_1(Y; \F_2)$.

\begin{thm}
For any closed, oriented $3$-manifold $Y$ and homology class $\omega \in H_1(Y; \F_2)$, there is a well-defined {\bf symplectic instanton homology group} $\SI(Y,\omega)$ that is a natural invariant of the pair $(Y,\omega)$.
\end{thm}

With the invariance and functoriality of $\SI(Y,\omega)$ in place, we can now properly state the surgery exact triangle for symplectic instanton homology:

\begin{thm}
For any framed knot $(K, \lambda)$ in a closed, oriented $3$-manifold $Y$, there is an exact triangle of the form
\[
	\xymatrix{\SI(Y, \omega_K) \ar[rr] & & \SI(Y_\lambda(K)) \ar[dl] \\
	 & \SI(Y_{\lambda + \mu}(K)) \ar[ul] & }
\]
\end{thm}

Note that the surgery exact triangle is for Dehn surgery on a \emph{knot}. For general \emph{links}, the surgery exact triangle must be replaced with a spectral sequence. The construction of this spectral sequence and discussion of its applications can be found in the sequel to this article \cite{horton2}.

\textbf{Acknowledgements.} We thank Paul Kirk, Dylan Thurston, and Chris Woodward for useful conversations on various aspects of this work. We also thank Ciprian Manolescu for comments on the first version of this article which lead to a more careful proof of naturality. This article consists of material which makes up part of the author's Ph.D thesis, written at Indiana University.
\section{Symplectic Geometry of Flat Moduli Spaces}

\label{chap:flat-moduli}

Here we will recall the basic definitions and properties of the moduli spaces which we will be concerned with in this work. In what follows, we use the following identification between $\SU(2)$ and the unit quaternions freely:
\[
	\begin{pmatrix} a & b \\ -\bar{b} & \bar{a} \end{pmatrix} \leftrightarrow a + b\bfj.
\]
Note that an $\SU(2)$ matrix having trace zero is equivalent to the corresponding unit quaternion having zero real part.

\subsection{Smooth Topology of Traceless Character Varieties}

\label{sect:smooth-moduli}

Let $M$ be a compact $3$-manifold (possibly with boundary) or a closed surface (without boundary). Let $T \subset M$ be a properly embedded codimension-$2$ submanifold. For each connected component $T_i$ of $T$, one may choose an associated meridian $\mu_i \in \pi_1(M \setminus T)$, which is unique up to conjugation. The {\bf traceless $\SU(2)$-character variety} of $(M,T)$ is the representation space
\[
	\scrR(M,T) = \{\rho: \pi_1(M \setminus T) \longrightarrow \SU(2) \mid \Tr(\rho(\mu_i)) = 0\}/\text{conjugation}.
\]

Because of its importance in what is to follow, we introduce a special notation for the traceless character varieties of punctured surfaces:
\[
	\scrR_{g,n} = \scrR(\Sigma_g, \{n \text{ pts}\}).
\]
Note that fixing a standard basis for $\pi_1(\Sigma_g \setminus \{n \text{ pts}\})$ gives the {\bf holonomy description}
\[
	\scrR_{g,n} = \left.\left\{ A_1, \dots, A_g, B_1, \dots, B_g, C_1, \dots, C_n \in \SU(2) ~\left|~ \begin{array}{c} \displaystyle \prod_{j = 1}^g [A_j, B_j] \prod_{k = 1}^n C_k = I \\ \Tr(C_k) = 0 \end{array} \right\}\right/\text{conj.}\right.,
\]
and we can therefore denote elements of $\scrR_{g,n}$ by $[A_1, \dots, A_g, B_1, \dots, B_g, C_1, \dots, C_n]$.

\begin{prop}
If $n$ is odd, then $\scrR_{g,n}$ is a smooth manifold of dimension $6g - 6 + 2n$.
\end{prop}

\begin{proof}
We must show that if $n$ is odd, then any traceless representation $\rho: \pi_1(\Sigma_{g,n}) \longrightarrow \SU(2))$ has central stabilizer. Note that the only possible stabilizers of a traceless representation (assuming $n > 0$) are $\Z/2$ or $\mathrm{U}(1)$. Hence we just need to show any such $\rho$ cannot have stabilizer a circle subgroup of $\SU(2)$.

Suppose for the sake of contradiction that $\rho$ does have a circle subgroup as its stabilizer. Then each $A_k, B_k, C_k$ lies in the same circle subgroup of $\SU(2)$, and in particular they all commute. Hence
\[
	C_1 C_2 \cdots C_n = 1.
\]
Note that any circle subgroup of $\SU(2)$ intersects the conjugacy class of traceless matrices precisely in a pair $\{Q, -Q\}$ for some $Q \in \SU(2)$ with $\operatorname{Tr}(Q) = 0$. Therefore $C_j = \pm Q$ for each $j = 1, \dots, n$ and we see that
\[
	\pm Q^n = 1
\]
for some odd $n$. Since all traceless $\SU(2)$-matrices have order $4$, the above equation is equivalent to $\pm Q = 1$, which is not possible when $\operatorname{Tr}(Q) = 0$.
\end{proof}

On the other hand, we have the following:

\begin{prop}
If $n$ is even, then $\scrR_{g,n}$ has singularities.
\end{prop}

\begin{proof}
It suffices to find a representation with noncentral stabilizer. One such representation is
\begin{align*}
	A_k \mapsto \bfi, & \quad \quad k = 1, \dots, g; \\
	B_k \mapsto \bfi, & \quad \quad k = 1, \dots, g; \\
	C_j \mapsto (-1)^j \bfi, & \quad \quad j = 1, \dots, n.
\end{align*}
This certainly defines a traceless representation $\rho \in \Hom(\pi_1(\Sigma_g \setminus \{n \text{ pts}\}), \SU(2))$ for $n$ even, and the stabilizer of $\rho$ is the circle subgroup through $\bfi$. Therefore $[\rho] \in \scrR_{g,n}$ is a singular point of the traceless character variety.
\end{proof}

We can explicitly describe $\scrR_{g,n}$ for $g = 0$ and $n$ small.

\begin{exmp}
\label{exmp:Rgn}
(1) $\scrR_{0,3}$ is a single point. To see this, suppose $C_1, C_2, C_3 \in \SU(2)$ are traceless and $C_1 C_2 C_3 = 1$. We may conjugate so that $C_1 = \bfi$, and we may further conjugate, preserving the condition $C_1 = \bfi$, so that $C_2$ lies in the $\bfi\bfj$-plane, \emph{i.e.} $C_2 = e^{\gamma \bfk}\bfi$ for some $\gamma \in [0, \pi]$. Now
\[
	\bfi e^{\gamma \bfk}\bfi C_3 = 1 \quad \implies \quad C_3 = - e^{\gamma \bfk}.
\]
The requirement that $\operatorname{Tr}(C_3) = 0$ and $\gamma \in [0, \pi]$ forces $\gamma = \pi/2$, so that $C_2 = \bfj$ and $C_3 = -\bfk$. Hence the only element of $\scrR_{0,3}$ is $[\bfi, \bfj, -\bfk]$.

(2) $\scrR_{0,4}$ is the pillowcase orbifold, as is shown in \cite[Proposition 3.1]{HHK1}. Hedden, Herald, and Kirk \cite{HHK2} study the Lagrangian Floer homology of certain traceless character varieties of tangles in $\scrR_{0,4}$, and in certain explicit examples they find the resulting Floer homology matches Kronheimer and Mrowka's singular instanton homology \cite{Kh-detector}, suggesting a possible relationship by a variant of the Atiyah-Floer conjecture. 

(3) $\scrR_{0,5} \cong \C P^2 \# 5\overline{\C P}^2$. To see this, we first note that $\scrR_{0,5}$ is a del Pezzo surface, since it is K\"ahler (by the Mehta-Seshadri theorem, see Remark \ref{rem:mehta-seshadri} below) and monotone (as explained in Proposition \ref{prop:monotone} below). Del Pezzo surfaces are completely classified: smoothly, they are diffeomorphic to $S^2 \times S^2$ or $\C P^2 \# d\overline{\C P}^2$ ($d \leq 8$). Therefore we can determine $\scrR_{0,5}$ by computing its homology. Boden \cite{boden-orbifold} showed that the Poincar\'e polynomial of $\scrR_{0,5}$ is
\[
	P_t(\scrR_{0,5}) = 1 + 6t^2 + t^4,
\]
so that by the fact that $\scrR_{0,5}$ is del Pezzo, we have $\scrR_{0,5} \cong \C P^2 \# 5\overline{\C P}^2$.

(4) $\scrR_{0,6}$ is a branched double cover of $\C P^3$ with branch set a singular Kummer surface $K$. Here one identifies $\C P^3$ with the classical genus $2$ $\SU(2)$-character variety $\chi(\Sigma_2)$, and $K$ is identified as the Jacobian variety of abelian representations $\mathrm{Jac}(\Sigma_2) \subset \Hom(\pi_1(\Sigma_2), \SU(2))$ modulo complex conjugation. The details of this identification are given in \cite{kirk-6puncture}.
\end{exmp}

\subsection{Gauge Theory and Tangent Spaces}

\label{sect:gauge-theory}

The traceless character variety $\scrR_{g,n}$ has a very useful description as a particular moduli space of flat connections. In this section, we give the details of this gauge theoretic interpretation of $\scrR_{g,n}$. Throughout, we fix a maximal torus $\T_{\bfi} = \{\exp(2\pi \bfi t) \mid t \in [0, 1)\}$ for $\SU(2)$ as well as its Lie algebra $\mathfrak{t}_{\bfi} = \{\bfi t \mid t \in \R \} \subset \su(2)$.

Let $\Sigma_{g,n} = \Sigma_g \setminus \{p_1, \dots, p_n\}$ denote the surface of genus $g$ with $n$ punctures. Fix disjoint holomorphic coordinate charts $(D_i, z_i)$ near each $p_i$ in $\Sigma_g$ such that $z_i: D_i \longrightarrow \D$ is a complex analytic isomorphism (in particular, each chart is a disk) with $z_i^{-1}(0) = p_i$. If we write $w_i = -\log z_i$, $D_i^\ast = D_i \setminus \{p_i\}$, and $C_i = \{(\tau, \theta) \in (0, \infty) \times [0,2\pi]\}/((\tau, 0) \sim (\tau, 2\pi))$, then we obtain a parametrization
\[
	w_i: D_i^\ast \longrightarrow C_i
\]
of each noncompact end of $\Sigma_{g,n}$ as an infinite cylinder.

Let $P$ denote the trivial $\SU(2)$-bundle over $\Sigma_{g,n}$. Fix a base connection $\nabla_0$ on $P$ that has the form $d + \tfrac{1}{4}\bfi \, d\theta_i$ on each cylindrical end $D_i^\ast \subset \Sigma_{g,n}$ (note that such a connection has holonomy $\exp(2\pi \cdot \tfrac{1}{4}\bfi) = \bfi$ around each puncture). We then have an affine space of connections
\[
	\A_{\bfi}(\Sigma_{g,n}) = \nabla_0 + \{a \in \Omega^1(\Sigma_{g,n}; \su(2)) \mid a|_{D_i^\ast} \equiv \xi_i \, d\theta_i, \xi_i \in \mathfrak{t}_{\bfi}, \text{ for } i = 1, \dots, n\}
\]
all of which clearly have traceless holonomy around each puncture $p_i$. The natural group of gauge transformations acting on $\A_{\bfi}(\Sigma_{g,n})$ in this case is
\[
	\G_{\bfi}(\Sigma_{g,n}) = \{\varphi: \Sigma_{g,n} \longrightarrow \SU(2) \mid \varphi|_{D_i^\ast} \equiv g_i \in \T_{\bfi} \text{ for } i = 1, \dots, n\}.
\]
There is also a group of ``asymptotically trivial'' gauge transformations
\[
	\G_{\bfi,0}(\Sigma_{g,n}) = \{\varphi \in \G_{\bfi}(\Sigma_{g,n}) \mid \varphi|_{D_i^\ast} \equiv 1 \text{ for } i = 1, \dots, n\}
\]
which fits into a short exact sequence
\[
	0 \longrightarrow \G_{\bfi,0}(\Sigma_{g,n}) \longrightarrow \G_{\bfi}(\Sigma_{g,n}) \longrightarrow \T_{\bfi}^n \longrightarrow 0.
\]
If $\A_{\bfi}^\flat(\Sigma_{g,n}) \subset \A_{\bfi}(\Sigma_{g,n})$ denotes the subspace of flat connections, then there are two moduli spaces of interest:
\[
	\M_{\bfi}(\Sigma_{g,n}) = \A_{\bfi}^\flat(\Sigma_{g,n})/\G_{\bfi}(\Sigma_{g,n}), \quad\quad\quad \scrF_{\bfi}(\Sigma_{g,n}) = \A_{\bfi}^\flat(\Sigma_{g,n})/\G_{\bfi,0}(\Sigma_{g,n}).
\]
Clearly there is a fiber bundle
\begin{align}
	\label{eqn:gauge-fibration}
	\xymatrix{\T_{\bfi}^n/\{\pm 1\} \ar[r] & \scrF_{\bfi}(\Sigma_{g,n}) \ar[d] \\ & \M_{\bfi}(\Sigma_{g,n})}
\end{align}
(where the fibers are orbits under the action of $\G_{\bfi}/\G_{\bfi,0} \cong \T_{\bfi}^n$; the fiber is $\T_{\bfi}^n/\{\pm 1\}$ since $\{\pm 1\}$ is the stabilizer of each orbit) and the usual holonomy correspondence gives an identification $\M_{\bfi}(\Sigma_{g,n}) \cong \scrR_{g,n}$.

Although the definition of $\scrR_{g,n}$ as a traceless $\SU(2)$-character variety is appealing (and we will use this description frequently), many basic properties of $\scrR_{g,n}$ are easier to see by considering the equivalent space $\M_{\bfi}(\Sigma_{g,n})$ instead. As a first example, we determine the tangent space $T_{[A]}\M_{\bfi}(\Sigma_{g,n})$ at an arbitrary point $[A] \in \M_{\bfi}(\Sigma_{g,n})$. Define
\[
	\Omega^1_{\mathfrak{t}_{\bfi}}(\Sigma_{g,n}; \su(2)) = \{a \in \Omega^1(\Sigma_{g,n}; \su(2)) \mid a|_{D_i^\ast} \equiv \xi_i \, d\theta_i, \xi_i \in \mathfrak{t}_{\bfi}, \text{ for } i = 1, \dots, n\},
\]
\[
	\Omega^k_0(\Sigma_{g,n}; \su(2)) = \{a \in \Omega^k(\Sigma_{g,n}; \su(2)) \mid a|_{D_i^\ast} \equiv 0 \text{ for } i = 1, \dots, n\}.
\]
Note that the definition of $\A_{\bfi}(\Sigma_{g,n})$ as an affine space implies that
\[
	T_A \A_{\bfi}(\Sigma_{g,n}) = \Omega^1_{\mathfrak{t}_{\bfi}}(\Sigma_{g,n}; \su(2)),
\]
and the linearization of the flatness condition gives
\[
	T_A \A_{\bfi}^\flat(\Sigma_{g,n}) = \ker(d_A: \Omega^1_{\mathfrak{t}_{\bfi}}(\Sigma_{g,n}; \su(2)) \longrightarrow \Omega_0^2(\Sigma_{g,n}; \su(2))).
\]
To determine the tangent spaces of the quotients $\scrF_{\bfi}(\Sigma_{g,n})$ and $\M_{\bfi}(\Sigma_{g,n})$, note that the tangent space (in $\A_{\bfi}^\flat(\Sigma_{g,n})$) at $A$ to the gauge orbit $\G_{\bfi,0}(\Sigma_{g,n}) \cdot A$ (resp.\ $\G_{\bfi}(\Sigma_{g,n}) \cdot A$) consists of $1$-forms $d_A \varphi$, where $\varphi \in \Omega_0^0(\Sigma_{g,n}; \su(2))$ (resp.\ $\varphi \in \Omega_{\mathfrak{t}_{\bfi}}^0(\Sigma_{g,n}; \su(2))$), and therefore passing to quotients gives
\[
	T_{[A]} \scrF_{\bfi}(\Sigma_{g,n}) = \ker(d_A: \Omega_{\mathfrak{t}_{\bfi}}^1(\Sigma_{g,n}; \su(2)) \longrightarrow \Omega_0^2(\Sigma_{g,n}; \su(2))),
\]
\[
	T_{[A]} \M_{\bfi}(\Sigma_{g,n}) = T_{[A]} \scrF_{\bfi}(\Sigma_{g,n})/\{a \in T_{[A]} \scrF_{\bfi}(\Sigma_{g,n}) \mid a|_{D_i^\ast} = d_A s_i\}.
\]

\begin{rem}
\label{rem:mehta-seshadri}
We have considered $\scrR_{g,n}$ in two different ways thus far: as a space of conjugacy classes of traceless $\SU(2)$-representations of $\pi_1(\Sigma_{g,n})$, and as a space of gauge equivalence classes of flat $\SU(2)$-connections on $\Sigma_{g,n}$ with traceless holonomy around the boundary components. There is a third point of view that describes $\scrR_{g,n}$ algebro-geometrically. The Mehta-Seshadri theorem \cite{mehta-seshadri} states that $\scrR_{g,n}$ can be identified with a moduli space of semi-stable rank $2$ holomorphic parabolic bundles on $\Sigma_g$, with all parabolic weights equal to $1/4$. We will not use this description in any of our results, so we avoid going into details here.
\end{rem}

\subsection{Algebraic Topology of $\scrR_{g,n}$}

\label{sect:homotopy-gps}

We will make use of some specific information about low-dimensional homotopy groups of $\scrR_{g,n}$. First is the well-known fact that $\scrR_{g,n}$ is $1$-connected (see for example, \cite[Theorem 2.1]{andersen}):

\begin{prop}
$\scrR_{g,n}$ is connected and simply connected.
\end{prop}

Furthermore, $\pi_2(\scrR_{g,n})$ is of interest to us, as the differential on our Floer homology groups count holomorphic representatives of certain relative homotopy classes of disks in $\scrR_{g,n}$.

\begin{prop}
\label{homotopyModuli}
When $n > 0$, $\pi_2(\scrR_{g,n})$ is free of rank $n+1$.
\end{prop}

\begin{proof}
Recursive relations for the Poincar\'e polynomials of various character varieties have long been known. Boden \cite{boden-orbifold} gave a recursive formula for $P_t(\scrR_{g,n})$ in particular, and more recently Street \cite[Theorem 3.8]{street} has shown that Boden's recursive formula can actually be made explicit:
\[
	P_t(\scrR_{g,n}) = \frac{(1 + t^3)^n(1 + t^3)^{2g} - 2^{n-1}t^{2g+n-1}(1+t)^{2g}(1+t^2)}{(1 - t^2)(1 - t^4)}.
\]
From this formula, it is seen that $H^2(\scrR_{g,n}) = \Z^{n+1}$. Since $\scrR_{g,n}$ is $1$-connected, the Hurewicz theorem then implies that $\pi_2(\scrR_{g,n}) = \Z^{n+1}$.
\end{proof}

We can actually describe the $n + 1$ generators of $\pi_2(\scrR_{g,n})$ in more detail. The long exact sequence of the fibration (\ref{eqn:gauge-fibration}) gives
\[
	\xymatrix@R=10pt@C-10pt{\pi_2(\T_{\bfi}^n/\{\pm 1\}) \ar[r] \ar@{=}[d] & \pi_2(\scrF_{\bfi}(\Sigma_{g,n})) \ar[r] \ar@{=}[d] & \pi_2(\scrR_{g,n}) \ar[r] \ar@{=}[d] & \pi_1(\T_{\bfi}^n/\{\pm 1\}) \ar[r] \ar@{=}[d] & \pi_1(\scrF_{\bfi}(\Sigma_{g,n})) \ar@{=}[d] \\
	0 & \Z & \pi_2(\scrF_{\bfi}(\Sigma_{g,n}), \T_{\bfi}^n/\{\pm 1\}) & \Z^n & 0 }
\]
One of the generators of $\pi_2(\scrR_{g,n})$, which we denote $\beta$, is simply the image of the generator of $\pi_2(\scrF_{\bfi}(\Sigma_{g,n}))$ under the projection $\pi: \scrF_{\bfi}(\Sigma_{g,n}) \longrightarrow \M_{\bfi}(\Sigma_{g,n}) = \scrR_{g,n}$. In fact, one may construct a universal rank $2$ complex vector bundle $\mathbb{U} \longrightarrow \Sigma_{g} \times \scrR_{g,n}$ and show that $\beta$ is the Poincar\'e dual of $c_2(\mathbb{U})/[\Sigma_g]$, where $/$ denotes the slant product \cite{biswas-canonical}.

The remaining generators $\gamma_1, \dots, \gamma_n$ are constructed using generators of $\pi_1(\T_{\bfi}^n/\{\pm 1\}) \cong \Z^n$ as follows. Fix a base connection $\nabla_0 = d + a_0 \in \A_{\bfi}^\flat(\Sigma_{g,n})$ in temporal gauge, so that in particular $a_0 = \tfrac{1}{4}\bfi \, d\theta_k$ in the neighborhood $(D_k^\ast, w_k)$ of the $k^\text{th}$ puncture of $\Sigma_{g,n}$. If $f_k: [0,1] \longrightarrow \T_{\bfi}^n \subset \G_{\bfi}(\Sigma_{g,n})$ is the loop of gauge transformations (unique up to homotopy) such that $f_k(t) \equiv e^{\bfi \pi t}$ in the neighborhood $(D_k^\ast, w_k)$ of the $k^\text{th}$ puncture of $\Sigma_{g,n}$ and is the identity outside of a slightly larger neighborhood, then $f_k$ induces a loop $\R/\Z \longrightarrow \T_{\bfi}^n/\{\pm 1\}$ and the short exact sequence above implies there is a disk $\scrD_k$ in $\scrF_\bfi(\Sigma_{g,n})$ whose boundary is the loop $d + f_k(t)^{-1}a_0f_k(t)$. The image of $\scrD_k$ under the projection $\scrF_{\bfi}(\Sigma_{g,n}) \longrightarrow \M_{\bfi}(\Sigma_{g,n}) = \scrR_{g,n}$ represents the homotopy class $\gamma_k \in \pi_2(\scrR_{g,n})$.

\subsection{Symplectic Geometry of $\scrR_{g,n}$}

Going back to the work of Atiyah-Bott \cite{atiyah-bott} and Goldman \cite{goldman}, it has been observed that character varieties of surfaces admit naturally defined symplectic structures. This remains true for relative character varieties such as the traceless character varieties $\scrR_{g,n}$. In this section, we describe the symplectic form $\omega_{g,n}$ on $\scrR_{g,n}$ and some of its crucial properties.

On $T_{A}\A_{\bfi}^\flat(\Sigma_{g,n})$, we define a $2$-form
\begin{align}
	\label{eqn:symp-form}
	\omega_{g,n}(a, b) & = \frac{1}{4\pi^2} \int_{\Sigma_{g,n}} \Tr(a \wedge b).
\end{align}
$\omega_{g,n}$ is closed and degenerate in the $\G_{\bfi, 0}(\Sigma_{g,n})$-directions, and hence descends to $\scrF_{\bfi}(\Sigma_{g,n})$. It is also easily checked that $\omega_{g,n}$ is degenerate in the $\T_{\bfi}^n/\{\pm 1\}$-directions in $\scrF_{\bfi}$, so that it further descends to $\M_{\bfi}(\Sigma_{g,n}) = \scrR_{g,n}$.

We intend to use Lagrangian Floer homology in $(\scrR_{g,n}, \omega_{g,n})$, and there are serious technical difficulties in defining such Floer homologies in general. Therefore is it preferable to ensure that the symplectic geometry of $(\scrR_{g,n}, \omega_{g,n})$ is suitably simple. As explained by Oh \cite{oh-I}, the technical details behind the Floer homology of certain Lagrangians in monotone symplectic manifolds are fairly simple. Recall that $(M, \omega)$ is {\bf monotone} (with {\bf monotonicity constant} $\tau$) if there exists some $\tau > 0$ such that
\[
	[\omega] = \tau c_1(TM).
\]
The symplectic structure $\omega_{g,n}$ on $\scrR_{g,n}$ turns out to be monotone:

\begin{prop}
\label{prop:monotone}
$(\scrR_{g,n}, \omega_{g,n})$ is a monotone symplectic manifold with monotonicity constant $\tau = \tfrac{1}{4}$.
\end{prop}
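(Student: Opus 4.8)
The plan is to produce a constant $\tau>0$ with $\omega|_{\pi_2(\M_{g,n})} = \tau\,c_1(T\M_{g,n})|_{\pi_2(\M_{g,n})}$ and to compute $\tau = \tfrac14$. Since $\M_{g,n}$ is simply connected, Hurewicz identifies $\pi_2$ with $H_2(\M_{g,n};\Z)$, so it suffices to check the identity on a set of generators of $H_2$. (When $6g-6+2n=0$, i.e.\ $(g,n)=(0,3)$, the space is a point and there is nothing to do, so assume the dimension is positive.) I would pass to the algebro-geometric model: by the Mehta--Seshadri correspondence $\M_{g,n}$ is diffeomorphic to a moduli space of rank-$2$ parabolic bundles on $(\Sigma_g,\underline{x})$ with fixed determinant and all parabolic weights equal to $\tfrac14$, and since $n$ is odd there are no strictly semistable points, so this is a smooth projective (indeed Fano) variety with finitely generated $H_2$. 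A convenient generating set for the torsion-free part of $H_2$ is given by the rational curves $\ell_1,\dots,\ell_n$ obtained by varying the parabolic flag at one marked point with the underlying bundle fixed, together with (for $g\geq 1$) one class $\ell_0$ pulled back from the closed-surface moduli space.

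With generators in hand I would compute both classes against them. For $c_1(T\M_{g,n})=-K_{\M_{g,n}}$: at a flat connection $\rho$ the tangent space is the first parabolic cohomology of the adjoint complex on $(\Sigma_g,\underline{x})$, so $\det T\M_{g,n}$ is the determinant line of a family of elliptic complexes, and Grothendieck--Riemann--Roch (equivalently, the Atiyah--Singer family index theorem) expresses $c_1(T\M_{g,n})$ through the K\"unneth components of $p_1(\ad\mathbb{E})$ for the universal bundle $\mathbb{E}$; evaluating this universal expression on $\ell_k$ and $\ell_0$ yields $\langle c_1,\ell_k\rangle$ and $\langle c_1,\ell_0\rangle$, with the marked points entering only through the rank-$3$ adjoint bundle and the weight $\tfrac14$. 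For $[\omega]$: this is the Atiyah--Bott--Goldman form normalised by the basic inner product on $\su(2)$, its cohomology class equals the first Chern class of the Chern--Simons prequantum line bundle, and its restriction to $\ell_k$ is the symplectic area of the coadjoint orbit through $\tfrac14\in\mathfrak{A}$ (namely $\tfrac14$ times the generator of $H^2$ of that $2$-sphere in the normalisation of $\omega$), while its restriction to $\ell_0$ recovers the Atiyah--Bott generator of the closed-surface moduli space. Matching the two sets of numbers gives $[\omega]=\tfrac14\,c_1(T\M_{g,n})$ on every generator, hence monotonicity with $\tau=\tfrac14$; positivity of $\tau$ also reconfirms that every nonconstant sphere has positive area.

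I expect the real difficulty to be twofold. The first, and more bureaucratic, is normalisation bookkeeping: one must track the factors of $2\pi$, the precise relation between the basic inner product and $-\operatorname{tr}(XY)$ on $\su(2)$, and the identification of $[\omega]$ with $c_1$ of the prequantum bundle, because the parabolic weight $\tfrac14$ is exactly the quantity that propagates into $\tau$ and any inconsistency changes the constant. The second, and more substantive, is confirming that $H_2(\M_{g,n};\Z)$ is genuinely generated by $\ell_0,\dots,\ell_n$, so that no exotic spherical class can violate proportionality. The cleanest way to dispatch both issues at once is Meinrenken--Woodward's Duistermaat--Heckman theory for group-valued moment maps: realising $\M_{g,n}$ as the quasi-Hamiltonian quotient $\bigl(\SU(2)^{2g}\times\mathcal{C}_{1/4}^{\,n}\bigr)/\!/\SU(2)$ of double-copies and traceless conjugacy classes, one obtains $[\omega]$ and $c_1(T\M_{g,n})$ simultaneously as images of universal classes under the Kirwan map and compares them at that universal level, before restricting to any homology class. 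This is the content of \cite[Theorem~4.2]{meinrenken-woodward}, and that argument is the one I would ultimately follow, keeping the index computation above as an independent check on the value of $\tau$.
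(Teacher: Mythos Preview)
The paper does not prove this proposition at all; it merely states it and cites \cite[Theorem~4.2]{meinrenken-woodward}. Your proposal ultimately lands on the same reference, so you match the paper's ``proof'' exactly; the preliminary sketch via Mehta--Seshadri and explicit $H_2$-generators is extra and, as you yourself acknowledge, would in any case need the Meinrenken--Woodward machinery to close the gap about generation of $H_2$.
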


\begin{proof}
This is proved in \cite[Theorem 4.2]{meinrenken-woodward}, but we sketch an alternative proof in order to emphasize why we have chosen to send meridians of punctures in $\Sigma_{g,n}$ to traceless $\SU(2)$-matrices rather than some other conjugacy class in $\SU(2)$.

Consider the {\bf extended moduli space} \cite{jeffrey-extended} $\M^{\g}(\Sigma_{g,n})$ of flat connections on the trivial $\SU(2)$-bundle over $\Sigma_{g,n}$, defined as follows. As before, think of $\Sigma_{g,n}$ as a genus $g$ surface with $n$ punctures, and fix cylindrical neighborhoods $(D_k^\ast, w_k)$ centered at each puncture, with the cylinder coordinates of the $k^\text{th}$ boundary component written $(t_k, \theta_k)$. We still denote by $\G_{\bfi,0}(\Sigma_{g,n})$ the group of gauge transformations that are the identity on all of the collar neighborhoods chosen above. Then
\[
	\M^{\g}(\Sigma_{g,n}) = \{(A, \xi_1, \dots, \xi_n) \in \A^\flat(\Sigma_{g,n}) \times \su(2)^n \mid A|_{D_k^\ast} = \xi_k \, d\theta_k, k = 1, \dots, n\}/\G_{\bfi,0}(\Sigma_{g,n}).
\]
Jeffrey \cite{jeffrey-extended} shows that $\M^{\g}(\Sigma_{g,n})$ has an open smooth stratum, and on this stratum the equation (\ref{eqn:symp-form}) defines a symplectic form on tangent spaces
\[
	T_{[A, \xi_1, \dots, \xi_n]} \M^{\g}(\Sigma_{g,n}) = \frac{\ker(d_A: \Omega_{\g}^1(\Sigma_{g,n}; \su(2)) \longrightarrow \Omega_0^2(\Sigma_{g,n}; \su(2)))}{\im(d_A: \Omega_0^0(\Sigma_{g,n}; \su(2)) \longrightarrow \Omega_{\g}^1(\Sigma_{g,n}; \su(2)))},
\]
where
\[
	\Omega^1_{\g}(\Sigma_{g,n}; \su(2)) = \{a \in \Omega^1(\Sigma_{g,n}; \su(2)) \mid a|_{D_i^\ast} \equiv \xi_i \, d\theta_i, \xi_i \in \su(2), \text{ for } i = 1, \dots, n\}.
\]
The map
\[
	\Lambda: \M^{\g}(\Sigma_{g,n}) \longrightarrow \su(2)^n,
\]
\[
	[A, \xi_1, \dots, \xi_n] \mapsto (\xi_1, \dots, \xi_n)
\]
is also shown to be a moment map for the obvious action of $(G^{\ad})^n$ on $\M^{\g}(\Sigma_{g,n})$. Note in particular that if $\O_{\bfi/4}$ denotes the (co)adjoint orbit of $\tfrac{1}{4}\bfi$ in $\su(2)$, then $\Lambda^{-1}(\O_{\bfi/4} \times \cdots \times \O_{\bfi/4})$ fibers over $\scrF_{\bfi}(\Sigma_{g,n})$ with fiber $\O_{\bfi/4} \times \cdots \times \O_{\bfi/4}$, and furthermore
\[
	\scrR_{g,n} = \M_{\bfi}(\Sigma_{g,n}) = \Lambda^{-1}(\O_{\bfi/4} \times \cdots \times \O_{\bfi/4})/(G^{\ad})^n,
\]
so that $\scrR_{g,n}$ is a symplectic reduction of $\M^{\g}(\Sigma_{g,n})$ by a Hamiltonian $(G^{\ad})^n$-action.

$\M^{\g}(\Sigma_{g,n})$ is known to be monotone with monotonicity constant $\tfrac{1}{4}$ \cite[Theorem 3.4]{manolescu-woodward}, and furthermore the Kostant-Kirillov-Souriau symplectic form on the coadjoint orbit $\O_{\bfi/4}$ is monotone with monotonicity constant $\tfrac{1}{4}$ as well -- this is the special property of the conjugacy class of traceless elements of $\SU(2)$, which are the image of $\O_{\bfi/4}$ under $\exp(2\pi \bullet)$, that has led us to single them out in our constructions. Now, according to \cite[Lemma 4.5]{manolescu-woodward}, the cohomology class of the reduced symplectic form $\omega_{g,n}$ on $\scrR_{g,n}$ satisfies $[\omega_{g,n}] = \tfrac{1}{4}c_1(\scrR_{g,n})$, by plugging in $\kappa = \lambda = \tfrac{1}{4}$ into their formula. Therefore $\scrR_{g,n}$ is monotone with monotonicity constant $\tau = \tfrac{1}{4}$, as claimed.
\end{proof}

Monotonicity on its own isn't enough to ensure well-definedness of Floer homology, but it is a good start. What is further needed is some condition that ensures we have control over disk and sphere bubbles in the Gromov compactification of Maslov index $2$ pseudoholomorphic disks. The codimension of sphere bubbles is controlled by the first Chern class of $(M, \omega)$. In particular, if one defines the {\bf minimal Chern number} $N_M$ of $(M, \omega)$ to be the positive generator of the image of
\[
	c_1(TM): \pi_2(M) \longrightarrow \Z,
\]
where we consider $\pi_2(M) \subset H_2(M;\Z)$ via the Hurewicz map, we will be able to conclude that sphere bubbles in the Gromov compactification occur in codimension at least $2N_M$. If $L \subset M$ is a Lagrangian submanifold, then the {\bf minimal Maslov number} $N_L$ of $L$ is the positive generator of the image of the Maslov class
\[
	\mu_L: \pi_2(M,L) \longrightarrow \Z.
\]
$L$ is said to be {\bf monotone} if $2[\omega]|_{\pi_2(M,L)} = \kappa\mu_L$ for some $\kappa > 0$; $\kappa$ is necessarily the same as the monotonicity constant for $(M,\omega)$ in this case. When $L$ is simply connected (as will be the case in our application), then $L$ is automatically monotone (as long as $(M, \omega)$ is) and $N_L = 2N_M$. We will study disk bubbles for our Lagrangians later in Section \ref{sect:lagrangians}; for the rest of this section we focus on computing the minimal Chern number $N_{\scrR_{g,n}}$.

When $(M, \omega)$ is monotone with rational monotonicity constant (as is the case for $(\scrR_{g,n}, \omega_{g,n})$), it is clear that we can constrain the minimal Chern number if a suitable multiple of $[\omega]$ is an integral cohomology class. Therefore we establish the following:

\begin{prop}
$4[\omega_{g,n}] \in H^2(\scrR_{g,n}; \R)$ is a primitive integral cohomology class.
\end{prop}

\hide{
\begin{proof}
The idea is to generalize the fact that the Chern-Simons line bundle \cite{rsw} is a prequantum line bundle for the natural symplectic form on $\chi(\Sigma_g)$ to the case of the traceless character varieties $\scrR_{g,n}$. Andersen et al. \cite{andersen} give a definition of the Chern-Simons line bundle $\tilde{\L}_\text{CS} \longrightarrow \A^\flat(\Sigma_{g,n})$ on the space of flat $\SU(2)$-connections on the trivial bundle over the punctured surface $\Sigma_{g,n}$, and furthermore they give conditions on the conjugacy classes of the holonomies around the punctures that must be satisfied for the tensor power $\tilde{\L}_\text{CS}^{\otimes k}$ to descend to a line bundle $\L_\text{CS}^{\otimes k} \longrightarrow \A^\flat(\Sigma_{g,n})/\G(\Sigma_{g,n})$. 

It turns out that for traceless holonomies around each puncture (and an odd number of punctures), $\tilde{\L}_\text{CS}$ does not descend to $\scrR_{g,n}$, but $\tilde{\L}_\text{CS}^{\otimes 2}$ does by \cite[Theorem 3.6]{andersen}. Furthermore, $\L_\text{CS}^{\otimes 2}$ is a prequantum line bundle for $(\scrR_{g,n}, 2\omega_{g,n})$, which implies that $2[\omega_{g,n}]$ is an integral cohomology class.
\end{proof}
}

\begin{proof}
We show this directly, by evaluating $\omega_{g,n}$ on the generators $\beta, \gamma_1, \dots, \gamma_n$ of $\pi_2(\scrR_{g,n}) \cong H_2(\scrR_{g,n}; \Z)$ described in Section \ref{sect:homotopy-gps}.

First, note that $\langle \omega_{g,n}, \beta \rangle = 1$ by \cite[Section 5.4]{street} and the fact that $\beta$ is the Poincar\'e dual of $c_2(\mathbb{U})/[\Sigma_g]$ (Street uses Pontryagin classes instead of Chern classes, so our $\beta$ is $-4$ times the Poincar\'e dual of what he denotes $[\Sigma]$).

Now we evaluate $\langle \omega_{g,n}, \gamma_k\rangle$ ($k = 1, \dots, n$) by working in $\scrF_\bfi$. If $f_k: [0,1] \longrightarrow \T_{\bfi}^n \subset \G_{\bfi,0}$ is the path of gauge transformations (unique up to homotopy) such that $f_k(t) \equiv e^{\bfi \pi t}$ in a collar neighborhood of the $k^\text{th}$ boundary component of $\Sigma_{g,n}$ and is the identity outside of a slightly larger collar neighborhood, then as noted in Section \ref{sect:homotopy-gps}, there is a disk $\scrD_k$ in $\scrF_\bfi$ whose boundary is the loop $f_k(t)^{-1}A_0f_k(t)$ whose image in $\scrR_{g,n}$ is precisely the sphere $\gamma_k$. If we let $\mathpzc{a}: \D \longrightarrow \scrF_\bfi$ denote this disk, then we compute the following:
\begin{align*}
	\int_{\scrD_k} \omega_{g,n} & = \frac{1}{4\pi^2} \int_{\D} \int_{\Sigma_{g,n}} \Tr(d\mathpzc{a} \wedge d\mathpzc{a}) \\
	 & = \frac{1}{8\pi^2} \int_{\Sigma_{g,n}} \int_{\partial \D} \Tr(\mathpzc{a} \wedge d\mathpzc{a}) \\
	 & = \frac{1}{8\pi^2} \int_{\Sigma_{g,n}} \int_0^1 \Tr(f_k(t)^{-1}A_0f_k(t) \wedge d(f_k(t)^{-1}A_0f_k(t))) \, dt \\
	 & = \frac{1}{8\pi^2} \int_{\Sigma_{g,n}} \int_0^1 \Tr(f_k(t)^{-1}A_0f_k(t) \wedge [f_k(t)^{-1}A_0f_k(t),f_k(t)^{-1}f_k^\prime(t)]) \, dt \\
	 & = \frac{1}{8\pi^2} \int_{\Sigma_{g,n}} \int_0^1 \Tr(f_k^\prime(t)f_k(t)^{-1}[A_0, A_0]) \, dt \\
	 & = -\frac{1}{4\pi^2} \int_{\Sigma_{g,n}} \int_0^1 \Tr(f_k^\prime(t)f_k(t)^{-1}dA_0) \, dt \\
	 & = -\frac{1}{4\pi^2} \int_{\partial \Sigma_{g,n}} \int_0^1 \Tr(f_k^\prime(t)f_k(t)^{-1}A_0) \, dt \\
	 & = -\frac{1}{4\pi^2} \cdot 2\pi \int_0^1 2 \Re(\bfi \pi \cdot \tfrac{1}{4}\bfi) \,dt \\
	 & = \frac{1}{4}.
\end{align*}

It follows from the above computations that $\langle 4[\omega_{g,n}], \alpha\rangle \in \Z$ for any $\alpha \in H_2(\scrR_{g,n}; \Z)$, so that $4[\omega_{g,n}] \in H^2(\scrR_{g,n}; \Z) \subset H^2(\scrR_{g,n}; \R)$ is primitive.
\end{proof}

The above Proposition and the fact that $(\scrR_{g,n}, \omega_{g,n})$ is monotone with monotonicity constant $\tfrac{1}{4}$ immediately imply the following:

\begin{cor}
The minimal Chern number of $\scrR_{g,n}$ is $1$.
\label{cor:minchern}
\end{cor}
\section{Lagrangians and Whitney $n$-gons for Heegaard Splittings}

\label{sect:lagrangians}

The most commonly encountered type of handlebody decompositions of a $3$-manifold are Heegaard splittings. Since Heegaard splittings are easily visualized in terms of Heegaard diagrams and the definition of symplectic instanton homology greatly simplifies in this case, we make our initial definition of the symplectic instanton homology of a $3$-manifold $Y$ in terms of a Heegaard splitting $Y = H_0 \cup_{\Sigma_g} H_1$. We have included an exposition on the basic theory of Heegaard diagrams in Appendix \ref{sect:heegaard-diagrams} for the reader's convenience.

In this section, we fix a genus $g$ pointed Heegaard diagram $(\Sigma_g, \bfalpha, \bfbeta, \bfx)$. Here, $\bfx$ is a {\bf thick basepoint}, which means that it actually represents a small disk neighborhood with \emph{three} punctures in it (see Figure \ref{fig:3p-disk}). Hence, associated to $(\Sigma_g, \bfx)$ is the traceless character variety $\scrR_{g,3}$.

\begin{figure}[h]
	\centering
	\includegraphics[scale=1.25]{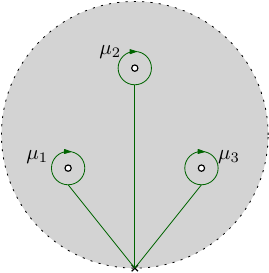}
	\caption{A thrice punctured disk with meridians $\mu_1$, $\mu_2$, and $\mu_3$ of the punctures pictured.}
	\label{fig:3p-disk}
\end{figure}

\subsection{Lagrangians from Heegaard Splittings}


To the set of attaching curves $\bfalpha = (\alpha_1, \dots, \alpha_g)$, we associate the set
\begin{align*}
	L_\alpha & = \{[\rho] \in \scrR_{g,3} : \rho([\alpha_1]) = \cdots = \rho([\alpha_g]) = I\} \subset \scrR_{g,3}.
\end{align*}
Note that $L_\alpha$ is well-defined independent of any choice of orientations of the curves $\alpha_k$, and it is furthermore independent of the choice of paths to the basepoint $x \in \Sigma_{g,3}$ needed to make each $\alpha_k$ into a based curve (\emph{i.e.} a representative of a class in $\pi_1(\Sigma_{g,3}, x)$).

A useful way to think about $L_\alpha$ is as follows: let $H_\alpha$ be the genus $g$ handlebody determined by the attaching curves $\bfalpha$. Thinking of the thrice-punctured surface $\Sigma_{g,3}$ as the boundary of $H_\alpha$, there is a standardly embedded ``tripod'' graph $\epsilon$ (see Figure \ref{fig:tripod}) in a neighborhood of $\partial H_\alpha$ whose intersection with $\partial H_\alpha$ corresponds to the three punctures in $\Sigma_{g,3}$. If we write $\mu_1$, $\mu_2$, and $\mu_3$ for the meridians of the three edges of $\epsilon$ in $\pi_1(H_\alpha \setminus \epsilon)$, we have the traceless character variety
\[
	\scrR(H_\alpha, \epsilon) = \{\rho \in \Hom(\pi_1(H_\alpha \setminus \epsilon), \SU(2)) \mid \Tr(\rho(\mu_k)) = 0\}/\text{conjugation}.
\]
It is then easy to see that $L_\alpha$ is the image of $\scrR(H_\alpha, \epsilon)$ in $\scrR_{g,3}$ under restriction to the boundary, using the fact that the product $\mu_1\mu_2\mu_3$ bounds an obvious disk in $H_\alpha$.

\begin{figure}[h]
	\centering
	\includegraphics[scale=1.25]{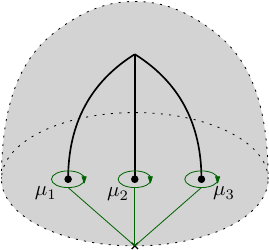}
	\caption{Local image of a ``tripod'' graph near the boundary of a handlebody, with meridians $\mu_1$, $\mu_2$, and $\mu_3$ labeled.}
	\label{fig:tripod}
\end{figure}

There is also a gauge theoretic description of $L_\alpha$ which is useful. Starting with the handlebody and graph pair $(H_\alpha, \epsilon)$ of the previous paragraph, remove a solid $3$-ball neighborhood of the $3$-valent vertex of $\epsilon$ from $H_\alpha$ to get a cobordism $W_\alpha: S^2 \longrightarrow \Sigma_g$ containing a trivial $3$-stranded tangle $T$. We may then consider the moduli space of flat $\SU(2)$-connections
\[
	\M(W_\alpha, T) = \{A \in \A(W_\alpha) \mid F_A = 0, \Hol_A(\mu_k) \in C(\bfi), k = 1,2,3\}/\G(W_\alpha).
\]
By restriction to the boundary, we get a map
\[
	\M(W_\alpha, T) \longrightarrow \M(S^2, \{3 \text{ pts}\}) \times \M(\Sigma_g, \{3 \text{ pts}\}) \cong \M(\Sigma_g, \{3 \text{ pts}\}).
\]
Under the holonomy correspondence, $\M(\Sigma_g, \{3 \text{ pts}\})$ is identified with $\scrR_{g,3}$, and the image of $\M(W_\alpha, T)$ in $\M(\Sigma_g, \{3 \text{ pts}\})$ is identified with $L_\alpha$.

\begin{prop}
$L_\alpha$ is an embedded Lagrangian submanifold of $\scrR_{g,3}$.
\end{prop}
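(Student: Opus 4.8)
The plan is to realize $L_\alpha$ as the traceless character variety of the handlebody $H_\alpha$ (with a trivial $3$-stranded tangle removed) restricted to the boundary surface, and then invoke the Lagrangian correspondence result from Section 3. Concretely, the attaching curves $\alpha_1, \dots, \alpha_g$ bound disks in $H_\alpha$, and $\pi_1(H_\alpha^\theta)$ is the quotient of $\pi_1(\Sigma_g^\theta)$ obtained by killing the classes $[\alpha_1], \dots, [\alpha_g]$ (together with the relations coming from the $3$-stranded tangle, which are already present). Hence a representation $[\rho] \in \M_{g,3}$ extends over $H_\alpha^\theta$ with traceless holonomy around the tangle strands if and only if $\rho([\alpha_j]) = I$ for all $j$, i.e. exactly when $[\rho] \in L_\alpha$. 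Viewing $H_\alpha^\theta$ as a bordism from the empty surface (or from $S^2$ with three marked points, $\M_{0,3}$, which is a point) to $\Sigma_g^\theta$, the set $L_\alpha$ is precisely the image $L(H_\alpha^\theta)$ under the restriction-to-boundary map. By the Proposition proved in Section 3 (that $L(Y_j^\theta)$ is a smooth Lagrangian submanifold of the relevant product of moduli spaces), $L(H_\alpha^\theta) \subset \M_{0,3}^- \times \M_{g,3} \cong \M_{g,3}$ is a smooth Lagrangian, which is the claim.

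First I would verify the fundamental group computation: by Seifert–Van Kampen, attaching a $2$-handle along $\alpha_j$ adds the relation $[\alpha_j] = 1$, and building $H_\alpha^\theta$ from $\Sigma_g^\theta \times [0,1]$ by attaching $g$ two-handles (along curves disjoint from the tangle region) and capping with a $3$-handle realizes $H_\alpha^\theta$ as an elementary-bordism composition of the type studied in Section 3; the meridians of the tangle map into $\pi_1(\Sigma_g^\theta)$ unchanged. Then I would identify the restriction map $\M(H_\alpha^\theta) \to \M_{g,3}$ set-theoretically with the inclusion of $L_\alpha$: surjectivity onto $L_\alpha$ because any $\rho$ killing the $[\alpha_j]$ factors through $\pi_1(H_\alpha^\theta)$, and injectivity because $\pi_1(\Sigma_g^\theta) \to \pi_1(H_\alpha^\theta)$ is surjective, so the extension is unique (no moduli in the $\M_{0,3}$ factor since $\M_{0,3}$ is a point). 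Finally I would cite the Section 3 Proposition to conclude smoothness and the Lagrangian property, noting that $\M_{0,3}^- \times \M_{g,3}$ is canonically $\M_{g,3}$.

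The main obstacle is making sure the handlebody $H_\alpha^\theta$ genuinely fits the framework of Section 3 — in particular, that removing the trivial $3$-stranded tangle from $H_\alpha$ (rather than from a closed $3$-manifold) still yields a bordism with the correct boundary strata, so that the Lagrangian-correspondence proposition applies verbatim. One must check that the tangle can be chosen to run along a gradient flow line meeting $\partial_+ H_\alpha^\theta = \Sigma_g^\theta$, and that the half-dimensionality count $\dim L_\alpha = \tfrac12 \dim \M_{g,3} = 3g-3+3 = 3g$ comes out right: indeed $L_\alpha$ should have dimension $3g$, matching $g$ copies of the codimension-$3$ conditions $\rho([\alpha_j]) = I$ imposed transversally on the $(6g-6+6)$-dimensional $\M_{g,3}$, and this is exactly the dimension of the traceless character variety of a genus-$0$ handlebody-with-tangle boundary-bundled over the $\alpha$-handlebody. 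An alternative, more self-contained route — if one prefers to avoid quoting Section 3 — is to repeat the Stokes'-theorem argument from that section directly for $H_\alpha^\theta$: the gauge-theoretic tangent space to $\M(H_\alpha^\theta)$ maps isomorphically to $T_{[A]}L_\alpha$, and Stokes' theorem shows $\omega$ vanishes on its image because the boundary integral over $\partial\nu T$ and over the capped-off surface components vanish. I expect the bundled-bordism bookkeeping, not the symplectic linear algebra, to be the only genuinely fiddly point.
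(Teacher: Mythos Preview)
Your proposal is correct but follows a different route than the paper. The paper gives a short, self-contained argument that avoids the Section~3 machinery altogether: for the \emph{standard} $\alpha$-curves one has $C_{\alpha_k}=\{A_k=I\}$ explicitly, these codimension-$3$ coisotropics are visibly pairwise transverse, so $L_\alpha=C_{\alpha_1}\cap\cdots\cap C_{\alpha_g}$ is smooth of dimension $6g-3g=3g$ and hence Lagrangian; the general case then follows because any $\bfalpha$ is obtained from the standard one by Dehn twists on $\Sigma_g$, which act by symplectomorphisms on $\M_{g,3}$. Your approach instead identifies $L_\alpha$ with $L(H_\alpha^\theta)\subset\M_{0,3}^-\times\M_{g,3}\cong\M_{g,3}$ and quotes the Lagrangian-correspondence proposition from Section~3. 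This is conceptually clean and explains \emph{why} $L_\alpha$ is Lagrangian (it comes from a bordism), but it is less elementary: you must either note that the Stokes argument there does not actually use elementarity of the bordism, or else factor $H_\alpha^\theta$ as a composition of $g$ elementary $2$-handle attachments and check the geometric compositions are embedded (which the paper only does later, in Section~5.1). The paper's direct transversality-plus-Dehn-twist argument sidesteps all of that bookkeeping.
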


\begin{proof}
In the case where $\bfalpha$ is the ``standard'' set of $\alpha$-curves (see Figure \ref{fig:standard-alpha} in Appendix \ref{sect:heegaard-diagrams}), we have
\[
	L_\alpha = \{[A_1, \dots, A_g, B_1, \dots, B_g, C_1, C_2, C_3] \in \scrR_{g,3} : A_k = I \text{ for } k = 1,\dots,g\},
\]
and it is clear that this standard $L_\alpha$ is a smooth, embedded submanifold of $\scrR_{g,3}$.

To see that the ``standard'' $L_\alpha$ is Lagrangian in $\scrR_{g,3}$, we use the gauge theoretic description of the moduli spaces and symplectic form. $L_\alpha \subset \scrR_{g,3}$ consists of gauge equivalence classes of $\SU(2)$-connections $[A]$ which extend to an $\SU(2)$-connection $[\tilde{A}]$ on $H_\alpha \setminus \nu T$ that has traceless holonomy around any of the edges of the tripod graph $T \subset H_\alpha$. Here $\nu T$ denotes a \emph{closed} regular neighborhood of $T$, so that $H_\alpha \setminus \nu T$ is noncompact.

The tangent space $T_{[A]} \scrR_{g,3}$ may be identified with
\[
	\im(H^1_c(\Sigma_{g,3}; \su(2)_A) \longrightarrow H^1(\Sigma_{g,3}; \su(2)_A))
\]
and the symplectic form $\omega$ on $T_{[A]} \scrR_{g,3}$ is given by
\[
	\omega_{[A]}(\xi, \eta) = \frac{1}{4\pi^2}\int_{\Sigma_{g,3}} \Tr (\xi \wedge \eta).
\]
Similarly, we have
\[
	T_{[\tilde{A}]} \scrR(H_\alpha, T) = \im(H^1_c(H_\alpha \setminus \nu T; \su(2)_{\tilde{A}}) \longrightarrow H^1(H_\alpha \setminus \nu T; \su(2)_{\tilde{A}})).
\]

Let $\tilde{\xi}$, $\tilde{\eta}$ be two tangent vectors to $\scrR(H_\alpha, T)$ at a point $[\tilde{A}]$, and denote their images in $L_\alpha$ by $\xi, \eta \in T_{[A]} \scrR_{g,3}$, where $[A]$ is the restriction of $[\tilde{A}]$ to $\Sigma_{g,3}$. By Stokes' theorem,
\begin{align*}
	0 & = \int_{H_\alpha \setminus \nu T} d\langle \tilde{\xi} \wedge \tilde{\eta} \rangle \\
	 & = \int_{\Sigma_{g,3}} \langle \xi \wedge \eta \rangle + \int_{\partial \nu T} \langle \tilde{\xi} \wedge \tilde{\eta} \rangle \\
	 & = 2\pi\omega_{([A])}(\xi, \eta) + \int_{\partial \nu T} \langle \tilde{\xi} \wedge \tilde{\eta} \rangle.
\end{align*}
The last term on the right is zero, since the forms $\xi$, $\eta$ are compactly supported in $H_\alpha \setminus \nu T$. Therefore $L_\alpha$ is isotropic in $\scrR_{g,3}$. It is also clearly half-dimensional, so that it is a Lagrangian.

For the general case where $\bfalpha$ is not necessarily standard, notice that up to isotopy, any set of attaching curves can be obtained from the standard one by a mapping class $\phi \in \MCG(\Sigma_{g,3})$. Since $\MCG(\Sigma_{g,3})$ acts on $\scrR_{g,3}$ by symplectomorphisms, it follows that $L_\alpha$ Lagrangian for any $g$-tuple of attaching curves $\bfalpha$.
\end{proof}

We can determine what manifold $L_\alpha$ is a Lagrangian embedding of:

\begin{thm}
For any $g$-tuple of attaching curves $\boldsymbol{\alpha}$, $L_\alpha \cong (S^3)^g$.
\label{thm:lagrangianS3}
\end{thm}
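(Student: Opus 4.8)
\emph{Proof proposal.} The plan is to reduce to the case of the standard $\bfalpha$-curves and then read off $L_\alpha$ directly from the holonomy description of $\M_{g,3}$, identifying it with a quotient of a homogeneous space.

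First I would reduce to the standard case. Any $g$-tuple of attaching curves is a cut system of $\Sigma_g$, and any two cut systems are related, up to isotopy, by a diffeomorphism of $(\Sigma_g,\bfx)$ which is a product of Dehn twists supported away from the thick basepoint $\bfx$. Exactly as in the proof that $L_\alpha$ is Lagrangian, such a diffeomorphism induces a symplectomorphism of $\M_{g,3}$, and since $C_\gamma$ depends only on the unoriented curve $\gamma$ and is natural under such diffeomorphisms, this symplectomorphism carries $L_\alpha$ for the standard curves onto $L_\alpha$ for the given curves. Hence it suffices to prove the statement when $\bfalpha$ is standard.

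For the standard curves, imposing $A_1=\cdots=A_g=I$ in the holonomy description makes $\prod_j[A_j,B_j]=I$, so the surface relation degenerates to $C_1C_2C_3=I$ while the $B_j$ become unconstrained. Writing $N=\{(C_1,C_2,C_3)\in\mathcal{C}_{1/4}^3 : C_1C_2C_3=I\}$, we get $L_\alpha=\bigl(\SU(2)^g\times N\bigr)/\SU(2)$ with $\SU(2)$ acting by simultaneous conjugation. Now $N/\SU(2)$ is precisely $\M_{0,3}$, which is a single point, so $N$ is one conjugation orbit. Moreover every triple in $N$ generates an irreducible subgroup of $\SU(2)$: a reducible one would be simultaneously diagonalizable, forcing each $C_k=\diag(e^{\pm i\pi/2},e^{\mp i\pi/2})$, whence $C_1C_2C_3$ is diagonal with top entry a product of three elements of $\{i,-i\}$, which is never $1$. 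Therefore the stabilizer of any point of $N$ is the center $\{\pm I\}$, and $N\cong\SU(2)/\{\pm I\}\cong\SO(3)$ as $\SU(2)$-spaces (with the residual $\SO(3)$-action on $N$ free and transitive).

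Finally I would invoke the standard identification $(X\times G/H)/G\cong X/H$ with $G=\SU(2)$, $X=\SU(2)^g$ carrying the conjugation action, and $H=\{\pm I\}$: this yields $L_\alpha\cong\SU(2)^g/\{\pm I\}$ where $\{\pm I\}$ acts by conjugation, i.e.\ trivially, so $L_\alpha\cong\SU(2)^g\cong(S^3)^g$. Concretely the diffeomorphism is the slice map: fixing $n_0\in N$, every conjugation orbit in $\SU(2)^g\times N$ meets $\SU(2)^g\times\{n_0\}$ in exactly one point (the two lifts $\pm g$ of the unique rotation carrying a given $n$ to $n_0$ act identically on the $\SU(2)^g$ factor), and freeness of the $\SO(3)$-action makes this a diffeomorphism onto $L_\alpha$. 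The only real content is the stabilizer computation for $N$ in the third paragraph; the reduction and the final quotient identity are formal.
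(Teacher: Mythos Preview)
Your proposal is correct and follows essentially the same route as the paper: reduce to the standard curves via Dehn twists, observe that the triple $(C_1,C_2,C_3)$ is determined up to conjugation with stabilizer the center $\{\pm I\}$, and then read off $L_\alpha\cong\SU(2)^g$ via the slice map. The only cosmetic difference is that the paper works with the explicit representative $(\bfi,\bfj,-\bfk)$ and writes down the bijection by hand, whereas you phrase the same computation through the quotient identity $(X\times G/H)/G\cong X/H$; your irreducibility/parity argument for the stabilizer is a slightly more detailed version of what the paper asserts directly.
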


\begin{proof}
Again, we first consider the case where the $\alpha$-curves are standard. Then we may make the identification
\[
	L_\alpha = \{B_1, \dots, B_g, C_1, C_2, C_3 \in \SU(2): C_1C_2C_3 = 1, \Tr(C_k) = 0\}/\text{conjugation}.
\]

Note that if the orientations of the three punctures are chosen appropriately, then up to conjugation $(C_1, C_2, C_3) = (\mathbf{i}, \mathbf{j}, -\mathbf{k})$ is the only ordered triple of traceless $\SU(2)$-matrices such that $C_1 C_2 C_3 = 1$ (see Example \ref{exmp:Rgn}). We claim that the map
\[
	(S^3)^g \longrightarrow L_\alpha: (B_1, \dots, B_g) \mapsto [B_1, \dots, B_g, \mathbf{i}, \mathbf{j}, -\mathbf{k}]
\]
is a diffeomorphism. Injectivity is clear, since the common stabilizer of $\mathbf{i}$, $\mathbf{j}$, and $-\mathbf{k}$ is the center $\{\pm 1\}$ of $\SU(2)$. For surjectivity, let $[B_1, \dots, B_g, C_1, C_2, C_3] \in L_\alpha$ be arbitrary. There is some $D \in \SU(2)/\{\pm I\} = \SO(3)$ such that $D(C_1, C_2, C_3)D^{-1} = (\mathbf{i}, \mathbf{j}, -\mathbf{k})$, so that
\[
	[B_1, \dots, B_g, C_1, C_2, C_3] = [DB_1 D^{-1}, \dots, DB_g D^{-1}, \mathbf{i}, \mathbf{j}, -\mathbf{k}],
\]
and therefore $[B_1, \dots, B_g, C_1, C_2, C_3]$ lies in the image of our map. Furthermore, the choice of $D$ can be made so that it varies smoothly with $(C_1, C_2, C_3)$ (depending only on the angles between $C_1, C_2, C_3$ and $\mathbf{i}, \mathbf{j}, -\mathbf{k}$, respectively). It follows that $L_\alpha$ is diffeomorphic to $(S^3)^g$ when $\bfalpha$ is the standard set of $\alpha$-curves.

Up to isotopy, nonstandard $\alpha$-curves may be obtained as the image of the standard ones under a mapping class $\phi \in \MCG(\Sigma_{g,n})$, and mapping classes induce symplectomorphisms on $\scrR_{g,3}$. Hence it follows that $L_\alpha$ is diffeomorphic to $(S^3)^g$ for any $g$-tuple of attaching curves $\bfalpha$.
\end{proof}

The proof of Theorem \ref{thm:lagrangianS3} additionally allows us to explicitly identify the intersection $L_\alpha \cap L_\beta$.

\begin{thm}
\label{thm:lagrangianintersection}
For any two sets of attaching curves $\bfalpha$ and $\bfbeta$, we have that
\[
	L_\alpha \cap L_\beta \cong \Hom(\pi_1(Y), \SU(2)).
\]
\end{thm}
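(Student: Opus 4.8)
The plan is to combine the two descriptions of the Lagrangians established in the previous sections. Recall that $L_\alpha$ is cut out inside $\M_{g,3}$ by the equations $\rho([\alpha_1]) = \cdots = \rho([\alpha_g]) = I$, and likewise $L_\beta$ by $\rho([\beta_1]) = \cdots = \rho([\beta_g]) = I$. Thus a point of $L_\alpha \cap L_\beta$ is a conjugacy class of representations $\rho \in \Hom(\pi_1(\Sigma_g \setminus \underline{x}), \SU(2))$ sending each meridian $\gamma_k$ to $\mathcal{C}_{1/4}$ and killing all of the $\alpha_j$ and $\beta_j$. First I would invoke the Seifert--Van Kampen theorem: $\pi_1(Y)$ is the quotient of $\pi_1(\Sigma_g)$ by the normal subgroup generated by the $\alpha_j$ and $\beta_j$, and since $Y$ is obtained from the punctured-surface picture by filling in the three punctures (the meridians $\gamma_k$ bound disks in the handlebodies once the $\theta$-graph is glued back), a representation of $\pi_1(\Sigma_g \setminus \underline{x})$ killing the $\alpha_j, \beta_j$ that also sends the $\gamma_k$ into $\mathcal{C}_{1/4}$ is the same as a representation of $\pi_1(Y)$ together with a choice of where the (now redundant) $\gamma_k$ go.

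The key point, which I would extract from the proof of Theorem~\ref{thm:lagrangianS3}, is that this extra data is rigid. Working in the standard coordinates, after killing the $\alpha$-curves and $\beta$-curves the surface relation $\prod [A_j,B_j] = C_1 C_2 C_3$ collapses to $C_1 C_2 C_3 = 1$, and the proof of Theorem~\ref{thm:lagrangianS3} shows that the only triple in $\mathcal{C}_{1/4}^3$ with product $1$ is $(\mathbf{i}, \mathbf{j}, -\mathbf{k})$ up to conjugation. Hence on $L_\alpha \cap L_\beta$ the values $C_1, C_2, C_3$ are determined up to simultaneous conjugation, and their common stabilizer in $\SO(3) = \SU(2)/\{\pm 1\}$ is trivial. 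So the conjugation action is already rigidified by the $C_k$; the only remaining freedom is the values of $\rho$ on $\pi_1(Y)$. For a general (non-standard) pair $\bfalpha, \bfbeta$ one reduces to the standard case by the same Dehn-twist argument used in the previous theorems: a sequence of Dehn twists on $\Sigma_g$ carries the standard diagram to the given one, induces a symplectomorphism of $\M_{g,3}$, and hence a diffeomorphism $L_\alpha \cap L_\beta \cong L_{\alpha_0} \cap L_{\beta_0}$, while on the representation-variety side it simply relabels generators of $\pi_1(Y)$.

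Concretely, I would define the map $L_\alpha \cap L_\beta \to \Hom(\pi_1(Y), \SU(2))$ by sending a conjugacy class $[\rho]$ to the genuine representation obtained by first conjugating the chosen representative so that $(C_1, C_2, C_3) = (\mathbf{i}, \mathbf{j}, -\mathbf{k})$ (possible and unique by the above) and then restricting to $\pi_1(Y)$; the inverse sends a representation $\psi: \pi_1(Y) \to \SU(2)$ to the conjugacy class $[\psi(B_1), \dots, \psi(B_g), \mathbf{i}, \mathbf{j}, -\mathbf{k}]$ in the standard model. Both maps are manifestly continuous (indeed algebraic), and mutually inverse, giving the claimed identification. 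I would also remark that this is a homeomorphism/diffeomorphism onto the \emph{whole} representation variety $\Hom(\pi_1(Y), \SU(2))$, not its character variety, precisely because the marked points have broken the conjugation symmetry.

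I expect the main obstacle to be the bookkeeping around the punctures: one must be careful that filling in the three punctures of $\Sigma_g^\theta$ does \emph{not} impose the relations $\gamma_k = 1$ (they are sent to traceless matrices, not the identity), so the naive Van Kampen computation for the closed surface must be modified, and one has to argue cleanly that after killing the $\alpha$'s and $\beta$'s the data carried by the $\gamma_k$ contributes nothing to $\Hom$ beyond pinning down the conjugation action. Once that rigidity statement (essentially the uniqueness of the triple $(\mathbf{i},\mathbf{j},-\mathbf{k})$ from Theorem~\ref{thm:lagrangianS3}) is in hand, the rest is formal.
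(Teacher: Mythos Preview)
Your proposal is correct and follows the same route as the paper: use the rigidity from Theorem~\ref{thm:lagrangianS3} (the triple $(C_1,C_2,C_3)$ is forced to be conjugate to $(\bfi,\bfj,-\bfk)$, with trivial residual stabilizer) to pass from conjugacy classes in $\M_{g,3}$ to honest representations, and then apply Seifert--Van Kampen to identify the remaining data with $\Hom(\pi_1(Y),\SU(2))$. The paper packages this by first identifying each of $L_\alpha$, $L_\beta$ with the unreduced spaces $\Hom(\pi_1(H_\alpha),\SU(2))$, $\Hom(\pi_1(H_\beta),\SU(2))$ sitting inside $\Hom(\pi_1(\Sigma_g),\SU(2))$, but the content is identical to yours.

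One minor wrinkle: your Dehn-twist ``reduction to the standard case'' is not quite right as phrased, since no surface diffeomorphism will simultaneously standardize both $\bfalpha$ and $\bfbeta$ for a general $Y$. Fortunately this reduction is unnecessary: the fact that $L_\alpha$ (and likewise $L_\beta$) already lies in the locus $\{C_1C_2C_3 = I\} \cong \Hom(\pi_1(\Sigma_g),\SU(2))$ follows from Theorem~\ref{thm:lagrangianS3} applied to each handlebody individually, after which your direct construction of the bijection goes through for arbitrary attaching curves without further reduction.
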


\begin{proof}
Let $H_\alpha$ and $H_\beta$ denote the $\alpha$- and $\beta$-handlebodies, respectively. Then the proof of Theorem \ref{thm:lagrangianS3} shows that
\[
	L_\alpha = \Hom(\pi_1(H_\alpha), \SU(2)), \quad\quad L_\beta = \Hom(\pi_1(H_\beta), \SU(2)),
\]
where we do not mod out by the action of conjugation. Furthermore, the proof of Theorem \ref{thm:lagrangianS3} also shows that
\begin{align}
	\label{eqn:zero-level}
	\mathscr{E} = \left\{[A_1, B_1, \dots, A_g, B_g, C_1, C_2, C_3] \in \scrR_{g,3} ~\left|~ \prod_{k = 1}^g [A_k, B_k] = I\right\}\right. & \cong \Hom(\pi_1(\Sigma_g), \SU(2)),
\end{align}
and $L_\alpha$, $L_\beta$ always lie entirely inside this subset of $\scrR_{g,3}$. Therefore by the Seifert-Van Kampen theorem an intersection point of $L_\alpha$ and $L_\beta$ corresponds to an element of $\Hom(\pi_1(Y), \SU(2))$.
\end{proof}

Because of Theorem \ref{thm:lagrangianintersection}, we see that a Hamiltonian perturbation will almost always be required to achieve transversality for the Lagrangians. In forthcoming work \cite{horton-holonomy}, we show that holonomy perturbations, familiar from gauge theory, can be used to induce Hamiltonian isotopies of the Lagrangians through perturbed character varieties, so that perturbations can preserve the interpretation of $L_\alpha \cap L_\beta$ as a space of (perturbed) $\SU(2)$-representations of $\pi_1(Y)$.

\hide{On the other hand, since $\Hom(\pi_1(Y), \SU(2))$ can be perturbed to a disjoint union of an isolated point, $S^2$'s, and $\R P^3$'s (via holonomy perturbations in $\Hom(\pi_1(Y), \SU(2))/\text{conj.}$), so it may be possible to consider a Morse-Bott approach in working with the Floer homology of $L_\alpha$ and $L_\beta$.}

There is an extension of the identification (\ref{eqn:zero-level}) which is useful. Write
\[
	R^\ast = \Hom(\pi_1(\Sigma_g \setminus \{\text{pt}\}), \SU(2)) \cong \SU(2)^{2g}.
\]
Certainly we have
\[
	E = \left\{[A_1, B_1, \dots, A_g, B_g, C_1, C_2, C_3] \in \scrR_{g,3} ~\left|~ \prod_{k = 1}^g [A_k, B_k] = I\right\}\right. \subset R^\ast.
\]
The desired strengthening of (\ref{eqn:zero-level}) is that the subspaces $\mathscr{E} \subset \scrR_{g,3}$ and $E \subset R^\ast$ are neighborhood equivalent:

\begin{thm}
\label{thm:zero-level-nbhd}
There is an open neighborhood $U$ of $E$ in $R^\ast$, an open neighborhood $\mathscr{U}$ of $\mathscr{E}$ in $\scrR_{g,3}$, and a diffeomorphism $h: U \longrightarrow \mathscr{U}$ such that $h|_E: E \longrightarrow \mathscr{E}$ is the identification (\ref{eqn:zero-level}).
\end{thm}

\begin{proof}
\hide{The $\SU(2)$-representation spaces $\Hom(\pi_1(H_\alpha),\SU(2)) \cong L_\alpha$ and $\Hom(\pi_1(H_\beta),\SU(2)) \cong L_\beta$ embed into $R^\ast = \Hom(\pi_1(\Sigma_g \setminus \{\text{pt}\}), \SU(2))$. We will therefore establish the desired result by proving that there is an open neighborhood of
\[
	\mathscr{E} = \left\{[A_1, B_1, \dots, A_g, B_g, C_1, C_2, C_3] \in \scrR_{g,3} : \prod_{k = 1}^g [A_k,B_k] = I = C_1 C_2 C_3\right\} \subset \scrR_{g,3},
\]
which contains all possible Lagrangians coming from handlebodies by the proof of Theorem \ref{thm:lagrangianintersection}, that is homeomorphic to an open neighborhood of
\[
	E = \left\{A_1, B_1, \dots, A_g, B_g \in \SU(2) : \prod_{k = 1}^g [A_k,B_k] = I\right\} \subset R^\ast
\]
Indeed, w}Write
\[
	\mathscr{U}_t = \left\{[A_1, B_1, \dots, A_g, B_g, C_1, C_2, C_3] \in \scrR_{g,3} : \prod_{k = 1}^g [A_k,B_k] = C_1 C_2 C_3, \Tr(C_1 C_2 C_3) = t \right\},
\]
and consider the open neighborhood
\[
	\mathscr{U} = \bigcup_{t \in (1,2]} \mathscr{U}_t
\]
of $\mathscr{E}$ in $\scrR_{g,3}$. Note that $\mathscr{U}_2 = \mathscr{E}$. We have analogous subsets
\[
	U_t = \left\{A_1, B_1, \dots, A_g, B_g \in \SU(2) : \Tr \left(\prod_{k = 1}^g [A_k,B_k]\right) = t\right\}
\]
of $R^\ast$ with $U_2 = E$. For each $t \in (1, 2]$, we define a map
\[
	h_t: U_t \longrightarrow \mathscr{U}_t,
\]
\[
	h_t(A_1, B_1, \dots, A_g, B_g) = \left[A_1, B_1, \dots, A_g, B_g, \bfi, e^{2\pi\gamma \bfk}\bfi, -e^{2\pi\gamma \bfk} \prod_{k = 1}^g [A_k, B_k]\right],
\]
where $\gamma = \gamma(A_1, \dots, B_g)$ is given by
\[
	\gamma(A_1, \dots, B_g) = \begin{cases} \displaystyle \frac{1}{2\pi} \arctan \left( \frac{\cot \tfrac{\pi}{4}t}{Q_{\bfk}}\right), & \displaystyle \text{if } \prod_{k = 1}^g [A_k, B_k] \neq I, \\ \dfrac{1}{2}, & \text{otherwise}; \end{cases}
\]
here $Q_\bfk$ denotes the $\bfk$-component of $\displaystyle \prod_{k = 1}^g [A_k, B_k]$. The induced map
\[
	h : \bigcup_{t \in (1,2]} U_t \longrightarrow \bigcup_{t \in (1,2]} \mathscr{U}_t
\]
is easily seen to be a diffeomorphism.\hide{ Since $h$ is a homeomorphism of open neighborhoods of $E$ and $\mathscr{E}$ identifying the respective embeddings of $L_\alpha$ and $L_\beta$, we conclude that that $(L_\alpha \cdot L_\beta)_{\scrR_{g,3}} = (L_\alpha \cdot L_\beta)_{R^\ast}$.}
\end{proof}

Since the trivial representation is always isolated and transverse in $\Hom(\pi_1(Y), \SU(2))$ when $Y$ is a rational homology sphere \cite[Proposition III.1.1(c)]{akbulut-mccarthy}, we immediately obtain the following:

\begin{cor}
The trivial representation $\theta = [I, I, \dots, I, I, \bfi, \bfj, -\bfk] \in \scrR_{g,3}$ is always a transverse intersection point of $L_\alpha$ and $L_\beta$, for any sets of attaching circles $\bfalpha$ and $\bfbeta$. When $Y$ is a rational homology sphere, $\theta$ is a transverse intersection point.
\label{cor:trivialint}
\end{cor}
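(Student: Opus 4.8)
The first assertion is immediate from the results above. By Theorem~\ref{thm:lagrangianintersection} we have $L_\alpha \cap L_\beta \cong \Hom(\pi_1(Y),\SU(2))$, which always contains the trivial representation; chasing this point through the identification set up in the proof of Theorem~\ref{thm:lagrangianS3}, the trivial representation of $\pi_1(Y)$ is the point all of whose $A_k,B_k$ equal $I$, and then $C_1C_2C_3=\prod_k[A_k,B_k]=I$ forces $(C_1,C_2,C_3)=(\bfi,\bfj,-\bfk)$ up to conjugation, i.e.\ the point $\theta$. Independently, one checks directly that $\theta\in\M_{g,3}$ (we have $\bfi\,\bfj\,(-\bfk)=I$ and each of $\bfi,\bfj,-\bfk$ is traceless) and that the representation underlying $\theta$ sends every $[\alpha_k]$ and every $[\beta_k]$ to $I$ no matter what the attaching curves are, so $\theta\in L_\alpha\cap L_\beta$ for all $\bfalpha,\bfbeta$.

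For the transversality claim, since $\dim L_\alpha+\dim L_\beta=3g+3g=6g=\dim\M_{g,3}$, it suffices to prove $T_\theta L_\alpha\cap T_\theta L_\beta=0$, and the plan is to identify this intersection with a twisted cohomology group of $Y$ that vanishes exactly when $b_1(Y)=0$. First I would compute $T_\theta\M_{g,3}$ in the holonomy description: a tangent vector there is an infinitesimal deformation $(\dot A_k,\dot B_k,\dot C_i)$ of $(I,\dots,I,\bfi,\bfj,-\bfk)$ constrained by the linearization of $\prod_k[A_k,B_k]=C_1C_2C_3$; because the commutator map has vanishing differential at the identity this linearization reads simply $d(C_1C_2C_3)(\dot C)=0$, and $d(C_1C_2C_3)$ at $(\bfi,\bfj,-\bfk)$ is onto $\su(2)$ with $3$-dimensional kernel equal to the tangent space of the conjugation orbit. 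Hence passing to the conjugation quotient kills the $C$-directions and yields $T_\theta\M_{g,3}\cong\su(2)^{2g}=\Hom(H_1(\Sigma_g;\Z),\su(2))=H^1(\Sigma_g;\R)\otimes\su(2)$, with untwisted coefficients since the representation underlying $\theta$ is trivial on $\pi_1(\Sigma_g)$. Under this identification $T_\theta L_\alpha$ is the annihilator of the span $V_\alpha\subset H_1(\Sigma_g;\Z)$ of the $\alpha$-curve classes, tensored with $\su(2)$ — clear for the standard $\bfalpha$ from the explicit parametrization $L_\alpha\cong(S^3)^g$ in the proof of Theorem~\ref{thm:lagrangianS3}, and in general because nonstandard curves are reached by Dehn twists, which fix $\theta$ and act on $T_\theta\M_{g,3}$ via the symplectic representation on $H^1(\Sigma_g;\R)$ — and likewise $T_\theta L_\beta=\operatorname{Ann}(V_\beta)\otimes\su(2)$. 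Therefore
\[
	T_\theta L_\alpha\cap T_\theta L_\beta=\big(\operatorname{Ann}(V_\alpha+V_\beta)\big)\otimes\su(2)\cong\Hom\big(H_1(Y;\Z),\su(2)\big)=H^1(Y;\R)\otimes\su(2),
\]
using $H_1(Y;\Z)=H_1(\Sigma_g;\Z)/(V_\alpha+V_\beta)$. When $Y$ is a rational homology sphere this group is $0$, so $\theta$ is a transverse intersection point. (Equivalently, via the Seifert--Van Kampen argument of Theorem~\ref{thm:lagrangianintersection} one identifies $T_\theta L_\alpha\cap T_\theta L_\beta$ with the Zariski tangent space $Z^1(\pi_1(Y);\su(2))\cong H^1(Y;\su(2))$ of the representation scheme at the trivial representation, which vanishes for a rational homology sphere by \cite[Proposition III.1.1(c)]{akbulut-mccarthy}.)

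The first assertion is entirely routine. In the second, the one point that needs care is the tangent-space bookkeeping at $\theta$ — verifying that the marked-point coordinates $C_i$ together with the conjugation action cancel precisely at $\theta$, so that $T_\theta\M_{g,3}$ really is the untwisted $H^1(\Sigma_g;\su(2))$ and the two Lagrangian tangent spaces are genuine annihilators of $V_\alpha,V_\beta\subset H_1(\Sigma_g)$. That is the step I expect to take the most care to write cleanly; everything after it is linear algebra together with the cited acyclicity of the trivial connection on a rational homology sphere.
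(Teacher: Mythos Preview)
Your argument is correct and is essentially the same approach the paper has in mind, only made fully explicit. The paper's ``proof'' is the single sentence preceding the corollary: it invokes \cite[Proposition III.1.1(c)]{akbulut-mccarthy} that the trivial representation is isolated in $\Hom(\pi_1(Y),\SU(2))$ for a rational homology sphere, and declares the corollary immediate from Theorem~\ref{thm:lagrangianintersection}. Your write-up unpacks exactly why that citation suffices: you compute $T_\theta\M_{g,3}\cong H^1(\Sigma_g;\su(2))$, identify $T_\theta L_\alpha$ and $T_\theta L_\beta$ as annihilators of the handlebody kernels, and obtain $T_\theta L_\alpha\cap T_\theta L_\beta\cong H^1(Y;\su(2))$, which is the Zariski tangent space at the trivial representation and vanishes precisely when $b_1(Y)=0$. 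This is the content behind the Akbulut--McCarthy citation, so the two proofs coincide; yours simply does not outsource the tangent-space bookkeeping. Your parenthetical alternative via $Z^1(\pi_1(Y);\su(2))$ is in fact closest in spirit to what the paper is invoking.
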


\hide{\todo{Can holonomy perturbations be chosen in such a way that the trivial representation is a transverse perturbed intersection point for arbitrary $3$-manifolds?}}

\subsection{Whitney $n$-gons}

Since our goal is to study pseudoholomorphic disks with boundaries on Lagrangians of the form $L_\alpha$, we should start by understanding the topology of the relevant spaces of maps.

Suppose we have $n$ sets of attaching curves $\bfalpha_1, \dots, \bfalpha_n$ and furthermore suppose that the associated Lagrangians $L_{\alpha_1}, \dots, L_{\alpha_n}$ intersect transversely (perhaps after a Hamiltonian perturbation). Let $\D_n$ denote the closed unit disk in $\C$ with markings $z_k = \exp(2\pi k i/n)$ at the $n^\text{th}$ roots of unity. Starting from $1$ and moving clockwise, denote the connected components of $\D_n \setminus \{z_1, \dots, z_n\}$ by $A_k$, $k = 1, \dots, n$. Finally, suppose we have $n$ intersection points $y_k \in L_{\alpha_k} \cap L_{\alpha_{k+1}}$, where we have $L_{\alpha_{n+1}} = L_{\alpha_1}$. Then a {\bf Whitney $n$-gon} for $(y_1, \dots, y_n)$ is a continuous map $\phi: \D_n \longrightarrow \scrR_{g,3}$ such that
\[
	\phi(z_k) = y_k, \quad  \phi(A_k) \subset L_{\alpha_k} \text{ for each } k = 1, \dots, n.
\]
We will write $\Whit(y_1, \dots, y_n)$ for the space of all Whitney $n$-gons for $(y_1, \dots, y_n)$ and $\pi_2(y_1, \dots, y_n)$ for the set of all homotopy classes of Whitney $n$-gons for $(y_1, \dots, y_n)$.

It turns out that we can determine the homotopy classes of Whitney $n$-gons for any $n$-tuple of intersection points.

\begin{lem}
For any transverse $n$-tuple of Lagrangians $L_{\alpha_1}, \dots, L_{\alpha_n}$ in $\scrR_{g,3}$ associated to attaching sets $\bfalpha_1, \dots, \bfalpha_n$ and distinct intersection points $y_k \in L_{\alpha_k} \cap L_{\alpha_{k+1}}$, $\pi_2(y_1, \dots, y_n) \cong \Z^4$.
\label{lem:pi2xy}
\end{lem}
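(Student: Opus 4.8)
The plan is to reduce the statement to a combination of the homotopy-group information already available for $\M_{g,3}$ (Proposition \ref{homotopyModuli}, which gives $\pi_1(\M_{g,3}) = 0$ and $\pi_2(\M_{g,3}) \cong \Z^4$ when $g > 0$) together with the fact that each Lagrangian $L_{\alpha_k} \cong (S^3)^g$ is $2$-connected (Theorem \ref{thm:lagrangianS3}). The key point is that a Whitney $n$-gon is a map out of a disk with boundary conditions on the $L_{\alpha_k}$, so obstruction theory / a Mayer--Vietoris-type argument for relative homotopy should let me peel off the boundary conditions one edge at a time and be left with a genuine sphere class in $\M_{g,3}$.

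More concretely, first I would set up the relevant space: fix the base Whitney $n$-gon (one exists, e.g. a constant-ish path using that $\M_{g,3}$ is connected and the $L_{\alpha_k}$ meet), and note that $\pi_2(y_1,\dots,y_n)$ is a torsor over the group of homotopy classes of maps $(\D_n, \partial) \to (\M_{g,3}, \bigcup L_{\alpha_k})$ relative to the marked points, which is what I actually need to compute. The disk $\D_n$ with its $n$ boundary arcs $A_k$ is contractible rel nothing, but rel the marked points $z_k$ it is homotopy equivalent, after collapsing, to a wedge picture: the standard trick is that a Whitney $n$-gon is the same data as a nullhomotopy of a loop in $\M_{g,3}$ built by concatenating $n$ paths, each constrained to lie in $L_{\alpha_k}$. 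Since each $L_{\alpha_k}$ is path-connected and $\pi_1(\M_{g,3}) = 0$, the space of Whitney $n$-gons for a fixed $(y_1,\dots,y_n)$ is nonempty; and since each $L_{\alpha_k}$ is simply connected, the difference of two $n$-gons lying along a common edge can be pushed off that edge (homotoping the map into the interior along the $2$-connected Lagrangian), so inductively any homotopy class is represented by a map that is constant near $\partial \D_n$, i.e. factors through $\D_n/\partial\D_n = S^2$. This gives a surjection $\pi_2(\M_{g,3}) \twoheadrightarrow \pi_2(y_1,\dots,y_n) - [\phi_0]$, and injectivity follows the same way: a homotopy of the $S^2$-class through $n$-gons can be arranged rel the edges since the $L_{\alpha_k}$ contribute no $\pi_1$ or $\pi_2$. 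Hence $\pi_2(y_1,\dots,y_n) \cong \pi_2(\M_{g,3}) \cong \Z^4$ for $g > 0$; the $g = 0$ case is trivial since $\M_{0,3}$ is a point (so both sides are a point — one should note the lemma as stated presumably intends $g > 0$, or that for $g=0$ all intersection points coincide and the statement is vacuous/adjusted).

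The main obstacle I expect is making the ``push the boundary condition into the interior'' step fully rigorous with all $n$ edges simultaneously: one is really doing relative obstruction theory for the pair $(\D_n, \partial \D_n)$ with a \emph{stratified} target $(\M_{g,3}, \{L_{\alpha_k}\}, \{y_k\})$, and one must check there is no obstruction living in $\pi_1$ of the intersections $L_{\alpha_k} \cap L_{\alpha_{k+1}}$ or in the relative $\pi_2$ of the pairs $(L_{\alpha_k}, y_k)$ — all of which vanish because $(S^3)^g$ is $2$-connected and the $y_k$ are isolated transverse points. A clean way to package this, which I would pursue, is to observe that the inclusion $\bigcup_k L_{\alpha_k} \hookrightarrow \M_{g,3}$ induces an isomorphism on $\pi_1$ and that the relevant relative homotopy groups $\pi_j(\M_{g,3}, L_{\alpha_k})$ vanish for $j \le 2$ by the long exact sequence of the pair, so the connecting maps in the spectral sequence / obstruction tower computing $\pi_2$ of the $n$-gon space are all zero and only the $\pi_2(\M_{g,3})$ term survives.
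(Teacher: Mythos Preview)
Your proposal is correct and rests on exactly the same two ingredients as the paper's proof: the $2$-connectedness of each $L_{\alpha_k}\cong (S^3)^g$ (Theorem~\ref{thm:lagrangianS3}) and $\pi_2(\M_{g,3})\cong\Z^4$ (Proposition~\ref{homotopyModuli}). The difference is purely in packaging. You argue by hand, pushing the boundary conditions off edge-by-edge via obstruction theory, and you correctly identify the delicate point as handling all $n$ edges at once. The paper sidesteps that bookkeeping entirely by observing that the boundary-evaluation map
\[
\Whit(y_1,\dots,y_n)\longrightarrow \Omega_{L_{\alpha_1}}(y_1,y_2)\times\cdots\times\Omega_{L_{\alpha_n}}(y_n,y_1)
\]
is a Serre fibration with homotopy fiber $\Map_\ast(S^2,\M_{g,3})$ (identified after fixing a reference $n$-gon, just as you do). The long exact sequence in homotopy then gives $\pi_0(\Whit)\cong\pi_2(\M_{g,3})$ in one step, since the base has vanishing $\pi_0$ and $\pi_1$ precisely because each $L_{\alpha_k}$ is $2$-connected. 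So the fibration argument is a cleaner encoding of exactly the obstruction-theoretic content you describe; what it buys is that the ``simultaneous push-off'' you worry about is absorbed into the exactness of the sequence rather than done by hand.
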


\begin{proof}
If we use the notation $\Omega_Y(a,b)$ for the space of all continuous paths in $Y$ from $a$ to $b$, then the ``boundary evaluation'' map
\[
	\Whit(y_1, \dots, y_n) \longrightarrow \Omega_{L_{\alpha_1}}(y_1, y_2) \times \Omega_{L_{\alpha_2}}(y_2, y_3) \times \cdots \times \Omega_{L_{\alpha_n}}(y_n,y_1)
\]
is a Serre fibration with homotopy fiber $\Map_\ast(S^2, \scrR_{g,3})$ (to identify the fiber this way, fix a reference $n$-gon $\phi_0$ with the correct boundary evaluation and use it to cap off any other $n$-gon with the same boundary data). Part of the associated long exact sequence in homotopy reads
\[
	\pi_1(\Omega_{L_{\alpha_1}}(y_1, y_2) \times \cdots \times \Omega_{L_{\alpha_n}}(y_n,y_1)) \longrightarrow \pi_0(\Map_\ast(S^2, \scrR_{g,3})) \longrightarrow \pi_0(\Whit(y_1, \dots, y_n)) 
\]
\[
	\longrightarrow \pi_0(\Omega_{L_{\alpha_1}}(y_1, y_2) \times \cdots \times \Omega_{L_{\alpha_n}}(y_n,y_1)).
\]
Since $L_{\alpha_1} \cong \cdots \cong L_{\alpha_n} \cong S^3$ is $2$-connected, the outer two homotopy groups vanish and hence
\[
	\pi_0(\Map_\ast(S^2,\scrR_{g,3})) \cong \pi_0(\Whit(y_1, \dots, y_n))
\]
\[
	\implies \pi_2(\scrR_{g,3}) \cong \pi_2(y_1, \dots, y_n).
\]
Since $\pi_2(\scrR_{g,3}) \cong \Z^4$ by Proposition \ref{homotopyModuli}, we conclude that
\[
	\pi_2(y_1, \dots, y_n) \cong \Z^4. \qedhere
\]
\end{proof}

Note that the identification $\pi_2(y_1, \dots, y_n) \cong \pi_2(\scrR_{g,3})$ is affine (\emph{i.e.} non-canonical), since it depends on a choice of reference $n$-gon.

It should be observed that in contrast to the case of Heegaard Floer homology, the homotopy classes of Whitney bigons $\pi_2(x,y)$ in our setting contain no information about the $3$-manifold the Lagrangians are coming from.

\subsection{Disk Invariants of Heegaard Lagrangians}

Because $\scrR_{g,3}$ has minimal Chern number $1$ and the Lagrangians $L_\alpha$, $L_\beta$ coming from a Heegaard diagram are monotone, $L_\alpha$ and $L_\beta$ have minimal Maslov number $2$. This makes it possible for the Floer differential on $\CF(L_\alpha, L_\beta)$ to not square to zero, due to contributions coming from Maslov index $2$ disk bubbles. In this section, we study these moduli spaces of Maslov index $2$ disks and prove the necessary results that imply they do not obstruct the Floer differential.

Given any symplectic manifold $(M, \omega)$, a relatively spin Lagrangian $L \subset M$, a point $x \in L$, and an $\omega$-compatible complex structure $J$, we may define a moduli space
\[
	\calN_J(L, x) = \left\{ u: [0,\infty) \times \bfi\R \longrightarrow M \left| \begin{array}{l} \displaystyle \lim_{z \to \infty} u(z) = x \\ u(\bfi t) \in L \\ u \text{ is $J$-holomorphic} \\ u \text{ has Maslov index $2$}\end{array}\right.\right\}.
\]
The half-plane $[0,\infty) \times \bfi \R$ is conformally equivalent to a disk with a single puncture on its boundary, so we may equivalently think of $\calN_J(L,x)$ as the space of Maslov index $2$ $J$-holomorphic disks in $M$ with boundary mapped to $L$ and a marked point on the boundary sent to $x \in L$.

There is a two-dimensional group of conformal automorphisms of the domain $[0,\infty) \times \bfi\R$, corresponding to imaginary translations and real dilations. Write $\bar{\calN}_J(L,x)$ for the quotient of $\calN_J(L,x)$ by reparametrization of the domain. These disk moduli spaces satisfy the following compactness and transversality properties (cf. \cite[Proposition 16.3.1 and Proposition 16.3.2]{oh-vol2}):

\hide{Careful about whether the homology class of the disk needs to be fixed}

\begin{prop}
\label{prop:disk-moduli}
There is a dense subset $\J_\omega(L,x)$ of the space of $\omega$-compatible almost complex structures on $M$ such that the following hold:
\begin{itemize}
	\item[(1)] $\bar{\calN}_J(L,x)$ is a smooth, compact, oriented manifold of dimension $0$ for any $J \in \J_\omega(L,x)$.
	\item[(2)] If $\calN_J^{<2}(L,x)$ denotes the space of $J$-holomorphic disks satisfying the same conditions for disks in $\calN_J(L,x)$, except that the Maslov index is less than $2$ instead of exactly $2$, then $\calN_J^{<2}(L,x) = \varnothing$.
	\item[(3)] For any $J_1, J_2 \in \J_\omega(L,x)$, $\bar{\calN}_{J_1}(L,x)$ and $\bar{\calN}_{J_2}(L,x)$ are oriented cobordant.
	\item[(4)] For any $x_1, x_2 \in L$ and $J \in \J_\omega(L, x_1) \cap \J_\omega(L,x_2)$, $\bar{\calN}_J(L,x_1)$ and $\bar{\calN}_J(L,x_2)$ are oriented cobordant.
\end{itemize}
\end{prop}

Now, given $J \in \J_\omega(L,x)$, define the {\bf disk invariant} of $L$ to be the signed count of points
\[
	\Phi_J(L,x) = \#\bar{\calN}_J(L,x).
\]
Proposition \ref{prop:disk-moduli} implies that this is an invariant of the Lagrangian $L$, since for any points $x_1, x_2 \in L$ and almost complex structures $J_1 \in \J_\omega(L,x_1), J_2 \in \J_\omega(L,x_2)$, we have
\[
	\Phi_{J_1}(L,x_1) = \Phi_{J_2}(L,x_2).
\]
For this reason, we will tend to not explicitly refer to the basepoint and write $\Phi_J(L)$ instead of $\Phi_J(L,x)$. Since it is useful to keep track of the almost complex structure when considering Floer homology, we will not eliminate $J$ from the notation.

We now explain why the disk invariants of the Lagrangians coming from a Heegaard diagram are the same for all sets of attaching curves.

\begin{prop}
\label{prop:disk-inv-equal}
For any sets of attaching curves $\bfalpha$ and $\bfbeta$ in a surface of genus $g$, $\Phi_J(L_\alpha) = \Phi_J(L_\beta)$.
\end{prop}

\begin{proof}
As noted in the proof of Theorem \ref{thm:lagrangianS3}, for any pair of attaching curves $\bfalpha$ and $\bfbeta$, there exists a symplectomorphism $\phi: \scrR_{g,3} \longrightarrow \scrR_{g,3}$ (induced by an automorphism of the surface taking the $\alpha$-curves to the $\beta$-curves) such that $\phi(L_\alpha) = L_\beta$. Since the disk invariant of a Lagrangian is unchanged under symplectomorphisms, we conclude that $\Phi_J(L_\alpha) = \Phi_J(L_\beta)$.
\end{proof}

\hide{By \cite[Theorem 6.4]{auroux-anticanonical}, $\mathfrak{m}_0 = \Phi_J(L,x)e_L$.}
\section{Symplectic Instanton Homology via Heegaard Diagrams}

\subsection{The Definition}

Let $\calH = (\Sigma_g, \bfalpha, \bfbeta, \bfx)$ be a pointed Heegaard diagram. We have now shown how to associate a pair of Lagrangian submanifolds $L_\alpha$, $L_\beta$ in $\scrR_{g,3}$ to this data. The next step is to consider their Lagrangian Floer homology.

Fix orientations of $L_\alpha$ and $L_\beta$ \hide{(can we do this canonically at trivial rep?)} and an almost complex structure $J$ on $\scrR_{g,3}$ compatible with the symplectic form $\omega_{g,3}$. After applying a Hamiltonian isotopy, we may assume that $L_\alpha$ and $L_\beta$ intersect transversely, so that $L_\alpha \cap L_\beta$ is a finite set of points. Define the {\bf symplectic instanton chain complex} $(\CSI(\Sigma_g, \bfalpha, \bfbeta, \bfx), \partial_J)$ by
\[
	\CSI(\calH) = \bigoplus_{x \in L_\alpha \cap L_\beta} \Z \langle x\rangle,
\]
\[
	\partial_J x = \sum_{\sigma \in L_\alpha \cap L_\beta} \# \bar{\calM}(x,y) y,
\]
where $\# \bar{\calM}(x,y)$ denotes the signed count of Maslov index $1$ $J$-holomorphic strips (modulo translation) from $x$ to $y$ with boundary on $L_\alpha \cup L_\beta$. Note that since $L_\alpha$ and $L_\beta$ are $2$-connected, their chosen orientations induce unique relative spin structures (in the sense of \cite{FOOO}) which orient the moduli spaces $\bar{\calM}(x,y)$, so that the signed count of points makes sense. Of course, one could also work with $\F_2$ coefficients and ignore orientations.

\begin{thm}
For a generic almost complex structure $J$, $\partial_J^2 = 0$.
\end{thm}

\begin{proof}
As usual, the proof involves a study of the Gromov compactification of the moduli space $\bar{\calM}_2(x, y)$ of holomorphic representatives $u: \D \longrightarrow \scrR_{g,3}$ of Maslov index $2$ Whitney disks $\phi \in \pi_2(x,y)$ modulo translation. The ends of $\bar{\calM}_2(x,y)$ correspond to one of the following types of degenerations of $u$:\footnote{In principle, combinations of these degenerations may occur, but they are easily ruled out for Maslov index reasons.}
\begin{itemize}
	\item[(1)] (Strip breaking) $u = u_1 \cup u_2$, where $u_1 \in \bar{\calM}(x,z)$ and $u_2 \in \bar{\calM}(z,y)$ for some $z \in L_\alpha \cap L_\beta$.
	\item[(2)] (Sphere bubbling) $u = u' \cup v$, where $u' \in \bar{\calM}(x,y)$ and $v: S^2 \longrightarrow \scrR_{g,3}$ is a holomorphic sphere such that the images of $u'$ and $v$ meet at exactly one point $z$ of the interior of $u'(\D)$.
	\item[(3)] (Disk bubbling) $u = u_1 \cup u_2$, where $u_1 \in \bar{\calM}(x,y)$ and $u_2 \in \bar{\calN}(L_\alpha, z)$ or $u_2 \in \bar{\calN}(L_\beta, z)$ for some $z \in u_2(\partial \D)$.
\end{itemize}

First, we rule out the possibility of sphere bubbles in the Gromov compactification of $\bar{\calM}_2(x,y)$. Since $\scrR_{g,3}$, $L_\alpha$, and $L_\beta$ are monotone and the minimal Chern number of $\scrR_{g,3}$ is $1$ (Corollary \ref{cor:minchern}), any sphere bubble $v: S^2 \longrightarrow \scrR_{g,3}$ has Maslov index at least $2$. Since $u = u' \cup v \in \bar{\calM}_2(x,y)$, transversality implies that $v$ has Maslov index $2$ and hence $u'$ has Maslov index zero, which in turn implies $x = y$. Then $v$ is a holomorphic sphere passing through $x \in L_\alpha \cap L_\beta$, which is impossible by our definition of genericity of $J$. Therefore sphere bubbling does not occur in the Gromov compactification of $\bar{\calM}_2(x,y)$.

As a second step, we describe the possible disk bubbles. By Proposition \ref{prop:disk-moduli}(2), $u_2 \in \bar{\calN}(L_\bullet, z)$ ($\bullet = \alpha \text{ or } \beta$) must have Maslov index at least $2$. Therefore $u_1$ must have Maslov index zero, implying $x = y$ and $u_1$ is the constant map. This further implies that $z = x$, so that disk bubbles appear only in the Gromov compactification of $\bar{\calM}_2(x,x)$, $x \in L_\alpha \cap L_\beta$.

It now follows that
\[
	\partial_J^2 x = \#\bar{\calN}(L_\alpha, x) - \#\bar{\calN}(L_\beta, x) + \sum_{\substack{y \in L_\alpha \cap L_\beta \\ \phi \in \pi_2(x,y)}} \hspace{.2cm}\sum_{\substack{\psi_1 \in \pi_2(x,z), \psi_2 \in \pi_2(z,y) \\ \text{such that }\psi_1 \ast \psi_2 = \phi}} \left(\#\bar{\calM}(\psi_1) \cdot \#\bar{\calM}(\psi_2)\right)y.
\]
Proposition \ref{prop:disk-inv-equal} implies that $\#\bar{\calN}(L_\alpha, x) - \#\bar{\calN}(L_\beta,x) = 0$, and the usual method for analyzing strip breaking implies that the last term is also zero. Therefore $\partial_J^2 = 0$ for generic $J$.
\end{proof}

It follows that $(\CSI(\calH), \partial_J)$ is a chain complex and the {\bf symplectic instanton homology}
\[
	\SI(\calH) = H_\ast(\CSI(\calH), \partial_J)
\]
is a well-defined invariant of the pointed Heegaard diagram $\calH = (\Sigma_g, \bfalpha, \bfbeta, \bfx)$ (\emph{i.e.} independent of the Hamiltonian isotopies used to achieve transversality of the Lagrangians and the choice of almost complex structure $J$, by the usual continuation map arguments in Floer theory).

\subsection{Gradings}

\label{sect:gradings}

The Lagrangian Floer chain group $\CF(L_\alpha, L_\beta)$ always admits an absolute $\Z/2$-grading as follows. We assume that $L_\alpha$ intersects $L_\beta$ transversely (so that perhaps a Hamiltonian perturbation has been applied). First, choose orientations on $L_\alpha$ and $L_\beta$ (this has already been done if we are using Floer homology with $\Z$-coefficients); $\scrR_{g,3}$ has an orientation determined by the symplectic form. Any given generator $y \in \CF(L_\alpha, L_\beta)$ corresponds to a transverse intersection point of $L_\alpha$ and $L_\beta$ since we assume $L_\alpha \pitchfork L_\beta$. We define the $\Z/2$-grading of the generator $y$ by
\[
	\gr(y) = \begin{cases} 0 & \text{ if $y$ is a positive intersection point of $L_\alpha$ with $L_\beta$,} \\ 1 & \text{ if $y$ is a negative intersection point of $L_\alpha$ with $L_\beta$.} \end{cases}
\]
It is clear that the Floer differential is of degree $1$ with respect to this grading, so that this grading descends to the Lagrangian Floer homology group $\SI(\calH) = \HF(L_\alpha, L_\beta)$.

\section{Invariance and Naturality with Respect to Heegaard Diagrams}

\label{sect:invariance}

Now we turn to an investigation of the invariance and naturality of symplectic instanton homology as an invariant of Heegaard diagrams. Recall from Appendix \ref{sect:heegaard-diagrams} that all Heegaard diagrams of a fixed $3$-manifold are related by a sequence of isotopies of attaching curves, handleslides, and (de)stabilizations. Hence to prove invariance we just need to check that these moves induce isomorphisms on symplectic instanton homology.



\subsection{Isotopies of Attaching Curves}

Since the Lagrangians $L_\alpha$, $L_\beta$ are defined in terms of $\SU(2)$-representations of $\pi_1(\Sigma_g \setminus \mathbf{x})$ sending the $\alpha$- or $\beta$-curves to $I$, they are unchanged under isotopies of the $\alpha$- and $\beta$-curves. Therefore:

\begin{thm}
Let $(\Sigma_g, \bfalpha, \bfbeta, \bfx)$ and $(\Sigma_g, \bfalpha', \bfbeta', \bfx)$ be two Heegaard diagrams such that $\alpha_k$ is isotopic to $\alpha_k^\prime$ and $\beta_k$ is isotopic to $\beta_k^\prime$ for $k = 1, \dots, g$. Then
\[
	\CSI(\Sigma_g, \bfalpha, \bfbeta, \bfx) = \CSI(\Sigma_g, \bfalpha', \bfbeta', \bfx).
\]
\end{thm}

\subsection{Handleslides}

It also is easy to see that handleslides do not affect the symplectic instanton chain complex at all.

\begin{thm}
Let $(\Sigma_g, \bfalpha, \bfbeta, \bfx)$ and $(\Sigma_g, \bfalpha', \bfbeta', \bfx)$ be two Heegaard diagrams such that $\alpha_k^\prime$ is obtained from $\alpha_k$ via handleslides with the $\alpha$-curves and $\beta_k^\prime$ is obtained from $\beta_k$ via handleslides with the $\beta$-curves for $k = 1, \dots, g$. Then
\[
	\CSI(\Sigma_g, \bfalpha, \bfbeta, \bfx) = \CSI(\Sigma_g, \bfalpha', \bfbeta', \bfx).
\]
\end{thm}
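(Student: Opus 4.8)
The plan is to show that the Lagrangians $L_\alpha$ and $L_\beta$ are \emph{literally unchanged} by handleslides, so that the two chain complexes coincide on the nose, differential included.

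First I would record a reformulation of the definition of $L_\alpha$. Since $L_\alpha = \{[\rho] \in \M_{g,3} : \rho([\alpha_k]) = I \text{ for } k = 1,\dots,g\}$, and the condition $\rho([\alpha_k]) = I$ is preserved under conjugating $[\alpha_k]$ by any element of $\pi_1(\Sigma_g\setminus\bfx)$ and under taking products, a representation $\rho$ satisfies $\rho([\alpha_k]) = I$ for all $k$ if and only if $\rho$ is trivial on the normal subgroup $N_\alpha \trianglelefteq \pi_1(\Sigma_g\setminus\bfx)$ generated by the classes $[\alpha_1],\dots,[\alpha_g]$. Membership of a simple closed curve's class in the normal subgroup $N_\alpha$ depends only on its free homotopy class, so there is no ambiguity about basepoints here. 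Thus $L_\alpha$ depends only on $N_\alpha$, and similarly $L_\beta$ depends only on $N_\beta$.

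Next I would verify that handleslides preserve $N_\alpha$. Recall that handleslides in a pointed Heegaard diagram take place in the complement of $\bfx$, so all curves involved lie in $\Sigma_g\setminus\bfx$. A single handleslide of $\alpha_k$ across $\alpha_j$ ($j\neq k$) replaces $\alpha_k$ by a curve $\alpha_k'$ isotopic to the band-sum of $\alpha_k$ with a parallel copy of $\alpha_j$; choosing a basepoint and connecting arcs, $[\alpha_k']$ is conjugate in $\pi_1(\Sigma_g\setminus\bfx)$ to $u[\alpha_k]u^{-1}\cdot v[\alpha_j]^{\pm 1}v^{-1}$ for suitable $u,v$. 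In particular $[\alpha_k']\in N_\alpha$, while $[\alpha_i] = [\alpha_i']$ for $i\neq k$, so $N_{\alpha'}\subseteq N_\alpha$; since a handleslide is inverted by another handleslide, the reverse inclusion holds and $N_{\alpha'} = N_\alpha$. Iterating over the sequence of handleslides carrying $\bfalpha$ to $\bfalpha'$ gives $N_{\bfalpha'} = N_{\bfalpha}$, hence $L_{\alpha'} = L_\alpha$ as subsets of $\M_{g,3}$; the identical argument for the $\beta$-curves gives $L_{\beta'} = L_\beta$.

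Finally, since the generators of $\CSI$ are the points of $L_\alpha\cap L_\beta$ and the differential counts Maslov-index-$1$ strips with boundary on $L_\alpha\cup L_\beta$, and since $L_\alpha = L_{\alpha'}$, $L_\beta = L_{\beta'}$ we may use the same Hamiltonian perturbation to attain transversality in both cases, the two complexes $\CSI(\Sigma_g,\bfalpha,\bfbeta,\bfx)$ and $\CSI(\Sigma_g,\bfalpha',\bfbeta',\bfx)$ agree together with their differentials. The only point that needs care is the identification of $[\alpha_k']$ as an element of the normal closure $N_\alpha$ — that is, unwinding the geometry of the band-sum in terms of $\pi_1$ — but because we only need membership in a normal subgroup, the usual subtleties about basepoints and orientations are irrelevant, so this step is routine.
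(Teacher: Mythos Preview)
Your proof is correct and follows essentially the same approach as the paper: both arguments show that the Lagrangians $L_\alpha$ and $L_{\alpha'}$ coincide as subsets of $\M_{g,3}$, so the chain complexes agree on the nose. The only difference is cosmetic: the paper phrases the key step gauge-theoretically (the pair of pants bounded by $\alpha_1, \alpha_2, \alpha_1'$ forces the holonomy around $\alpha_1$ to be trivial once the other two are), whereas you phrase it in terms of $\pi_1$ and normal closures; these are two ways of saying the same thing.
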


\begin{proof}
It suffices to consider the case of a single handleslide between two $\alpha$-curves, say a handleslide of $\alpha_1$ over $\alpha_2$. Hence we get two sets of attaching curves
\[
	\bfalpha = (\alpha_1, \alpha_2, \dots, \alpha_g), \quad\quad \bfalpha' = (\alpha_1^\prime, \alpha_2, \dots, \alpha_g),
\]
where $\alpha_1^\prime$ is the result of sliding $\alpha_1$ over $\alpha_2$. We claim that the Lagrangians $L_\alpha$ and $L_{\alpha^\prime}$ are identical as subsets of $\scrR_{g,3}$. To see this, we use the gauge theoretic description of the Lagrangians. Let $[A]$ be any gauge equivalence class of connections in $L_{\alpha^\prime}$, and pick a specific representative $A$ for this class. By the definition of a handle slide, there is a smooth embedding $\phi: P \longrightarrow \Sigma_g$ of the pair of pants $P$ such that the boundary of the image $\phi(P)$ is $\alpha_1 \amalg \alpha_2 \amalg \alpha_1^\prime$.  Since $[A] \in L_{\alpha^\prime}$, the pullback connection $\phi^\ast A$ on $P$ has trivial holonomy around the two boundary components of $P$ mapping to $\alpha_1^\prime$ and $\alpha_2$ under $\phi$. Therefore $\phi^\ast A$ extends to a connection on $D^2$, and is necessarily trivial, so that we may conclude $\Hol_A(\alpha_1) = I$. Therefore if $[A] \in L_{\alpha^\prime}$, we also have $[A] \in L_{\alpha}$.

A similar argument shows that if $[A] \in L_\alpha$, then $[A] \in L_{\alpha^\prime}$. Therefore handleslides do not change the Lagrangians and the result follows.
\end{proof}

\subsection{Stabilization}

In contrast to the other Heegaard moves, invariance under stabilization is not trivial to prove. We will use the quilted Floer theory of Wehrheim and Woodward \cite{quilts} (reviewed in Appendix \ref{sect:quilts}) to give a fairly simple proof of stabilization invariance.

Given a Heegaard diagram $(\Sigma_g, \bfalpha, \bfbeta, \bfx)$, write $(\Sigma_{g+1}, \bfalpha', \bfbeta', \bfx)$ for its stabilization and let
\begin{align}
	L_{\alpha\alpha^\prime} = \left\{[\rho_1] \in \scrR_{g,3}, [\rho_2] \in \scrR_{g+1,3} ~\left|~ \begin{array}{l l}\rho_1(\alpha_k) = \rho_2(\alpha_k), & k = 1, \dots, g \\ \rho_1(\beta_k) = \rho_2(\beta_k), & k = 1, \dots, g \\ \rho_1(\mu_k) = \rho_2(\mu_k), & k = 1, 2, 3 \\ \rho_2(\alpha_{g+1}) = I & \end{array}\right\}\right.,
	\label{eqn:Laa}
\end{align}
\begin{equation}
	L_{\beta^\prime\beta} = \left\{[\rho_1] \in \scrR_{g+1,3}, [\rho_2] \in \scrR_{g,3} ~\left|~ \begin{array}{l l}\rho_1(\alpha_k) = \rho_2(\alpha_k), & k = 1, \dots, g \\ \rho_1(\beta_k) = \rho_2(\beta_k), & k = 1, \dots, g \\ \rho_1(\mu_k) = \rho_2(\mu_k), & k = 1, 2, 3 \\ \rho_1(\beta_{g+1}) = I & \end{array}\right\}\right..
	\label{eqn:Lbb}
\end{equation}

One may easily check the following:

\begin{lem}
$L_{\alpha\alpha^\prime}$ is a Lagrangian correspondence from $\scrR_{g,3}$ to $\scrR_{g+1,3}$ and $L_{\beta^\prime\beta}$ is a Lagrangian correspondence from $\scrR_{g+1,3}$ to $\scrR_{g,3}$. Furthermore, the geometric compositions $L_\alpha \circ L_{\alpha\alpha^\prime}$, $L_{\beta^\prime\beta} \circ L_\beta$, and $L_{\alpha\alpha^\prime} \circ L_{\beta^\prime\beta}$ are all embedded and are respectively equal to $L_{\alpha^\prime}$, $L_{\beta^\prime}$, and $\Delta_{\scrR_{g,3}}$, where $\Delta_{\scrR_{g,3}}$ denotes the diagonal in $\scrR_{g,3} \times \scrR_{g,3}$.
\end{lem}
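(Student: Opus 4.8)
The plan is to check each of the claimed statements directly from the holonomy description of the moduli spaces and the construction of Lagrangians from Section 3, the point being that these Lagrangian correspondences are literally the ones attached to the elementary bordisms of a stabilization, so everything reduces to applications of Proposition \ref{prop:lag-composition} and the proof of Theorem \ref{thm:lagrangianS3}. First I would verify that $L_{\alpha\alpha'}$ and $L_{\beta'\beta}$ are Lagrangian correspondences. The cleanest way is to recognize $L_{\alpha\alpha'}$ as $L(Y^\theta)$ for the elementary bordism $Y: \Sigma_g \to \Sigma_{g+1}$ which attaches a $1$-handle (creating the new genus) together with a $2$-handle along $\beta_{g+1}$ (the belt sphere of that $1$-handle); the defining conditions $\rho_1(\alpha_k)=\rho_2(\alpha_k)$, $\rho_1(\beta_k)=\rho_2(\beta_k)$ for $k\le g$ and $\rho_2(\beta_{g+1})=I$ are exactly the restriction conditions for a representation of $\pi_1$ of this bordism's complement-of-tangle. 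Then Proposition \ref{lag-composition}'s proof (the Seifert--Van Kampen argument) shows it is a Lagrangian correspondence; alternatively one can just invoke the Proposition proved in Section 3 that $L(Y_j^\theta)$ is always a Lagrangian correspondence. The same argument, with the roles of handles reversed, handles $L_{\beta'\beta}$.

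Next I would compute the three geometric compositions. For $L_\alpha \circ L_{\alpha\alpha'}$: an element is a class in $\M_{g+1,3}$ admitting a representative $\rho_2$ with $\rho_2(\alpha_k)=I$ for $k\le g$ (from matching $L_\alpha$ via some $\rho_1$ with $\rho_1(\alpha_k)=I$) and $\rho_2(\beta_{g+1})=I$; but $\beta_{g+1}$ is the new $\alpha$-curve $\alpha_{g+1}'$ of the stabilized diagram up to the relabeling inherent in stabilization, so these are exactly the conditions cutting out $L_{\alpha'}\subset \M_{g+1,3}$. One must also check the composition is embedded — i.e.\ that the fiber product is cut out transversally and the projection to $\M_{g+1,3}$ is injective. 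Transversality follows as in the proof of the Proposition that $L_\alpha$ is Lagrangian (the relevant $C_\gamma$'s meet transversally because, after reducing to the standard curves via Dehn twists inducing symplectomorphisms, the conditions involve independent matrix entries); injectivity of the projection follows because the class in $\M_{g,3}$ is determined by restriction, so there is no extra data in the first factor. The computation of $L_{\beta'\beta}\circ L_\beta = L_{\beta'}$ is symmetric. For $L_{\alpha\alpha'}\circ L_{\beta'\beta}=\Delta_{\M_{g,3}}$: composing over $\M_{g+1,3}$, an element is a pair of classes in $\M_{g,3}$ both of which are restrictions of a single class in $\M_{g+1,3}$ satisfying $\rho(\beta_{g+1})=I$ and $\rho(\alpha_{g+1})=I$; since $\alpha_{g+1},\beta_{g+1}$ generate the extra $\pi_1$ introduced by stabilization, such a $\rho$ is determined by and extends any representation of $\pi_1(\Sigma_g\setminus\mathbf{x})$, so the two restrictions coincide, giving the diagonal; conversely any diagonal element arises this way. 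Again embeddedness needs the transversality of $\{\rho(\alpha_{g+1})=I\}\cap\{\rho(\beta_{g+1})=I\}$ inside $\M_{g+1,3}$, which holds because $\alpha_{g+1}$ and $\beta_{g+1}$ intersect once on $\Sigma_{g+1}$.

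I expect the main obstacle to be the embeddedness (not merely geometric-composability) of these three compositions, since cleanness of the fiber product and injectivity of the projection are precisely what is needed for the subsequent stabilization-invariance argument via quilted Floer theory, and these require the transversality statements to be argued carefully rather than asserted. Everything else is a matter of unwinding definitions with Seifert--Van Kampen; the genuinely geometric input is that the new curves $\alpha_{g+1}$, $\beta_{g+1}$ form a dual pair meeting in a single point, which is what forces both the "$=L_{\alpha'}$ / $=L_{\beta'}$" identifications and the transversality giving $\Delta_{\M_{g,3}}$.
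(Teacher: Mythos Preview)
The paper omits the proof of this lemma entirely, writing only ``One may easily check the following,'' so there is nothing to compare against in terms of strategy. Your proposal is exactly the routine verification the paper has in mind: identify $L_{\alpha\alpha'}$ and $L_{\beta'\beta}$ with the Lagrangian correspondences $L(Y^\theta)$ attached to the elementary bordisms of the stabilization (so that the Section~3 proposition gives the Lagrangian property for free), then unwind the definition of geometric composition and check transversality/injectivity for embeddedness. The computations you sketch for the three compositions are correct, and you are right that the only nontrivial point is embeddedness, which does follow from the transversality of the coisotropics $C_{\alpha_{g+1}}$ and $C_{\beta_{g+1}}$ (a consequence of $\alpha_{g+1}\cdot\beta_{g+1}=1$ on $\Sigma_{g+1}$).

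One small correction: your description of the elementary bordism realizing $L_{\alpha\alpha'}$ as ``attaches a $1$-handle \ldots together with a $2$-handle along $\beta_{g+1}$'' is garbled. An elementary bordism has at most one critical point; attaching a $1$-handle and then a $2$-handle along its belt sphere would cancel and give a cylinder. What you want is simply the single $1$-handle attachment $\Sigma_g\to\Sigma_{g+1}$; the condition $\rho_2(\beta_{g+1})=I$ arises because $\beta_{g+1}$ is the boundary of the cocore disk and hence bounds in the bordism. With that fix, your argument goes through as written.
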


Now a basic series of manipulations and the fact that embedded geometric composition leaves Floer homology invariant up to canonical isomorphism finishes the job:

\begin{thm}
\label{thm:stabilization-inv}
Let $(\Sigma_g, \bfalpha, \bfbeta, \bfx)$ be a Heegaard diagram and $(\Sigma_{g+1}, \bfalpha', \bfbeta', \bfx)$ be its stabilization. Then there is a canonical isomorphism
\[
	\SI(\Sigma_g, \bfalpha, \bfbeta, \bfx) \cong \SI(\Sigma_{g+1}, \bfalpha', \bfbeta', \bfx).
\]
\end{thm}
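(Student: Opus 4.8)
The plan is to deduce the stabilization isomorphism by stringing together the three embedded geometric compositions identified in the preceding Lemma and invoking the Wehrheim--Woodward theorem that embedded geometric composition induces a canonical isomorphism on quilted Floer homology. First I would write $\SI(\Sigma_g,\bfalpha,\bfbeta,\bfx)$ as the quilted Floer homology $\HF(L_\alpha,L_\beta)$, then insert the diagonal correspondence: since $L_{\alpha\alpha'}\circ L_{\beta'\beta}=\Delta_{\M_{g,3}}$ is an embedded composition, we have a canonical isomorphism
\[
	\HF(L_\alpha,L_\beta)\cong\HF(L_\alpha,L_{\alpha\alpha'},L_{\beta'\beta},L_\beta),
\]
where the right-hand side is the cyclic quilted Floer homology with seam conditions in the sequence $\M_{g,3},\M_{g+1,3},\M_{g,3}$ (using that $L_\alpha,L_\beta$ are Lagrangian submanifolds of $\M_{g,3}$ viewed as correspondences from $\M_{0,3}$). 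The point of inserting the diagonal is that it lets the $\M_{g+1,3}$-factor enter the picture.

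Next I would regroup the cyclic sequence two different ways. On one hand, using $L_\alpha\circ L_{\alpha\alpha'}=L_{\alpha'}$ (embedded) and $L_{\beta'\beta}\circ L_\beta=L_{\beta'}$ (embedded), both compositions collapse to give
\[
	\HF(L_\alpha,L_{\alpha\alpha'},L_{\beta'\beta},L_\beta)\cong\HF(L_{\alpha'},L_{\beta'})=\SI(\Sigma_{g+1},\bfalpha',\bfbeta',\bfx).
\]
Each isomorphism here is canonical because the relevant geometric composition is embedded, which is exactly the hypothesis under which the Wehrheim--Woodward invariance theorem applies; composing these canonical isomorphisms with the one from the previous paragraph yields the desired canonical isomorphism $\SI(\Sigma_g,\bfalpha,\bfbeta,\bfx)\cong\SI(\Sigma_{g+1},\bfalpha',\bfbeta',\bfx)$.

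\textbf{Main obstacle.} The routine bookkeeping (orientation of seams, keeping track of which correspondence goes between which pair of moduli spaces, and the fact that $\M_{0,3}$ is a point so that $L_\alpha$ and $L_\beta$ really do behave as correspondences) is straightforward once set up carefully. The genuine content — and the step I expect to be the crux — is verifying that the hypotheses of the geometric composition theorem of Wehrheim and Woodward are met: one needs each of the three compositions in the Lemma to be not merely set-theoretically embedded but \emph{transverse} and \emph{embedded} in the technical sense (the projection from the fiber product is injective and an immersion onto a smooth submanifold), and one needs the requisite monotonicity and minimal Chern/Maslov number hypotheses so that quilted Floer homology is defined and the isomorphism holds. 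Monotonicity of all spaces and Lagrangians involved has been arranged in Section 2 and the Heegaard-diagram construction, so the real work is confirming the embeddedness/transversality claims of the Lemma carefully enough to feed into the black-box theorem; I would handle this by an explicit computation in the ``standard'' $\alpha$- and $\beta$-curve model (as in the proofs of Theorems \ref{thm:lagrangianS3} and \ref{thm:lagrangianintersection}), where $L_{\alpha\alpha'}$, $L_{\beta'\beta}$ are cut out by the conditions $\rho_2(\beta_{g+1})=I$ resp. $\rho_1(\alpha_{g+1})=I$, and then transport to the general case via the symplectomorphisms induced by Dehn twists, exactly as in the earlier proofs.
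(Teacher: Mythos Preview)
Your proposal is correct and follows essentially the same argument as the paper: insert the diagonal $\Delta_{\M_{g,3}} = L_{\alpha\alpha'} \circ L_{\beta'\beta}$ into $\HF(L_\alpha, L_\beta)$, use the Wehrheim--Woodward geometric composition isomorphism to split it, and then recompose at the other ends to obtain $\HF(L_{\alpha'}, L_{\beta'})$. The paper's proof is exactly this chain of isomorphisms, with the embeddedness hypotheses taken from the preceding Lemma.
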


\begin{proof}
We simply compute that
\begin{align*}
	\HF(L_\alpha, L_\beta) & = \HF(L_\alpha, \Delta_{\scrR_{g,3}}, L_\beta) \\
	 & = \HF(L_\alpha, L_{\alpha\alpha^\prime} \circ L_{\beta^\prime\beta}, L_\beta) \\
	 & \cong \HF(L_\alpha, L_{\alpha\alpha^\prime}, L_{\beta^\prime\beta}, L_\beta) \\
	 & \cong \HF(L_\alpha \circ L_{\alpha\alpha^\prime}, L_{\beta^\prime\beta} \circ L_\beta) \\
	 & = \HF(L_{\alpha^\prime}, L_{\beta^\prime}). \qedhere
\end{align*}
\end{proof}

Since all possible Heegaard diagrams for a fixed $3$-manifold $Y$ are related by a finite sequence of isotopies, handleslides, and stabilizations, we see that we have obtained a proof that $\SI(\calH)$ is a topological invariant of $Y$ (up to isomorphism).

\subsection{Naturality}

By showing the invariance of $\SI(\mathcal{H})$ under pointed Heegaard moves on $\mathcal{H}$, we established that it gives a well-defined isomorphism class of a group $\SI(Y)$ associated to the $3$-manifold $Y$ represented by $\mathcal{H}$. In order to define maps induced by cobordisms (as we do in \cite{horton2}), it is necessary to have a well-defined \emph{group} $\SI(Y)$, not just an isomorphism class of groups. To pin down a specific group $\SI(Y)$, we need to consider \emph{loops} of Heegaard diagrams $\mathcal{H}_t$ and ensure that $\SI(\mathcal{H}_t)$ has no monodromy. This is potentially a very complicated thing to check since the fundamental group of the space of Heegaard splittings of a $3$-manifold is highly nontrivial.

Fortunately, Juh\'asz and Thurston \cite[Definition 2.33]{HFnaturality} have determined a sufficient set of four conditions one must check to ensure that an algebraic invariant of pointed Heegaard diagrams gives a well-defined group. The first three conditions are trivial to check for symplectic instanton homology, so to shorten the exposition we omit a discussion of them here. The fourth condition is invariance under simple handleswaps, illustrated in Figure \ref{fig:handleswap}. The move is depicted locally, so that each picture is in fact representing a different Heegaard diagram $\calH_k$ ($k = 1, 2, 3$), each of equal genus, where the diagrams differ only in a subsurface diffeomorphic to $\Sigma_2 \setminus D^2$ in the way described by the local pictures.

\begin{figure}[h]
	\centering
	\includegraphics[scale=1]{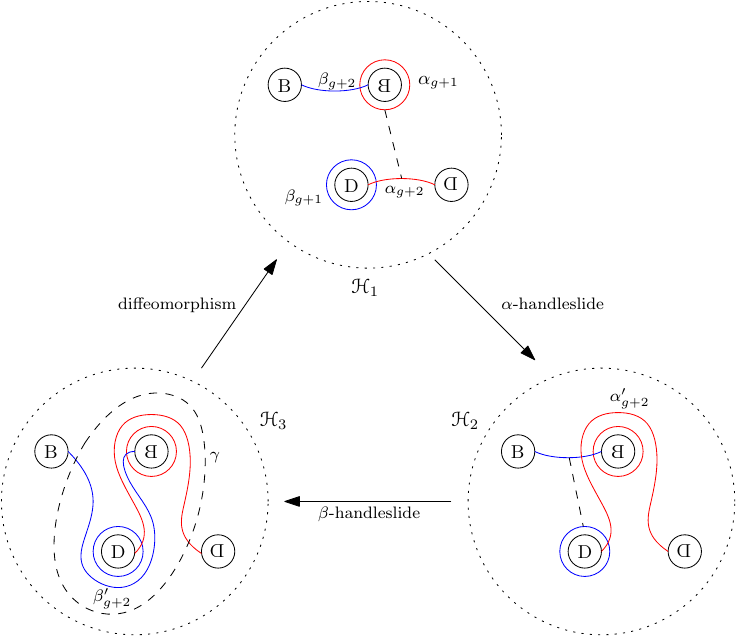}
	\caption{The simple handleswap move.}
	\label{fig:handleswap}
\end{figure}

In the simple handleswap move, the maps $\mathcal{H}_1 \longrightarrow \mathcal{H}_2$ and $\mathcal{H}_2 \longrightarrow \mathcal{H}_3$ are handleslides along the dotted arcs, while the third map $\mathcal{H}_3 \longrightarrow \mathcal{H}_1$ is the composition of Dehn twists, $\tau_\gamma^{-1} \circ \tau_{\beta_{g+1}} \circ \tau_{\alpha_{g+1}}$, where $\gamma$ denotes the large dotted curve in $\mathcal{H}_3$.

\begin{prop}
The map $\SI(\calH_1) \longrightarrow \SI(\calH_1)$ induced by a simple handleswap is the identity.
\label{prop:handleswap}
\end{prop}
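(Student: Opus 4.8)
The plan is to strip the handleswap down to the symplectomorphism induced by the Dehn-twist diffeomorphism $\tau_\gamma^{-1} \circ \tau_{\beta_1} \circ \tau_{\alpha_1}$, and then to verify by an orientation count that this map is literally the identity on the chain level. The first observation is that the two maps $\calH_1 \to \calH_2$ and $\calH_2 \to \calH_3$ in a simple handleswap are handleslides among $\alpha$-curves and among $\beta$-curves (along the dotted arcs), and by the handleslide invariance theorem of this section such moves do not merely induce isomorphisms but are equalities of chain complexes $\CSI(\calH_1) = \CSI(\calH_2) = \CSI(\calH_3)$, since the Lagrangians $L_\alpha$, $L_\beta$ are unchanged as subsets of $\M_{g,3}$. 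So the handleswap map is exactly the map on $\CSI(\calH_1)$ induced by the diffeomorphism $\psi := \tau_\gamma^{-1} \circ \tau_{\beta_1} \circ \tau_{\alpha_1}$. Because $\psi$ carries the (handleslid) attaching curves of $\calH_3$ back to those of $\calH_1$, the induced symplectomorphism $\psi_\ast$ of $\M_{g,3}$ preserves both $L_\alpha$ and $L_\beta$, so that the map given on generators by $\langle \rho \rangle \mapsto \pm\langle \psi_\ast(\rho)\rangle$ is an endomorphism of $\CSI(\calH_1)$, and the goal becomes to show it equals $\id$.

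Next I would argue that $\psi_\ast$ fixes every generator. A simple handleswap is supported inside a standard genus-$2$ region $\Sigma_0 \subset \Sigma_g$ whose restricted Heegaard diagram is the genus-$2$ diagram for $S^3$ obtained by two stabilizations, disjoint from the basepoint; on $\Sigma_0$ the diffeomorphism $\psi$ is the obvious involution interchanging the two handles. Its extension over $Y$ is then supported in a ball disjoint from the $\theta$-graph (each stabilization handle caps off trivially), hence is isotopic to the identity, and so acts trivially on $\Hom(\pi_1(Y), \SU(2))$; under the identification $L_\alpha \cap L_\beta \cong \Hom(\pi_1(Y), \SU(2))$ of Theorem~\ref{thm:lagrangianintersection} this means $\psi_\ast$ fixes each intersection point $\rho$. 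Hence $\psi_\#$ multiplies each generator by a sign, and everything reduces to computing that sign. (Alternatively one can route this through the canonical stabilization isomorphism of Theorem~\ref{thm:stabilization-inv}, reducing the whole problem to the map on $\SI(S^3) \cong \Z\langle\theta\rangle$ induced by a diffeomorphism fixing the trivial-representation generator $\theta$ of Corollary~\ref{cor:trivialint}.)

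Finally I would pin down the sign. Since $d\psi_\ast$ fixes $\rho$, the sign by which $\psi_\#$ scales $\langle\rho\rangle$ is the product of three contributions, coming from the action of $d\psi_\ast$ on the orientations of $\M_{g,3}$, of $L_\alpha$, and of $L_\beta$ at $\rho$. The contribution from $\M_{g,3}$ is $+1$, because $\psi_\ast$ is a symplectomorphism and hence preserves the symplectic orientation. The contribution from $L_\alpha$ is $-1$: under the parametrization $L_\alpha \cong (S^3)^g$ of Theorem~\ref{thm:lagrangianS3}, $\psi_\ast|_{L_\alpha}$ is, up to isotopy, the transposition of the two $S^3$-factors corresponding to the interchanged handles, and a transposition of two factors of $S^3 \times S^3$ is orientation-reversing ($d\psi_\ast$ acts as $-\id$ on the rank-$3$ normal bundle of the diagonal, so has determinant $-1$). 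The contribution from $L_\beta$ is $-1$ for exactly the same reason. The product is $(+1)(-1)(-1) = +1$, so $\psi_\# = \id$ on $\CSI(\calH_1)$, and a fortiori the handleswap acts as the identity on $\SI(\calH_1)$. The crux of the argument is precisely this last step: a priori the handleswap could act by $-\id$, and the whole point is that the orientation-reversal caused by swapping two three-dimensional handle factors occurs simultaneously on $L_\alpha$ and on $L_\beta$, so the two signs cancel.
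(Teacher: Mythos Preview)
The paper's proof is much shorter and takes a different route. After noting, as you do, that the two handleslides act as the identity on the chain complex, the paper works entirely with the \emph{local} genus-$2$ picture: this represents $S^3$, so by Theorem~\ref{thm:lagrangianintersection} the intersection $L_\alpha \cap L_\beta$ consists only of the trivial representation and the chain complex is $\Z$ with zero differential; since the diffeomorphism fixes this unique generator it induces the identity. The passage to a general Heegaard diagram is then handled by observing that the handleswap region is a double stabilization and invoking the stabilization isomorphism. The paper does not engage with the sign issue that you treat as the crux of the matter.

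Your argument is more ambitious in that you explicitly confront the sign ambiguity, which is a legitimate concern since $-\id$ on $\Z$ would defeat naturality. However, your sign computation rests on the assertion that $\psi = \tau_\gamma^{-1}\circ\tau_{\beta_1}\circ\tau_{\alpha_1}$ is, up to isotopy, ``the obvious involution interchanging the two handles'' of $\Sigma_0$, so that $\psi_\ast|_{L_\alpha}$ transposes two $S^3$-factors. This claim is not justified and is not obviously true: $\psi$ is a specific product of three Dehn twists engineered to carry the handleslid curves of $\calH_3$ back to those of $\calH_1$, and there is no reason to expect it to coincide in the mapping class group with the hyperelliptic-type handle-swapping involution (indeed the latter has order two, which such a product of twists generally does not). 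To make your orientation argument rigorous you would need to compute how the extension of $\psi$ to the genus-$2$ handlebody $H_\alpha$ acts on the free group $\pi_1(H_\alpha)$, read off the induced self-map of $L_\alpha \cong \SU(2)^2$, and check its effect on orientation directly. Without that computation the $(+1)(-1)(-1)=+1$ conclusion is unsupported.
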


\begin{proof}
Since handleslides have already been found to induce to identity on symplectic instanton homology, we just need to check that the diffeomorphism $\tau_\gamma^{-1} \circ \tau_{\beta_{g+1}} \circ \tau_{\alpha_{g+1}}$ induces the identity. As such, we only need to focus on $\CSI(\calH_1)$ and $\CSI(\calH_3)$. Let $Y$ be the $3$-manifold represented by the Heegaard diagram $\bar{\calH}$ obtained by taking any of the $\calH_k$, removing the piece pictured in Figure \ref{fig:handleswap}, and replacing it with a disk (with no attaching curves). Then each $\calH_k$ represents $Y \# S^3 \# S^3$, where the $S^3 \# S^3$ connect summand is built in a different way for each $\calH_k$.

Note, however, that before applying a Hamiltonian perturbation, $\CSI(\calH_1)$ and $\CSI(\calH_3)$ have the same generators. To see this, we proceed similarly to the proof of stabilization invariance. Let $\calH_1$ and $\calH_3$ have genus $g+2$, with the attaching curves enumerated such that the first $g$ $\alpha$- and $\beta$-curves are the ones that are outside the handleswap region (and hence correspond the attaching curves in the genus $g$ diagram $\bar{\calH}$ for $Y$). Consider Lagrangian correspondences
\begin{align}
	L_{\alpha\alpha^\prime,1} = \left\{[\rho_1] \in \scrR_{g,3}, [\rho_2] \in \scrR_{g+2,3} ~\left|~ \begin{array}{l l}\rho_1(\alpha_k) = \rho_2(\alpha_k), & k = 1, \dots, g \\ \rho_1(\beta_k) = \rho_2(\beta_k), & k = 1, \dots, g \\ \rho_1(\mu_k) = \rho_2(\mu_k), & k = 1, 2, 3 \\ \rho_2(\alpha_{g+1}) = \rho_2(\alpha_{g+2}) = I & \end{array}\right\}\right.,
\end{align}
\begin{equation}
	L_{\beta^\prime\beta,1} = \left\{[\rho_1] \in \scrR_{g+2,3}, [\rho_2] \in \scrR_{g,3} ~\left|~ \begin{array}{l l}\rho_1(\alpha_k) = \rho_2(\alpha_k), & k = 1, \dots, g, \\ \rho_1(\beta_k) = \rho_2(\beta_k), & k = 1, \dots, g, \\ \rho_1(\mu_k) = \rho_2(\mu_k), & k = 1, 2, 3 \\ \rho_1(\beta_{g+1}) = \rho_1(\beta_{g+2}) = I & \end{array}\right\}\right.,
\end{equation}
\begin{align}
	L_{\alpha\alpha^\prime,3} = \left\{[\rho_1] \in \scrR_{g,3}, [\rho_2] \in \scrR_{g+2,3} ~\left|~ \begin{array}{l l}\rho_1(\alpha_k) = \rho_2(\alpha_k), & k = 1, \dots, g \\ \rho_1(\beta_k) = \rho_2(\beta_k), & k = 1, \dots, g \\ \rho_1(\mu_k) = \rho_2(\mu_k), & k = 1, 2, 3 \\ \rho_2(\alpha_{g+1}) = \rho_2(\alpha_{g+2}^\prime) = I & \\ \end{array}\right\}\right.,
\end{align}
\begin{equation}
	L_{\beta^\prime\beta,3} = \left\{[\rho_1] \in \scrR_{g+2,3}, [\rho_2] \in \scrR_{g,3} ~\left|~ \begin{array}{l l}\rho_1(\alpha_k) = \rho_2(\alpha_k), & k = 1, \dots, g \\ \rho_1(\beta_k) = \rho_2(\beta_k), & k = 1, \dots, g \\ \rho_1(\mu_k) = \rho_2(\mu_k), & k = 1, 2, 3 \\ \rho_1(\beta_{g+1}) = \rho_1(\beta_{g+2}^\prime) = I & \end{array}\right\}\right..
\end{equation}
Write $L_\alpha$ and $L_\beta$ for the Lagrangians in $\scrR_{g,3}$ coming from the diagram $\bar{\calH}$, and $L_{\alpha,k}$, $L_{\beta,k}$ for the Lagrangians in $\scrR_{g+2,3}$ coming from the diagram $\calH_k$ ($k = 1, 3$). Then, similarly to the situation for stabilizations, one has
\[
	L_{\alpha,k} = L_\alpha \circ L_{\alpha\alpha^\prime,k}, \quad\quad L_{\beta,k} = L_{\beta^\prime\beta,k} \circ L_\beta. \quad\quad\quad (k = 1, 3)
\]
Furthermore, $L_{\alpha\alpha^\prime,k} \circ L_{\beta^\prime\beta,k} = \Delta_{\scrR_{g,3}}$, and hence there are isomorphisms
\[
	\CF(L_\alpha, L_\beta) \cong \CF(L_\alpha, L_{\alpha\alpha^\prime,k}, L_{\beta^\prime\beta,k}, L_\beta) \cong \CF(L_{\alpha,k}, L_{\beta,k})
\]
just as in the proof of stabilization invariance. If $H: \scrR_{g,3} \longrightarrow \R$ is any Hamiltonian perturbation achieving transversality for $L_\alpha$ and $L_\beta$ in $\scrR_{g,3}$, then $H \times 0: \scrR_{g,3} \times \scrR_{g+2,3} \longrightarrow \R$ achieves transversality for $L_\alpha \times L_{\beta^\prime \beta,k}$ and $(L_{\alpha\alpha^\prime,k} \times L_\beta)^T$ in $\scrR_{g,3} \times \scrR_{g+2,3}$. In particular, any perturbed intersection point of $L_\alpha \times L_{\beta^\prime \beta,k}$ and $(L_{\alpha\alpha^\prime,k} \times L_\beta)^T$ has the form
\[
	[A_1, B_1, \dots, A_g, B_g, I, I, I, I, \bfi, \bfj, -\bfk]
\]	
for some $H$-perturbed intersection point $[A_1, B_1, \dots, A_g, B_g, \bfi, \bfj, -\bfk]$ of $L_\alpha$ and $L_\beta$. Now the fact that $\tau_\gamma^{-1} \circ \tau_{\beta_{g+1}} \circ \tau_{\alpha_{g+1}}$ induces the identity map on Floer chain groups is easy to see: each of these Dehn twists induces a fibered Dehn twist on $\scrR_{g+2,3}$ along a fibered coisotropic that misses all the perturbed intersection points (see Appendix \ref{sect:dehntwists}). It follows that the induced map on homology $\SI(\calH_3) \longrightarrow \SI(\calH_1)$ is the identity, so that the simple handleswap move induces the identity map $\SI(\calH_1) \longrightarrow \SI(\calH_1)$. \qedhere


\end{proof}

Since the conditions required by Juh\'asz and Thurston are satisfied by $\SI(\calH)$, it follows that symplectic instanton homology defines a \emph{natural} invariant of Heegaard diagrams of $3$-manifolds.

\begin{cor}
To any oriented $3$-manifold $Y$, symplectic instanton homology assigns a well-defined, specific group $\SI(Y)$ (not merely an isomorphism class of groups).
\label{cor:naturality}
\end{cor}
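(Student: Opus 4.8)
The plan is to invoke the Juh\'asz--Thurston criterion \cite[Definition 2.33]{HFnaturality}: an algebraic invariant of pointed Heegaard diagrams that is invariant under all pointed Heegaard moves descends to a well-defined group (not just an isomorphism class) provided four additional conditions hold. We have already established invariance under isotopies of attaching curves, handleslides (both inducing the \emph{identity} on the chain level), and stabilization (inducing a \emph{canonical} isomorphism), so $\SI(\calH)$ is an invariant of pointed Heegaard diagrams in the sense required. It therefore remains to verify the four Juh\'asz--Thurston conditions. The first three (roughly: the invariant is functorial for diffeomorphisms of diagrams, compatible with disjoint union / connected sum with the standard diagram of $S^3$, and the distinguished isomorphisms compose correctly) are immediate from the explicit, representation-theoretic definition of $L_\alpha$, $L_\beta$ and the canonical nature of the stabilization isomorphism in Theorem \ref{thm:stabilization-inv}, so I would dispatch them with a sentence each. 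The substantive fourth condition is invariance under the simple handleswap move, which is precisely Proposition \ref{prop:handleswap}.

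First I would recall the statement of the Juh\'asz--Thurston theorem in the form: if an invariant assigns to each pointed Heegaard diagram a group and to each pointed Heegaard move an isomorphism, these assignments being natural and satisfying the handleswap relation, then the colimit over the (connected) groupoid of Heegaard diagrams of a fixed $Y$ is a well-defined group $\SI(Y)$, canonically independent of choices. Then I would check the three easy conditions: naturality under diffeomorphisms holds because a diffeomorphism $\Sigma_g \to \Sigma_g$ carrying one diagram to another induces a symplectomorphism of $\M_{g,3}$ carrying $L_\alpha, L_\beta$ to the corresponding Lagrangians (this uses that mapping classes act by symplectomorphisms, already invoked in the proofs of Theorems \ref{thm:lagrangianS3} and \ref{thm:stabilization-inv}); the connected-sum/stabilization compatibility is built into how the stabilization isomorphism was constructed via embedded geometric composition; and the cocycle/compatibility condition among the distinguished isomorphisms follows because all the isomorphisms in question are either identities on the chain level or the canonical geometric-composition isomorphisms of quilted Floer theory, which compose functorially \cite{quilts}. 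Finally I would cite Proposition \ref{prop:handleswap} for the handleswap condition, and conclude that all hypotheses of the Juh\'asz--Thurston machine are met, hence $\SI(Y)$ is a well-defined group.

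The main obstacle here is not in this corollary itself — which is essentially a citation-and-bookkeeping argument once Proposition \ref{prop:handleswap} is in hand — but in being careful about exactly which form of the Juh\'asz--Thurston criterion one uses and making sure our setting (quilted Floer homology with its canonical geometric-composition isomorphisms, rather than Heegaard Floer homology with its triangle-map isomorphisms) genuinely fits their axiomatic framework. In particular one should confirm that the "distinguished isomorphisms" attached to stabilization are canonical and that no monodromy can sneak in through the choice of Hamiltonian perturbations needed for transversality; the cleanest way to see the latter is that any two such perturbations are joined by a path, giving a canonical continuation isomorphism, so the perturbation data does not enlarge the relevant groupoid. Modulo that care, the corollary follows formally.
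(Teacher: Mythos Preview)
Your proposal is correct and follows exactly the paper's approach: the corollary is deduced immediately from the Juh\'asz--Thurston criterion \cite{HFnaturality}, with the first three conditions dismissed as trivial and the fourth supplied by Proposition~\ref{prop:handleswap}. If anything, you supply more detail than the paper does---the paper simply asserts the first three conditions are ``trivial to check'' and states the corollary without further argument---so your additional remarks about continuation isomorphisms and the functoriality of geometric-composition maps are a welcome elaboration rather than a deviation.
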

\section{Symplectic Instanton Homology via Cerf Decompositions}

\label{chap:SI-cerf}

In Sections 3-5, we defined our symplectic instanton homology entirely in terms of a Heegaard splitting and proved it is an invariant of $3$-manifolds. However, working only with Heegaard splittings makes proving certain properties (the K\"unneth principle, well-definedness of cobordism maps, the surgery exact trangle) more difficult than necessary. For this reason, we will extend the definition of symplectic instanton homology to more general handlebody decompositions called Cerf decompositions, using the ``Floer field theory'' approach of Wehrheim and Woodward \cite{floerField, floerFieldCoprime, floerFieldPhil}.

\subsection{Cerf Decompositions}

Suppose $X_-$ and $X_+$ are two closed, oriented $n$-dimensional smooth manifolds. A {\bf bordism} from $X_-$ to $X_+$ is a pair $(Y,\phi)$ consisting of a compact, oriented $(n+1)$-dimensional smooth manifold $Y$ along with an orientation-preserving diffeomorphism $\phi: \partial Y \longrightarrow \overline{X}_- \amalg X_+$. We say that two bordisms $(Y, \phi)$ and $(Y', \phi')$ from $X_-$ to $X_+$ are {\bf equivalent} if there exists an orientation-preserving diffeomorphism $\psi: Y \longrightarrow Y'$ such that $\phi \circ \psi|_{\partial Y} = \phi$. We may form the {\bf connected bordism category} $\Bord^0_{n+1}$ whose objects are closed, oriented, connected smooth $n$-manifolds and whose morphisms are equivalence classes of $(n+1)$-dimensional compact, oriented, connected bordisms.

In order to break up bordisms into basic pieces, we bring Morse theory into the picture. A {\bf Morse datum} for the bordism $(Y, \phi)$ is  pair $(f, \underline{t})$ consisting of a Morse function $f: Y \longrightarrow \R$ and a strictly increasing list of real numbers $\underline{t} = (t_0, \dots, t_m)$ satisfying the following conditions:
\begin{itemize}
	\item[(i)] $\min f(y) = t_0$ and $f^{-1}(t_0) = \phi^{-1}(X_-)$ (\emph{i.e.} the minimum of $f$ is $t_0$ and this minimum is attained at all points of the incoming boundary of $Y$ and nowhere else), and similarly the $\max f(y) = t_m$ and $f^{-1}(t_m) = \phi^{-1}(X_+)$.
	\item[(ii)] $f^{-1}(t)$ is connected for all $t \in \R$.
	\item[(iii)] Critical points and critical values of $f$ are in one-to-one correspondence, \emph{i.e.} $f: \Crit(f) \longrightarrow f(\Crit(f))$ is a bijection.
	\item[(iv)] $t_0, \dots, t_m$ are all regular values of $f$ and each interval $(t_{k-1}, t_k)$ contains at most one critical point of $f$.
\end{itemize}

A bordism $(Y, \phi)$ is an {\bf elementary bordism} if it admits a Morse datum with at most one critical point. If $(Y,\phi)$ admits a Morse datum with no critical points, then it is called a {\bf cylindrical bordism}. Due to the correspondence between critical points of Morse functions and handle attachments, we see that an elementary bordism is a bordism arising from the attachment of at most one handle to $\partial Y_- \times [0,1]$.

We will give a special name to decompositions of $(Y,\phi)$ into elementary bordisms. A {\bf Cerf decomposition} of $(Y, \phi)$ is a decomposition
\[
	Y = Y_1 \cup_{X_1} Y_2 \cup_{X_2} \cdots \cup_{X_{m-1}} Y_m
\]
of $Y$ into a sequence of elementary bordisms embedded in $Y$ such that
\begin{itemize}
	\item[(i)] Each $X_k$ is connected and nonempty, and the $X_k$ are pairwise disjoint.
	\item[(ii)] The interiors of the $Y_k$ are disjoint, and $Y_k \cap \partial Y = \varnothing$ if $k \neq 1, m$.
	\item[(iii)] $Y_1 \cap \partial Y = \phi^{-1}(X_-)$, $Y_m = \phi^{-1}(X_+)$, and $Y_k \cap Y_{k+1} = X_k$.
\end{itemize}
Certainly any Morse datum induces a Cerf decomposition, and given a Cerf decomposition a compatible Morse function can be constructed.

We say that two Cerf decompositions
\[
	Y = Y_1 \cup_{X_1} Y_2 \cup_{X_2} \cdots \cup_{X_{m-1}} Y_m,
\]
\[
	Y = Y_1^\prime \cup_{X_1^\prime} Y_2^\prime \cup_{X_2^\prime} \cdots \cup_{X_{m-1}^\prime} Y_{m}^\prime
\]
are {\bf equivalent} if there are orientation-preserving diffeomorphisms $\psi_k: Y_k \longrightarrow Y_k^\prime$ ($k = 1, \dots, m$) such that $\psi_1$ and $\psi_m$ restrict to the identity on $\partial Y$. Note that this definition requires the number of elementary pieces in each decomposition to be the same, but we can always increase $m$ by inserting cylindrical (\emph{i.e.} trivial) bordisms anywhere in the decomposition.

In applications, we will really only care about bordisms up to equivalence. Accordingly, we should define a Cerf decomposition of an equivalence class $[(Y,\phi)]$.\hide{ Call an equivalence class $[(Y,\phi)]$ {\bf elementary} (respectively {\bf cylindrical}) if one (and hence all) of its representatives is an elementary (respectively cylindrical) bordism. Then a} A {\bf Cerf decomposition} of an equivalence class $[(Y,\phi)] \in \Hom_{\Bord_{n+1}^0}(X_-, X_+)$ is a factorization
\[
	[(Y,\phi)] = [(Y_1,\phi_1)] \circ \cdots \circ [(Y_m,\phi_m)]
\]
where each $[(Y_k, \phi_k)] \in \Hom_{\Bord_{n+1}^0}(X_k, X_{k+1})$ is elementary. Note that the $X_k$'s do not appear in the notation for the Cerf decomposition; they are usually understood by context.

Given two Cerf decompositions
\[
	[(Y,\phi)] = [(Y_1,\phi_1)] \circ \cdots \circ [(Y_m,\phi_m)],
\]
\[
	[(Y,\phi)] = [(Y_1^\prime,\phi_1^\prime)] \circ \cdots \circ [(Y_m^\prime,\phi_m^\prime)]
\]
of an equivalence class $[(Y,\phi)]$, we say they are {\bf equivalent} if there exist orientation-preserving diffeomorphisms $\psi_k: X_k \longrightarrow X_k^\prime$ ($k = 0, \dots, m$, where $X_0 = X_-$ and $X_m = X_+$) such that $\psi_0 = \Id_{X_-}$, $\psi_m = \Id_{X_+}$, and
\[
	[(Y_k, \phi_k)] = [(Y_k^\prime, (\psi_{k-1} \amalg \psi_k) \circ \phi_k^\prime)] \text{ for all } k = 0, \dots, m.
\]

We know we can find a Morse datum for any bordism $(Y,\phi)$ and a Morse datum is equivalent to a Cerf decomposition for $(Y,\phi)$. We would like to define invariants of $(Y, \phi)$ by picking a Morse datum, defining the invariants for the elementary bordisms  appearing in the associated Cerf decomposition, and then ``gluing together'' the invariants of the elementary bordisms to obtain an invariant for $(Y,\phi)$. Certainly we want such an invariant to only depend on the equivalence class $[(Y,\phi)] \in \Hom_{\Bord_{n+1}^0}(X_-, X_+)$. To this end, we would like to understand exactly how two different Cerf decompositions of a given equivalence class can differ.

\begin{defn}
(Cerf Moves) Let $[(Y,\phi)] \in \Hom_{\Bord_{n+1}^0}(X_-, X_+)$ be an equivalence class of bordisms and suppose we have a Cerf decomposition
\[
	[(Y,\phi)] = [(Y_1, \phi_1)] \circ \cdots \circ [(Y_m,\phi_m)].
\]
By a {\bf Cerf move} we mean one of the following modifications made to $[(Y_1, \phi_1)] \circ \cdots \circ [(Y_m,\phi_m)]$ (below we omit the boundary parametrizations $\phi_k$ to simplify notation):
\begin{itemize}
	\item[(a)] ({\bf Critical point cancellation}) Replace $\cdots \circ [Y_k] \circ [Y_{k+1}] \circ \cdots$ with $\cdots \circ [Y_k \cup Y_{k+1}] \circ \cdots$ if $[Y_k \cup Y_{k+1}]$ is a cylindrical bordism.
	\item[(b)] ({\bf Critical point switch}) Replace $\cdots \circ [Y_k] \circ [Y_{k+1}] \circ \cdots$ with $\cdots \circ [Y_k^\prime] \circ [Y_{k+1}^\prime] \circ \cdots$, where $Y_k$, $Y_{k+1}$, $Y_k^\prime$, and $Y_{k+1}^\prime$ satisfy the following conditions: $Y_k \cup Y_{k+1} \cong Y_k^\prime \cup Y_{k+1}^\prime$, and for some choice of Morse data $(f, \underline{t})$, $(f^\prime, \underline{t}^\prime)$ inducing the two Cerf decompositions and a metric on $Y$, the attaching cycles for the critical points $y_k$ and $y_{k+1}$ of $f$ in $X_k$ and $y_k^\prime$ and $y_{k+1}^\prime$ of $f^\prime$ in $X_k^\prime$ are disjoint; in $X_{k-1} = X_{k-1}^\prime$, the attaching cycles of $y_k$ and $y_{k+1}^\prime$ are homotopic; and in $X_{k+1} = X_{k+1}^\prime$ the attaching cycles of $y_{k+1}$ and $y_k^\prime$ are homotopic. See Figure \ref{fig:critptswitch} for an example of this move.
	\item[(c)] ({\bf Cylinder creation}) Replace $\cdots \circ [Y_k] \circ 
	\cdots$ with $\cdots \circ [Y_k^\prime] \circ [Y_k^{\prime\prime}] \circ \cdots$ where $Y_k \cong Y_k^\prime \cup Y_k^{\prime\prime}$ and one of $[Y_k^\prime]$, $[Y_k^{\prime\prime}]$ is cylindrical.
	\item[(d)] ({\bf Cylinder cancellation}) Replace $\cdots \circ [Y_k] \circ [Y_{k+1}] \circ \cdots$ with $\cdots \circ [Y_k \circ Y_{k+1}] \circ \cdots$ whenever one of $[Y_k]$, $[Y_{k+1}]$ is cylindrical.
\end{itemize}
\end{defn}

\begin{figure}[h]
	\centering
	\includegraphics[scale=.75]{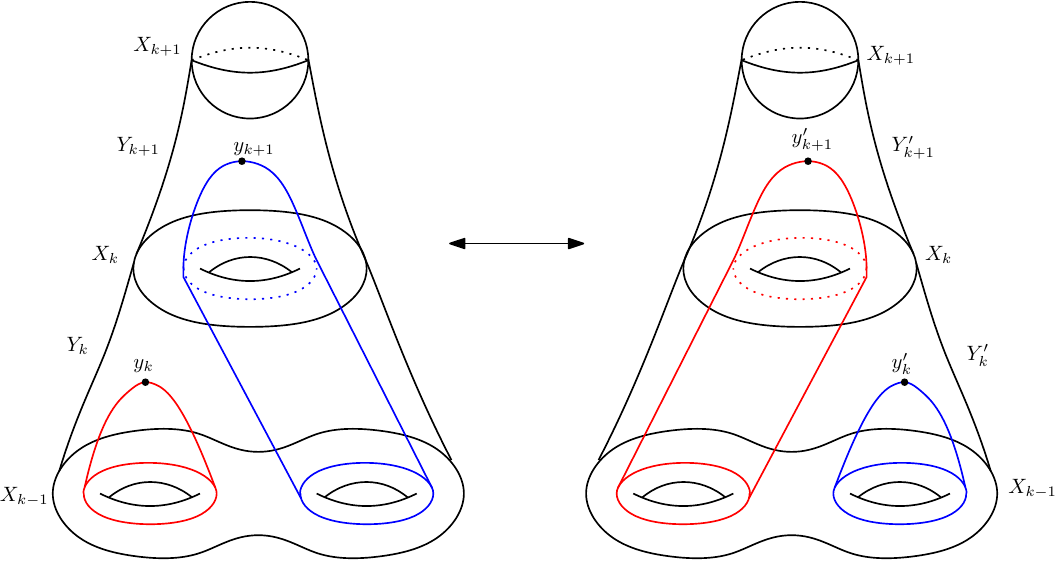}
	\caption{A $3$-dimensional critical point switch of two $2$-handles.}
	\label{fig:critptswitch}
\end{figure}

\begin{thm}
\label{thm:cerftheory}
\textup{(Theorem 3.4 of \cite{conn-cerf})} If $[(Y, \phi)]$ is a connected bordism of dimension at least three, then up to equivalence, any two Cerf decompositions of $[(Y,\phi)]$ are related by a finite sequence of Cerf moves.
\end{thm}

The main use of Theorem \ref{thm:cerftheory} for us is the following. Let $\mathcal{C}$ be some category. Suppose we wish to define a ``$\mathcal{C}$-valued connected field theory'' for $(n+1)$-dimensional bordisms ($n \geq 2$), \emph{i.e.} a functor $\mathcal{F}: \Bord_{n+1}^0 \longrightarrow \mathcal{C}$. If we can define $\mathcal{F}$ on all objects (closed, connected, oriented $n$-manifolds) and all \emph{elementary} $(n+1)$-dimensional bordisms in such a way that $\mathcal{F}$ has the same value on compositions of elementary bordisms differing by Cerf moves, then from Theorem \ref{thm:cerftheory} it follows that this partially defined functor $\mathcal{F}$ uniquely extends to a functor $\mathcal{F}: \Bord_{n+1}^0 \longrightarrow \mathcal{C}$.

\subsection{Symplectic Instanton Homology via Lagrangian Correspondences}

We now precisely describe a way to add a ``trivial $k$-stranded tangle'' to a Cerf decomposition. Let $(Y^{n+1},\phi)$ be a bordism with $\partial Y_-, \partial Y_+ \neq \varnothing$ and fix a Morse datum $(f, \underline{t})$ for $Y$. Let $(Y_0, \dots, Y_m)$ denote the Cerf decomposition for $Y$ induced by $(f, \underline{t})$. Choose $k$ points $x_1, \dots, x_k \in D^n$ and let $S_{[a,b]} = (D^n - \{x_1, \dots, x_k\}) \times [a,b]$. $S_{[a,b]}$ should be thought of as (the complement of) a trivial $k$-stranded tangle in the cylinder $D^n \times [a,b]$. We wish to have a way to insert copies of $S_{[a,b]}$ into $(Y_0, \dots, Y_m)$.

Let $z \in \text{int}(Y)$ be a point contained in a gradient flow line of $f$ connecting $\partial Y_-$ to $\partial Y_+$. Denote this gradient flow line by $\gamma_z$. Then we may form a new Cerf decomposition $(Y_0 \#_z S_{[t_0,t_1]}, \allowbreak\dots, Y_m \#_z S_{[t_{m},t_{m+1}]})$, where $Y_j \#_z S_{[t_{j},t_{j+1}]}$ denotes the result of removing a neighborhood of $\gamma_z \cap Y_j$ and gluing $S_{[t_j,t_{j+1}]}$ back in its place. $(Y_0 \#_z S_{[t_0,t_1]}, \dots,\allowbreak Y_m \#_z S_{[t_{m},t_{m+1}]})$ is then a Cerf decomposition describing the complement of an unknotted $k$-stranded tangle connecting $\partial Y_-$ and $\partial Y_+$.

A picture is more illuminating than the construction in the previous paragraph; see Figure \ref{fig:trivialtangle} for an example of adding a trivial $3$-stranded tangle to a $3$-dimensional bordism.

\begin{figure}[h]
	\centering
	\includegraphics[scale=.75]{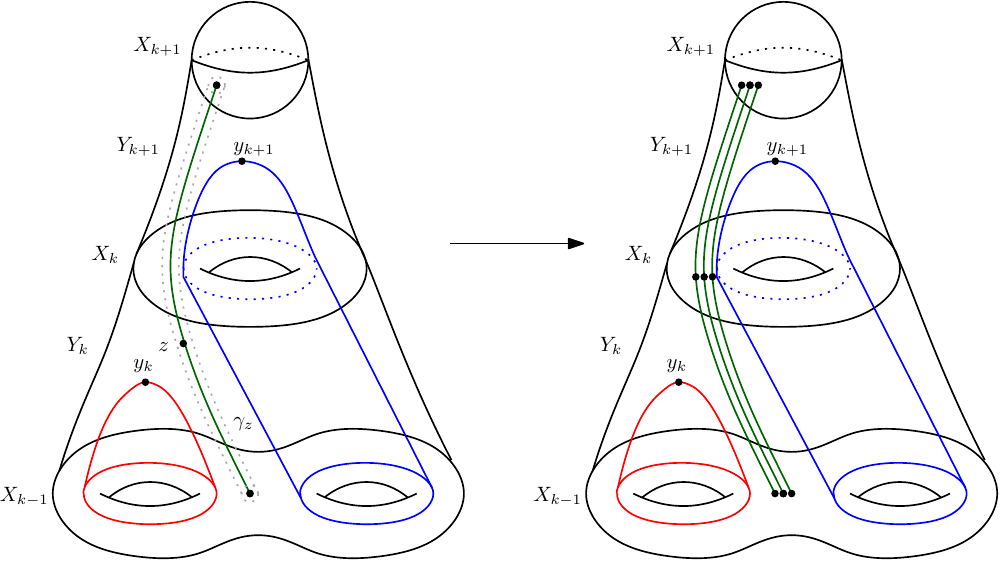}
	\caption{Adding a trivial $3$-stranded tangle to a $3$-dimensional Cerf decomposition of a genus $2$ handlebody minus a solid ball.}
	\label{fig:trivialtangle}
\end{figure}

To apply the above construction to a closed, oriented $3$-manifold $Y$, choose two $3$-balls $B_-$, $B_+$ in $Y$ and consider $Y' = Y - (B_- \amalg B_+)$ as a bordism from $\partial Y_- = \partial B_-$ to $\partial Y_+ = \partial B_+$. We may then choose a Morse datum $(f, \underline{t})$ for $Y'$ with associated Cerf decomposition $(Y_0^\prime, \dots, Y_m^\prime)$ and perform the above construction to obtain a Cerf decomposition $(Y_0^\prime \#_z S_{[t_0,t_1]}, \dots, Y_m^\prime \#_z S_{[t_m,t_{m+1}]})$ which describes $Y'$ with a trivial $k$-stranded tangle connecting $\partial B_-$ to $\partial B_+$. To simplify the notation, we will write $(Y_1^\theta, \dots, Y_m^\theta)$ for this Cerf decomposition of $Y'$ with an open regular neighborhood of the trivial $k$-stranded tangle removed\hide{(the $\theta$ indicates that $Y'$ is $Y$ with an open regular neighborhood of a theta graph removed)}. Note that each $Y_j^\theta$ has three boundary strata: the {\bf negative boundary} $\partial_- Y_j^\theta = X_{j-1} \setminus \{\text{3 open disks}\}$, the {\bf positive boundary} $\partial_+ Y_j^\theta = X_j \setminus \{\text{3 open disks}\}$, and the {\bf vertical boundary} $\partial_\text{vert} Y_j^\theta = \partial \nu T \cap Y_j^\theta$, where $T$ is the trivial $3$-stranded tangle we have constructed.

Now we bring moduli space of $\SU(2)$-representations into the picture. We fix the number of strands in our trivial tangle (denoted by $k$ above) to be $3$. For each $j$, we may define the moduli space $\scrR(Y_j^\theta)$ of conjugacy classes of $\SU(2)$-representations of $\pi_1(Y_j^\theta)$ which send the meridians of the trivial $3$-stranded tangle to the conjugacy class of $\SU(2)$ consisting of traceless matrices.

If $\iota_j: \partial Y_j^\theta \hookrightarrow Y_j^\theta$ denotes the inclusion of the boundary and $\phi_j: \partial Y_j^\theta \longrightarrow \bar{X}_{j,-} \amalg X_{j,+}$ denotes the parametrization of $\partial Y_j^\theta$ (which we have been suppressing from our notation), we can define a map 
\[
	\scrR(Y_j^\theta) \longrightarrow \scrR_{g(X_{j,-}),3} \times \scrR_{g(X_{j,+}),3},
\]
\[
	[\rho] \mapsto [\rho \circ \iota_{\#} \circ \phi_{\#}^{-1}].
\]
Denoting the image of this map by $L(Y_j^\theta)$, we have the following.

\begin{prop}
\label{prop:cerf-lagrangian}
$L(Y_j^\theta)$ is a smooth Lagrangian submanifold of $\scrR_{g(X_{j,-}),3}^- \times \scrR_{g(X_{j,+}),3}$, \emph{i.e.} it defines a Lagrangian correspondence between the moduli spaces of the boundary components of $Y_j^\theta$.
\end{prop}

\begin{proof}
This is easiest to see in terms of the gauge theoretic description of the moduli spaces. An element of $\scrR_{g,3}$ can be considered as a gauge equivalence class of $\SU(2)$-connections $[A]$ on $\Sigma_{g,3}$, where the holonomy of $A$ around any of the boundary components is a traceless $\SU(2)$ matrix. With this interpretation, $L(Y_j^\theta) \subset \scrR_{g(X_{j,-}),3}^- \times \scrR_{g(X_{j,+}),3}$ consists of pairs of $\SU(2)$-connections $([A^-], [A^+])$ which simultaneously extend to an $\SU(2)$-connection $[A]$ on $Y_j \setminus \nu T$ that has traceless holonomy around any of the strands of the trivial $3$-stranded tangle $T$. Here, $\nu T$ denotes a \emph{closed} tubular neighborhood of $T$, so that $Y_j \setminus \nu T$ is noncompact.

The tangent space $T_{[A]} \scrR_{g,3}$ may be identified with
\[
	\im(H^1_c(\Sigma_{g,3}; \su(2)_A) \longrightarrow H^1(\Sigma_{g,3}; \su(2)_A)),
\]
and the symplectic form $\omega$ is given by the familiar formula
\[
	\omega_{[A]}(\alpha, \beta) = \frac{1}{4\pi^2}\int_{\Sigma_g^\theta} \Tr (\alpha \wedge \beta).
\]
Similarly, we have
\[
	T_{[A]} \scrR(Y_j^\theta) = \im(H^1_c(Y_j \setminus \nu T; \su(2)_A) \longrightarrow H^1(Y_j \setminus \nu T; \su(2)_A)).
\]

Let $\tilde{\alpha}$, $\tilde{\beta}$ be two tangent vectors to the traceless character variety for $Y_j^\theta$ at a point $[A]$, and denote their images in $L(Y_j^\theta)$ by $\alpha$, $\beta$, elements of the tangent space at the connection $([A^-],[A^+])$, where $[A^\pm]$ are the boundary values of $[A]$. By Stokes' theorem,
\begin{align*}
	0 & = \int_{Y_j^\theta} d\langle \tilde{\alpha} \wedge \tilde{\beta} \rangle \\
	 & = \int_{X_{j,+}^\theta} \langle \alpha \wedge \beta \rangle - \int_{X_{j,-}^\theta} \langle \alpha \wedge \beta \rangle + \int_{\partial \nu T} \langle \tilde{\alpha} \wedge \tilde{\beta} \rangle \\
	 & = 4\pi^2\omega_{([A_-], [A_+])}(\alpha, \beta) + \int_{\partial \nu T} \langle \tilde{\alpha} \wedge \tilde{\beta} \rangle.
\end{align*}
The last term on the right is zero, since the forms $\tilde{\alpha}$, $\tilde{\beta}$ are compactly supported in $Y_j \setminus \nu T$. Therefore $L(Y_j^\theta)$ is isotropic in $\scrR_{g(X_{j,-}),3}^- \times \scrR_{g(X_{j,+}),3}$. It is also clearly half-dimensional, so that it is a Lagrangian.
\end{proof}

We wish to define the symplectic instanton homology of the Cerf decomposition $(Y_1^\theta, \dots, Y_m^\theta)$ as the quilted Floer homology $\HF(L(Y_1^\theta), \dots, L(Y_m^\theta))$. Since the Lagrangians $L(Y_k^\theta)$ have minimal Maslov number $2$, we will need to ensure that contributions of disk bubbles to $\partial^2$ cancel out. We show this in Theorem \ref{thm:cerf-disk-invariant}. First, it is useful to have a description of the effect of gluing cobordisms on the Lagrangian correspondences.

\begin{prop}
\label{prop:lag-composition}
Composition of bordisms corresponds to geometric composition (not necessarily embedded) of the associated Lagrangian correspondences:
\[
	L(Y_k^\theta \circ Y_{k+1}^\theta) = L(Y_k^\theta) \circ L(Y_{k+1}^\theta).
\]
\end{prop}

\begin{proof}
By the Seifert-Van Kampen theorem, any representation of $\pi_1(Y_k^\theta \circ Y_{k+1}^\theta)$ induces representations of $\pi_1(Y_k^\theta)$ and $\pi_1(Y_{k+1}^\theta)$ whose restrictions to $\pi_1(X_k^\theta)$ agree, and therefore an element of the geometric composition $L(Y_k^\theta) \circ L(Y_{k+1}^\theta)$. On the other hand, an element of $L(Y_k^{\theta}) \circ L(Y_{k+1}^\theta)$ consists of a pair of representations $[\rho_k] \in L(Y_k^\theta)$ and $[\rho_{k+1}] \in L(Y_{k+1}^\theta)$ whose restrictions to $\pi_1(X_k^\theta)$ are conjugate. Two such representations glue together to a representation of $\pi_1(Y_k^\theta \circ Y_{k+1}^\theta)$, unique up to conjugation, by fixing representatives for $[\rho_k]$ and $[\rho_{k+1}]$ and conjugating the second one to exactly agree with the first on $\pi_1(X_k^\theta)$.
\end{proof}

With the above in place, we now prove that the disk invariants of the Lagrangians associated to a Cerf decomposition cancel out.

\begin{thm}
\label{thm:cerf-disk-invariant}
For any Cerf decomposition of the form $(Y_1^\theta, \dots, Y_m^\theta)$ and generic compatible almost complex structures $J_k \in \J(X_k^\theta)$, the disk invariants of the associated Lagrangian correspondences satisfy
\[
	\Phi_{J_0 \oplus J_1}(L(Y_1^\theta), x_1) + \cdots + \Phi_{J_{m-1} \oplus J_m}(L(Y_m^\theta),x_m) = 0.
\]
\end{thm}

\begin{proof}
For each $k = 1, \dots, m$, $Y_k^\theta$ must be a $1$-handle attachment, a $2$-handle attachment, or a cylinder. Without loss of generality, we may assume there are no cylinders, since by \cite[Theorem 5.4.1]{quilts}, disk numbers are additive under geometric composition, $\Phi(L \circ L') = \Phi(L) + \Phi(L')$, and cylinders induce the diagonal correspondence. Furthermore, the number of $1$-handles must equal the number of $2$-handles, since the bordism starts and ends at the same surface.

Similarly to the proof of Proposition \ref{prop:disk-inv-equal}, if $L, L': \scrR_{g,3} \longrightarrow \scrR_{g-1,3}$ correspond to $2$-handle attachments, then $\Phi(L) = \Phi(L')$ because there is a symplectic automorphism of $\scrR_{g,3}^- \times \scrR_{g-1,3}$ (induced by a diffeomorphism of the $2$-handle cobordism $\Sigma_{g,3} \longrightarrow \Sigma_{g-1,3}$ taking $L$ to $L'$. Dually, a correspondence $L''$ coming from a $1$-handle attachment is the transpose of the correspondence associated to the $2$-handle attachment obtained by turning the $1$-handle cobordism ``upside down.'' This implies that $\Phi(L'')$ is the negative of the disk invariant of any $2$-handle correspondence. As noted in the previous paragraph, there are an equal number of $1$- and $2$-handles, so the total sum of disk invariants is therefore zero, as desired.
\end{proof}

Now, given a Cerf decomposition of the form $(Y_1^\theta, \dots, Y_m^\theta)$, we define its {\bf symplectic instanton chain group} to be the quilted Floer chain group
\[
	\CSI(Y_1^\theta, \dots, Y_m^\theta) = \CF(L(Y_1^\theta), \dots, L(Y_m^\theta)).
\]
For a generic $(m+1)$-tuple of compatible almost complex structures $\underline{J} = (J_0, \dots, J_m) \in \J(X_0) \times \cdots \times \J(X_m)$, we have a quilted Floer differential $\partial_{\underline{J}}$ on $\CSI(Y_1^\theta, \dots, Y_m^\theta)$. Studying the ends of the moduli space of Maslov index $2$ trajectories as usual, we find that
\[
	\partial_{\underline{J}}^2 = (\Phi_{J_0 \oplus J_1}(L(Y_1^\theta)) + \cdots + \Phi_{J_{m-1} \oplus J_m}(L(Y_m^\theta))\mathrm{Id}.
\]
Therefore by Theorem \ref{thm:cerf-disk-invariant} we have that $\partial_{\underline{J}}^2 = 0$, and we have proven the following:

\begin{thm}
$(\CSI(Y_1^\theta, \dots, Y_m^\theta), \partial_{\underline{J}})$ is a chain complex.
\end{thm}

We would like to show that the homology of $(\CSI(Y_1^\theta, \dots, Y_m^\theta), \partial_{\underline{J}})$ is an invariant of the equivalence class of $(Y_1, \dots, Y_m)$ (the original Cerf decomposition of the closed, oriented $3$-manifold $Y$), and hence an invariant of the original $3$-manifold $Y$. This is indeed the case. In fact, we will show that $(L(Y_1^\theta), \dots, L(Y_m^\theta)) \in \Hom_\Symp(\scrR_{0,3}, \scrR_{0,3})$ depends only on $Y$.

\begin{thm}
\textup{(Invariance under Cerf moves)} Given a closed, oriented $3$-manifold $Y$ and any Cerf decomposition of the form $(Y_1^\theta, \dots, Y_m^\theta)$, the generalized Lagrangian correspondence
\[
	(L(Y_1^\theta), \dots, L(Y_m^\theta)) \in \Hom_\Symp(\scrR_{0,3}, \scrR_{0,3})
\]	
depends only on the diffeomorphism type of $Y$. More precisely, $(L(Y_1^\theta), \dots, L(Y_m^\theta))$ is invariant under Cerf moves applied to $(Y_1^\theta, \dots, Y_m^\theta)$.
\label{thm:cerfinvariance}
\end{thm}

\begin{proof}
We simply compare the Lagrangian correspondences for Cerf decompositions differing by a single Cerf move. Note that the trivial $3$-stranded tangle in the decomposition is not changed in any of the Cerf moves; we can consider all moves as coming from moves on the Cerf decomposition $(Y_1, \dots, Y_m)$ of the closed $3$-manifold $Y$.
\begin{itemize}
	\item[(a)] \emph{Critical Point Cancellation:} Invariance under critical point cancellation follows immediately from Proposition \ref{prop:lag-composition}.
	\item[(b)] \emph{Critical Point Switch:} Let $Y_k^\theta$, $Y_{k+1}^\theta$ be two consecutive bordisms in the Cerf decomposition satisfying the necessary conditions for a critical point switch to be performed, and let $(Y_k^\theta)^\prime$, $(Y_{k+1}^\theta)^\prime$ be the bordisms that replace $Y_k^\theta$, $Y_{k+1}^\theta$ after the critical point switch. It is apparent that the geometric compositions $L(Y_k^\theta) \circ L(Y_{k+1}^\theta)$ and $L((Y_k^\theta)^\prime) \circ L((Y_{k+1}^\theta)^\prime)$ are embedded and equal to one another, since $Y_k^\theta \circ Y_{k+1}^\theta \cong (Y_k^\theta)^\prime \circ (Y_{k+1}^\theta)^\prime$ by assumption. Therefore we have the following equivalences in $\Symp$:
\begin{align*}
	(\dots, L(Y_k^\theta), L(Y_{k+1}^\theta), \dots) & = (\dots, L(Y_k^\theta) \circ L(Y_{k+1}^\theta), \dots) \\
	 & = (\dots, L((Y_k^\theta)^\prime) \circ L((Y_{k+1}^\theta)^\prime), \dots) \\
	 & = (\dots, L((Y_k^\theta)^\prime), L((Y_{k+1}^\theta)^\prime), \dots),
\end{align*}
	which shows that a critical point switch does not change the morphism in $\Symp$ represented by the Cerf decomposition.
	\item[(c)] \emph{Cylinder Creation:} Suppose we replace $Y_k^\theta$ with $(Y_k^\theta)^\prime, (Y_k^\theta)^{\prime\prime}$ satisfying $(Y_k^\theta)^\prime \cup (Y_k^\theta)^{\prime\prime} \cong Y_k^\theta$, where without loss of generality we assume that $(Y_k^\theta)^\prime$ is cylindrical. Then $(Y_k^\theta)^{\prime\prime} \cong Y_k^\theta$, and the Lagrangian correspondence for $(Y_k^\theta)^\prime$ is the diagonal $\Delta_{\scrR(X_{k}^\theta)}: \scrR(X_{k}^\theta)^- \times \scrR(X_{k}^\theta)$. Then
\begin{align*}
	L((Y_k^\theta)^\prime \circ (Y_k^\theta)^{\prime\prime}) & = L((Y_k^\theta)^\prime) \circ L((Y_k^\theta)^{\prime\prime}) \\
	 & = \Delta_{\scrR(X_{k}^\theta)} \circ L(Y_k^\theta) \\
	 & = L(Y_k^\theta).
\end{align*}
	\item[(d)] \emph{Cylinder Cancellation:} If $Y_k^\theta$ is cylindrical, then similar to the above case,
\[
	L(Y_k^\theta \circ Y_{k+1}^\theta) = L(Y_k^\theta) \circ L(Y_{k+1}^\theta) = \Delta_{\scrR(X_k^\theta)} \circ L(Y_{k+1}^\theta) = L(Y_{k+1}^\theta). \qedhere
\]
\end{itemize}
\end{proof}

As a result of Theorem \ref{thm:cerfinvariance}, the homology
\[
	\SI(Y) := H_\ast(\CSI(Y_1^\theta, \dots, Y_m^\theta), \partial_{\underline{J}})
\]
depends only on the (oriented) diffeomorphism type of $(Y,z)$. We call $\SI(Y)$ the {\bf symplectic instanton homology} of $Y$ (again, we tend to ignore the basepoint in the notation). In fact, we have shown that the equivalence class $(L(Y_1^\theta), \dots, L(Y_m^\theta)) \in \Hom_\Symp(\scrR_{0,3}, \scrR_{0,3})$ is an invariant of $(Y,z)$, but we will not study the $\Symp$-valued invariant any further in this article.

Note that a pointed Heegaard diagram $\calH = (\Sigma_g, \bfalpha, \bfbeta, z)$ induces a  Cerf decomposition $Y = (Y_1, \dots, Y_{2g})$, where $Y_k$ corresponds to $1$-handle attachments determined by $\alpha_k$ ($1 \leq k \leq g$) and $Y_{g + k}$ corresponds to $2$-handle attachments determined by $\beta_k$. It is easily checked that for $1 \leq k \leq g$, the geometric compositions $L(Y_1 \circ \cdots \circ Y_{k-1}) \circ L(Y_k)$ and $L(Y_{g+1} \circ \cdots \circ Y_{g+k-1}) \circ L(Y_{g+k})$ are embedded and, when $k = g$, are respectively equal to $L_\alpha$, $L_\beta$, so that
\[
	\SI(Y) = H_\ast(\CSI(Y_1^\theta, \dots, Y_{2g}^\theta)) = H_\ast(\CSI(L_\alpha, L_\beta)),
\]
\emph{i.e.} the definition of symplectic instanton homology in terms of Heegaard diagrams is a special case of the definition using Cerf decompositions.

\hide{
\begin{rem}
As mentioned in the Introduction, there is another object in the literature called ``symplectic instanton homology'' which is also a finitely generated group $\HSI(Y)$ associated to a closed, oriented $3$-manifold, originally constructed by Manolescu and Woodward \cite{manolescu-woodward}. In Section \ref{sect:comparison} we will prove that $\SI(Y) \cong \HSI(Y)$.
\end{rem}
}
\section{Symplectic Instanton Homology of Nontrivial $\SO(3)$-Bundles}

The symplectic instanton homology $\SI(Y)$ can be thought of as using the trivial $\SU(2)$-bundle on $Y$ in its definition. More generally, one may wish to define symplectic instanton homology using a nontrivial $\SO(3)$-bundle on $Y$, and it will indeed be necessary to have such a generalization in order to state the surgery exact triangle for symplectic instanton homology.

\subsection{Moduli Spaces of Flat $\SO(3)$-Bundles}

On a compact, orientable manifold $Y$ of dimension at most $3$, principal $\SO(3)$-bundles $P \longrightarrow Y$ are classified by their second Stiefel-Whitney class $w_2(P) \in H^2(Y,\partial Y;\F_2)$. In particular, if $Y$ is a compact, orientable $3$-manifold, then by Poincar\'e-Lefschetz duality an $\SO(3)$-bundle $P \longrightarrow Y$ is classified by the homology class $\omega = \mathrm{PD}(w_2(P)) \in H_1(Y; \F_2)$. The following proposition, which describes how the homology class $\omega$ appears in the holonomy description for the moduli space of flat connections on $P \longrightarrow Y$, is well-known.

\begin{prop}
\label{thm:so3holonomy}
Let $Y$ be a compact, orientable $3$-manifold and $P \longrightarrow Y$ a principal $\SO(3)$-bundle on $Y$. Let $\omega = \mathrm{PD}(w_2(P))$ and also let $\omega$ denote a link in $Y$ representing this homology class, by abuse of notation. Then the moduli space of flat connections on $P$ has the holonomy description
\[
	\{\rho: \pi_1(Y \setminus \omega) \longrightarrow \SU(2) \mid \rho(\mu_{\omega_k}) = -I \text{ for } k = 1,\ \dots, m\}/\mathrm{conjugation},
\]
where $\mu_{\omega_k}$ is a meridian of the $k^\text{th}$ component of $\omega = \omega_1 \amalg \cdots \amalg \omega_m$.
\end{prop}

\subsection{Cerf Decompositions with Homology Class}

In analogy with the definition of $\SI(Y)$ (which corresponds to $\omega = 0$) we wish to define a ``twisted'' version of symplectic instanton homology using moduli spaces of flat connections on the nontrivial bundle $P \longrightarrow Y$ by looking at the pieces of a Cerf decomposition of $Y$. To do this, we will incorporate $P$ into our Cerf decompositions through the homology class $\omega = \text{PD}(w_2(P))$, by virtue of Proposition \ref{thm:so3holonomy}.

\begin{defn}
Let $Y$ be a compact, oriented $3$-manifold and $\omega \in H_1(Y; \F_2)$ be a mod $2$ homology class in $Y$. A {\bf Cerf decomposition} of $(Y, \omega)$ is a decomposition
\[
	[(Y, \omega)] = [(Y_1, \omega_1)] \circ \cdots \circ [(Y_m, \omega_m)],
\]
where $[Y] = [Y_1] \circ \cdots \circ [Y_m]$ is a Cerf decomposition of $Y$ in the usual sense and for $k = 1, \dots, m$, $\omega_k \in H_1(Y_k; \F_2)$ and these classes satisfy the condition that $\omega_1 + \cdots + \omega_m = \omega$ in $H_1(Y; \F_2)$ (where we conflate $\omega_k$ with its image in $H_1(Y; \F_2)$ under inclusion ).
\end{defn}

In order to decide which Cerf decompositions determine the same pair $(Y,\omega)$, we need Cerf moves for these Cerf decompositions with homology class. The Cerf moves in this context just require slight modifications made to the usual moves where the homology class is not considered.

\begin{defn}
(Cerf Moves) Let $[(Y,\phi)] \in \Hom_{\Bord_3^0}(X_-, X_+)$ be an equivalence class of bordisms and $\omega \in H_1(Y; \F_2)$. Suppose we have a Cerf decomposition
\[
	[(Y,\omega)] = [(Y_1, \omega_1)] \circ \cdots \circ [(Y_m,\omega_m)].
\]
By a {\bf Cerf move} we mean one of the following modifications made to $[(Y_1, \omega_1)] \circ \cdots \circ [(Y_m,\omega_m)]$:
\begin{itemize}
	\item[(a)] ({\bf Critical point cancellation}) Replace $\cdots \circ [(Y_k,\omega_k)] \circ [(Y_{k+1},\omega_{k+1})] \circ \cdots$ with $\cdots \circ [(Y_k \cup Y_{k+1}, \omega_k + \omega_{k+1})] \circ \cdots$ if $[Y_k \circ Y_{k+1}]$ is a cylindrical bordism.
	\item[(b)] ({\bf Critical point switch}) Replace $\cdots \circ [(Y_k,\omega_k)] \circ [(Y_{k+1},\omega_{k+1})] \circ \cdots$ with $\cdots \circ [(Y_k^\prime,\omega_k^\prime)] \circ [(Y_{k+1}^\prime,\omega_{k+1}^\prime)] \circ \cdots$, where $Y_k$, $Y_{k+1}$, $Y_k^\prime$, and $Y_{k+1}^\prime$ satisfy the following conditions: $Y_k \cup Y_{k+1} \cong Y_k^\prime \cup Y_{k+1}^\prime$, and for some choice of Morse data $(f, \underline{t})$, $(f^\prime, \underline{t}^\prime)$ inducing the two Cerf decompositions and a metric on $Y$, the attaching cycles for the critical points $y_k$ and $y_{k+1}$ of $f$ in $X_k$ and $y_k^\prime$ and $y_{k+1}^\prime$ of $f^\prime$ in $X_k^\prime$ are disjoint; in $X_{k-1} = X_{k-1}^\prime$, the attaching cycles of $y_k$ and $y_{k+1}^\prime$ are homotopic; and in $X_{k+1} = X_{k+1}^\prime$ the attaching cycles of $y_{k+1}$ and $y_k^\prime$ are homotopic. See Figure \ref{fig:critptswitch} for an example of this move. Furthermore, the homology classes involved should satisfy $\omega_k + \omega_{k+1} = \omega_k^\prime + \omega_{k+1}^\prime$.
	\item[(c)] ({\bf Cylinder creation}) Replace $\cdots \circ [(Y_k,\omega_k)] \circ \cdots$ with $\cdots \circ [(Y_k^\prime,\omega_k^\prime)] \circ [(Y_k^{\prime\prime},\omega_k^{\prime\prime})] \circ \cdots$ where $Y_k \cong Y_k^\prime \cup Y_k^{\prime\prime}$, one of $[Y_k^\prime]$, $[Y_k^{\prime\prime}]$ is cylindrical, and $\omega_k = \omega_k^\prime + \omega_k^{\prime\prime}$.
	\item[(d)] ({\bf Cylinder cancellation}) Replace $\cdots \circ [(Y_k,\omega_k)] \circ [(Y_{k+1},\omega_{k+1})] \circ \cdots$ with $\cdots \circ [(Y_k \circ Y_{k+1},\omega_k+\omega_{k+1})] \circ \cdots$ whenever one of $[Y_k]$, $[Y_{k+1}]$ is cylindrical.
	\item[(e)] ({\bf Homology class swap}) Replace $\cdots \circ [(Y_k,\omega_k)] \circ [(Y_{k+1},\omega_{k+1})] \circ \cdots$ with $\cdots \circ [(Y_k,\omega_k^\prime)] \circ [(Y_{k+1},\omega_{k+1}^\prime)] \circ \cdots$ whenever $\omega_k + \omega_{k+1} = \omega_k^\prime + \omega_{k+1}^\prime$ and at least one of $[Y_k]$, $[Y_{k+1}]$ is cylindrical.
\end{itemize}
\end{defn}

The following fundamental theorem follows effortlessly from the usual Cerf theory.

\begin{thm}
If $[(Y,\phi)]$ is a connected bordism of dimension at least three and $\omega \in H_1(Y; \F_2)$, then any two Cerf decompositions of $[(Y, \phi, \omega)]$ are related by a finite sequence of Cerf moves.
\end{thm}

\subsection{Definition via Cerf Decompositions}

As before, given a closed, oriented $3$-manifold $Y$, we can remove a pair of open balls from $Y$ to get a $3$-manifold $Y'$ with a Cerf decomposition $(Y_1^\prime, \dots, Y_m^\prime)$, and then we can remove a trivial $3$-stranded tangle connecting the boundary components and missing all critical points to get a $3$-manifold $Y^\theta$ with induced Cerf decomposition $(Y_1^\theta, \dots, Y_m^\theta)$.

We wish to incorporate the data of a nontrivial $\SO(3)$-bundle $\omega$ into our Lagrangian correspondences. Abusing notation, let $\omega \in H_1(Y; \F_2)$ be the Poincar\'e dual to the second Stiefel-Whitney class of our $\SO(3)$-bundle. We may represent $\omega$ by a smoothly embedded unoriented curve in $Y$, which furthermore may be assumed disjoint from the $3$-balls we remove, so that we get an induced curve $\omega$ (taking our abuse of notation further) in $Y^\theta$ that is geometrically unlinked with the trivial $3$-stranded tangle.

In terms of the Cerf decomposition $Y^\theta = (Y_1^\theta, \dots, Y_m^\theta)$, a generic embedded representative of $\omega$ intersects each intermediate surface $X_k^\theta$ in an even number of points (since they are all separating). We may then eliminate the intersection points in pairs without changing the homology class represented by the resulting curve; indeed, the two curves are homologous via a saddle cobordism. As a result, for each $k = 1, \dots, m$, we have an unoriented curve $\omega_k$ (possibly empty, but we may assume it is connected up to homology by using saddle moves to merge components) contained on the interior of $Y_k^\theta$, and $\omega_1 + \cdots + \omega_m$ is homologous to our original $\omega$.

With this data fixed, associate to each piece $Y_k^\theta$ of the Cerf decomposition the moduli space
\[
	\scrR(Y_k^\theta, \omega_k) = \left.\left\{ \begin{array}{c} \rho: \pi_1(Y_k^\theta \setminus (3\text{-tangle} \cup \omega_k)) \\ \longrightarrow \SU(2) \end{array} \left| \begin{array}{c} \rho(\text{tangle meridians}) \text{ are traceless} \\ \rho(\text{meridian of } \omega_k) = -I \end{array} \right\}\right.\right/ \text{conj.}
\]
Denote the image of $\scrR(Y_k^\theta, \omega_k)$ in $\scrR_{g(X_{k-1}),3}^- \times \scrR_{g(X_k),3}$ under restriction to the boundary by $L(Y_k^\theta, \omega_k)$.

\begin{prop}
$L(Y_k^\theta, \omega_k)$ defines a smooth Lagrangian correspondence from $\scrR_{g(X_{k-1}),3}$ to $\scrR_{g(X_k),3}$.
\end{prop}

\begin{proof}
$L(Y_k^\theta, \omega_k)$ is an isotropic submanifold of $\scrR_{g(X_{k-1}),3}^- \times \scrR_{g(X_k),3}$ for the same reason $L(Y_k^\theta)$ is (the proof applies verbatim). $L(Y_k^\theta, \omega_k)$ is clearly $3(g(X_{k-1}) + g(X_k)) = \frac{1}{2} \dim(\scrR_{g(X_{k-1}),3}^- \times \scrR_{g(X_k),3})$-dimensional, hence it is Lagrangian.
\end{proof}

Now we verify that the generalized Lagrangian correspondence $(L(Y_1^\theta, \omega_1), \dots, L(Y_m^\theta, \omega_m))$ doesn't depend on the choice of Cerf decomposition.

\begin{thm}
The morphism $(L(Y_1^\theta, \omega_1), \dots, L(Y_m^\theta, \omega_m)) \in \Hom_\mathbf{Symp}(\scrR_{0,3}, \scrR_{0,3})$ is unchanged under Cerf moves on $((Y_1^\theta, \omega_1), \dots, (Y_m^\theta, \omega_m))$.
\label{thm:omega-cerf-invariance}
\end{thm}

\begin{proof}
The proof of invariance under Cerf moves (a)-(d) are just trivial modifications of the $\omega = 0$ case proved in Section \ref{chap:SI-cerf}. It remains to check invariance under the new Cerf move appearing when homology classes are introduced, the homology class swap. We wish to compare $(\dots, L(Y_k^\theta, \omega_k), L(Y_{k+1}^\theta, \omega_{k+1}), \dots)$ and $(\dots, L(Y_k^\theta, \omega_k^\prime), L(Y_{k+1}^\theta, \omega_{k+1}^\prime), \dots)$, where at least one of $Y_k^\theta$, $Y_{k+1}^\theta$ are cylindrical and $\omega_k + \omega_{k+1} = \omega_k^\prime + \omega_{k+1}^\prime$. Without loss of generality, suppose $Y_{k+1}^\theta$ is cylindrical. Then $Y_k^\theta \cup Y_{k+1}^\theta \cong Y_k^\theta$ and the associated Lagrangian correspondences are composable, and
\begin{align*}
	L(Y_k^\theta, \omega_k) \circ L(Y_{k+1}^\theta, \omega_{k+1}) & \cong L(Y_k^\theta, \omega_k + \omega_{k+1}) \\
	 & = L(Y_k^\theta, \omega_k + \omega_{k+1}^\prime) \\
	 & \cong L(Y_k^\theta, \omega_k^\prime) \circ L(Y_{k+1}^\theta, \omega_{k+1}^\prime).
\end{align*}
It follows that $(\dots, L(Y_k^\theta, \omega_k), L(Y_{k+1}^\theta, \omega_{k+1}), \dots)$ and $(\dots, L(Y_k^\theta, \omega_k^\prime), L(Y_{k+1}^\theta, \omega_{k+1}^\prime), \dots)$ are identical morphisms in $\Hom_{\Symp}(\scrR_{0,3}, \scrR_{0,3})$.
\end{proof}

As in the $\omega = 0$ case, we wish to take the quilted Floer homology of the generalized Lagrangian correspondence $(L(Y_1^\theta, \omega_1), \dots, L(Y_m^\theta,\omega_m))$ as our definition of the symplectic instanton homology of $(Y,\omega)$. But first we must make sure the Floer homology is actually defined, as the Lagrangians have minimal Maslov number $2$.

\begin{thm}
\label{thm:cerf-disk-invariant-nontrivial}
For any Cerf decomposition with homology class of the form $((Y_1^\theta,\omega_1), \dots,\allowbreak (Y_m^\theta,\omega_m))$ and generic compatible almost complex structures $J_k \in \J(X_k^\theta)$, the disk invariants of the associated Lagrangian correspondences satisfy
\[
	\Phi_{J_0 \oplus J_1}(L(Y_1^\theta,\omega_1), x_1) + \cdots + \Phi_{J_{m-1} \oplus J_m}(L(Y_m^\theta,\omega_m),x_m) = 0.
\]
\end{thm}

\begin{proof}
We show that the statement is equivalent to the $\omega = 0$ case already proven in Theorem \ref{thm:cerf-disk-invariant}. This will be achieved by showing that $\Phi(L(Y_k^\theta,\omega_k)) = \Phi(L(Y_k^\theta, 0))$ for each $k$. To see this, write $W_k^\theta = \partial_+ Y_k^\theta \times [0,1]$. Then $(Y_k^\theta, \omega_k)$ is equivalent to $(Y_k^\theta, \omega_k) \circ (W_k^\theta, 0)$, and by a homology class swap this is further equivalent to $(Y_k^\theta, 0) \circ (W_k^\theta, \omega_k^\prime)$ (here we should assume that the genus of $\partial_+ Y_k^\theta$ is greater than the genus of $\partial_- Y_k^\theta$ so that the homology class swap is possible; if this is not the case, perform the above construction for $W_k^\theta = \partial_- Y_k^\theta \times [0,1]$ instead). Given any simple closed curve $\gamma \subset \partial_+ Y_k^\theta$, write $\omega_k^\prime \cdot \gamma$ for the mod $2$ intersection number of $\omega_k^\prime$ with $\gamma \times [0,1]$ in $W_k^\theta$. Then it is easily seen (cf. Lemma \ref{lem:lagr-omega}) that $L(W_k^\theta, \omega_k^\prime)$ is the graph of the diffeomorphism
\[
	\scrR_{g,3} \longrightarrow \scrR_{g,3}: \rho \mapsto \rho',
\]
where $\rho'(\gamma) = (-1)^{\omega_k^\prime \cdot \gamma}\rho(\gamma)$ for any $\gamma \in \pi_1(\Sigma_{g,3})$. Since $L(W_k^\theta,\omega_k^\prime)$ is Lagrangian in $\scrR_{g,3}^- \times \scrR_{g,3}$, this diffeomorphism is in fact a symplectomorphism, and hence
\[
	\Phi(L(Y_k^\theta, \omega_k)) = \Phi(L(Y_k^\theta,0) \circ L(W_k^\theta,\omega_k^\prime)) = \Phi(L(Y_k^\theta,0)).
\]
Therefore we reduce the desired equality to the $\omega = 0$, which is known to be true by Theorem \ref{thm:cerf-disk-invariant}.
\end{proof}

Theorem \ref{thm:cerf-disk-invariant-nontrivial} implies that the Floer boundary operator $\partial_{\underline{J}}$ on
\[
	\CSI((Y_1^\theta, \omega_1), \dots, (Y_m^\theta,\omega_m)) = \CF(L(Y_1^\theta,\omega_1), \dots, L(Y_m^\theta,\omega_m))
\]
satisfies $\partial_{\underline{J}}^2 = 0$ for generic tuples of almost complex structures $\underline{J} = (J_1, \dots, J_m) \in \J(\scrR_{g_1,3}) \times \cdots \times \J(\scrR_{g_m,3})$. We may therefore define the {\bf symplectic instanton homology} of $(Y,\omega)$ to be the quilted Floer homology group
\[
	\SI(Y,\omega) = H_\ast(\CF(L(Y_1^\theta, \omega_1), \dots, L(Y_m^\theta, \omega_m), \partial_{\underline{J}}).
\]
By Theorem \ref{thm:omega-cerf-invariance}, $\SI(Y,\omega)$ depends only on the diffeomorphism type of $Y$ and the homology class $\omega \in H_1(Y; \F_2)$.

\subsection{Definition via Heegaard Diagrams}

\hide{
For computations, it is best to use a nice, symmetric kind of Cerf decomposition. A genus $g$ Heegaard splitting of $Y$, $Y = H_\alpha \cup_{\Sigma_g} H_\beta$, will induce a Cerf decomposition with $2g$ pieces, one for each handle of $H_\alpha$ and $H_\beta$. We can explicitly visualize these handle attachments using a Heegaard diagram. Recall that a {\bf genus $g$ (pointed) Heegaard diagram} is a tuple $(\Sigma_g, \bfalpha, \bfbeta, z)$ consisting of a closed surface $\Sigma_g$, two $g$-tuples of simple closed curves $\bfalpha = \{\alpha_1, \dots, \alpha_g\}$ and $\bfbeta = \{\beta_1, \dots, \beta_g\}$, and a basepoint $z \in \Sigma_g \setminus (\bfalpha \cup \bfbeta)$ satisfying the following conditions:
\begin{itemize}
	\item The $\alpha$-curves are all pairwise disjoint and the $\beta$-curves are all pairwise disjoint. (An $\alpha$-curve is permitted to intersect a $\beta$-curve, however)
	\item The $\alpha$-curves are linearly independent in $H_1(\Sigma_g; \R)$, and the $\beta$-curves are linearly independent in $H_1(\Sigma_g; \R)$.
\end{itemize}

A Heegaard diagram $(\Sigma_g, \bfalpha, \bfbeta, z)$ is assigned to a Heegaard splitting $Y = H_\alpha \cup_{\Sigma_g} H_\beta$ by identifying the Heegaard surface with the abstract surface $\Sigma_g$, taking the $\alpha$-curves to be disjoint, homologically independent curves bounding disks in $H_\alpha$, taking the $\beta$-curves to be disjoint, homologically independent curves bounding disks in $H_\beta$, and taking $z$ to be any point disjoint from the chosen $\alpha$- and $\beta$-curves. There are many choices for $\alpha$- and $\beta$-curves satisfying these conditions, but they are related by a finite sequence of the following moves:
\begin{itemize}
	\item[(1)] (Isotopies) Any $\alpha$-curve can we changed by an isotopy that misses the other $\alpha$-curves and the basepoint $z$. Any $\beta$-curve can we changed by an isotopy that misses the other $\beta$-curves and the basepoint $z$.
	\item[(2)] (Handleslides) If $\gamma_1$, $\gamma_2$, and $\gamma_3$ are disjoint simple closed curves in a surface $\Sigma$ which together bound a embedded pair of pants (called the ``handleslide region'') in $\Sigma$, we say that $\gamma_3$ is a handleslide of $\gamma_1$ over $\gamma_2$ (the order of the curves is irrelevant for this definition). We can replace any $\alpha$-curve $\alpha_k$ by a curve $\alpha_k^\prime$ that is a handleslide of $\alpha_k$ over another $\alpha$-curve $\alpha_j$, $j \neq k$, as long as the handleslide region does not contain any other $\alpha$-curve or the basepoint $z$. We can replace any $\beta$-curve $\beta_k$ by a curve $\beta_k^\prime$ that is a handleslide of $\beta_k$ over another $\beta$-curve $\beta_j$, $j \neq k$, as long as the handleslide region does not contain any other $\beta$-curve or the basepoint $z$. 
\end{itemize}

By the Reidemeister-Singer theorem, all Heegaard splittings of a given $3$-manifold $Y$ differ up to isotopy only by (de)stabilizations, which amounts to connect summing with the standard genus $1$ Heegaard splitting for $S^3$ (or removing such a connect summand). In terms of Heegaard diagrams, this amounts to the following move:
\begin{itemize}
	\item[(3)] (Stabilization) Let $\calH_0 = (\Sigma_1, \alpha_0, \beta_0)$ be the (unpointed) genus $1$ Heegaard diagram with $\alpha_0$ the standard meridian of $\Sigma_1$ and $\beta_0$ the standard longitude. Then the {\bf stabilization} $\calH'$ of a Heegaard diagram $\calH = (\Sigma_g, \bfalpha, \bfbeta, z)$ is the connect sum of this diagram with the diagram $\calH_0$. In other words, $\calH' = (\Sigma_{g + 1},  \bfalpha \cup \{\alpha_0\}, \bfbeta \cup \{\beta_0\}, z)$. The connect sum is performed in a small neighborhood of the basepoint $z$.
\end{itemize}

All possible Heegaard diagrams representing a Heegaard splitting of a $3$-manifold $Y$ are related by a finite sequence of isotopies, handleslides, and (de)stabilizations. Hence to prove that an invariant of Heegaard diagrams induces an invariant of $3$-manifolds, we need only check that the invariant is unchanged under moves (1)-(3).\footnote{We have already proved the more general statement that $\SI(Y,\omega)$ is an invariant of $(Y, \omega)$ by defining it in terms of Cerf decompositions and showing it is unchanged under Cerf moves.}
}

A Heegaard diagram $(\Sigma_g, \bfalpha, \bfbeta, z)$ for a $3$-manifold $Y$ gives a Cerf decomposition with trivial $3$-tangle for $Y^\theta$ by the following process. Start with $\Sigma_g \times [-1,1]$. Attach $2$-handles to $\Sigma_g \times \{-1\}$ along the curves $\alpha_k \times \{-1\}$ to obtain the $\alpha$-handlebody minus a $3$-ball, $H_\alpha^\prime$, and attach $2$-handles to $\Sigma_g \times \{1\}$ along the curves $\beta_k \times \{1\}$ to obtain the $\beta$-handlebody minus a $3$-ball, $H_\beta^\prime$; we then have $Y' := Y \setminus (\text{two $3$-balls}) = H_\alpha^\prime \cup (\Sigma_g \times [-1, 1]) \cup H_\beta^\prime$. Since the $\alpha$-curves (resp. $\beta$-curves) are pairwise disjoint, the order of the handle attachments for each handlebody is irrelevant. We can dually think of $H_\alpha^\prime$ as being obtained from $1$-handle attachments, so that we have a Cerf decomposition
\[
	Y' = Y_{\alpha_1} \cup_{\Sigma_1} Y_{\alpha_2} \cup_{\Sigma_2} \cdots \cup_{\Sigma_{g-1}} Y_{\alpha_g} \cup_{\Sigma_g} Y_{\beta_g} \cup_{\Sigma_{g - 1}} \cdots \cup_{\Sigma_1} Y_{\beta_1},
\]
where each $Y_{\alpha_k}: \Sigma_{k-1} \longrightarrow \Sigma_k$ is a $1$-handle cobordism induced by the curve $\alpha_k$, and each $Y_{\beta_k}: \Sigma_k \longrightarrow \Sigma_{k-1}$ is a $2$-handle cobordism induced by the curve $\beta_k$. Since the basepoint $z \in \Sigma_g$ is disjoint from all attaching cycles, we can fix a Morse function inducing this Cerf decomposition and consider the gradient flow line $\gamma_z$ passing through $z$; this flow line necessarily connects the two boundary $2$-spheres of $Y'$. We may therefore remove a trivial $3$-stranded tangle from a regular neighborhood of $\gamma_z$ get a Cerf decomposition
\[
	Y^\theta = Y_{\alpha_1}^\theta \cup_{\Sigma_{1,3}} Y_{\alpha_2}^\theta \cup_{\Sigma_{2,3}} \cdots \cup_{\Sigma_{g-1,3}} Y_{\alpha_g}^\theta \cup_{\Sigma_{g,3}} Y_{\beta_g}^\theta \cup_{\Sigma_{g - 1,3}} \cdots \cup_{\Sigma_{1,3}} Y_{\beta_1}^\theta.
\]
Given a mod $2$ homology class $\omega \in H_1(Y; \F_2)$, we can write
\begin{align*}
	\omega & = \omega_{\alpha_1} + \cdots + \omega_{\alpha_g} + \omega_{\beta_g} + \cdots + \omega_{\beta_1} \\
	& \in H_1(Y_{\alpha_1}; \F_2) \oplus \cdots \oplus H_1(Y_{\alpha_g}; \F_2) \oplus H_1(Y_{\beta_g}; \F_2) \oplus \cdots \oplus H_1(Y_{\beta_1}; \F_2).
\end{align*}
Clearly these induce well-defined mod $2$-homology classes in $Y^\theta$, which we will still denote by $\omega$, $\omega_{\alpha_k}$, and $\omega_{\beta_k}$. We therefore get a Cerf decomposition with homology class
\begin{equation}
	(Y^\theta, \omega) = (Y_{\alpha_1}^\theta, \omega_{\alpha_1}) \cup_{\Sigma_{1,3}} \cdots \cup_{\Sigma_{g-1,3}} (Y_{\alpha_g}^\theta, \omega_{\alpha_g}) \cup_{\Sigma_{g,3}} (Y_{\beta_g}^\theta, \omega_{\beta_g}) \cup_{\Sigma_{g - 1,3}} \cdots \cup_{\Sigma_{1,3}} (Y_{\beta_1}^\theta, \omega_{\beta_1}).
	\label{eqn:cerf-3mfd}
\end{equation}
We wish to simplify the generalized Lagrangian correspondence associated to this Cerf decomposition. The first thing to note is that we can take almost all of the homology classes to be zero. Indeed, by applying the Mayer-Vietoris sequence to the Heegaard splitting $Y = H_\alpha \cup_{\Sigma_g} H_\beta$, one sees that both $i_\ast^\alpha: H_1(H_\alpha; \F_2) \longrightarrow H_1(Y; \F_2)$ and $i_\ast^\beta: H_1(H_\beta; \F_2) \longrightarrow H_1(Y; \F_2)$ are surjective, so that one may take $\omega$ to be represented by a circle lying entirely in just one of the handlebodies, say $H_\alpha$. Furthermore, $i_\ast: H_1(\Sigma_g; \F_2) \longrightarrow H_1(H_\alpha; \F_2)$ is also a surjection, so we can further restrict to the case where $\omega$ is represented by a curve in a collar neighborhood of the boundary of $H_\alpha$. This implies that we can insert a cylinder $(\Sigma_{g,3} \times [0,1], \omega)$ between $(Y_{\alpha_g}^\theta, \omega_{\alpha_g})$ and $(Y_{\beta_g}^\theta, \omega_{\beta_g})$ in the Cerf decomposition (\ref{eqn:cerf-3mfd}) and set all the other homology classes equal to zero. The Lagrangian correspondence associated to $(\Sigma_{g,3} \times [0,1], \omega)$ turns out to have a simple description:

\begin{lem}
Let $W^\theta = \Sigma_{g,3} \times [0,1]$ be a trivial cobordism with a trivial $3$-stranded tangle removed, and suppose $\omega \in H_1(W^\theta; \F_2)$. Then
\[
	L(W^\theta, \omega) = \{([\rho], [\rho^\prime]) \in \scrR_{g,3} \times \scrR_{g,3} \mid \rho^\prime(\gamma) = (-1)^{\gamma \cdot \omega} \rho(\gamma) \text{ for each } \gamma\},
\]
where $\gamma$ is any smooth curve in $\Sigma_{g,3}$ and $\gamma \cdot \omega$ is the mod $2$ intersection number of $\gamma$ with the projection of a geometric representative for $\omega_\alpha$ to $\Sigma_{g,3}$.
\label{lem:lagr-omega}
\end{lem}

\begin{proof}
$L(W^\theta, \omega)$ consists of pairs of (gauge equivalence classes of) connections $(A_0, A_1) \in \scrR_{g,3} \times \scrR_{g,3}$ which are the boundary values of a connection $\bfA$ on $W^\theta$ that has traceless holonomy around each strand of the trivial tangle and holonomy $-I$ around the meridian of the curve $\omega$. For any based curve $\gamma$ in $\Sigma_{g,3}$ (with basepoint missing $\omega$), consider $\gamma \times [0,1] \subset W^\theta$ as a rectangle. This rectangle intersects $\omega$ exactly $\gamma \cdot \omega$ times. Therefore the holonomy of $\bfA$ around the boundary of this square is $(-I)^{\gamma \cdot \omega}$. This holonomy is also equal to $\Hol_{\gamma \times \{1\}}(\bfA)\Hol_{\gamma \times \{0\}}(\bfA)^{-1}$, and therefore
\[
	\Hol_{\gamma \times \{1\}}(\bfA) = (-1)^{\gamma \cdot \omega} \Hol_{\gamma \times \{0\}}(\bfA)
\]
for any curve $\gamma$ in $\Sigma_{g,3}$.
\end{proof}

The following theorem shows that in the current setup, we can associate genuine Lagrangian submanifolds of $\scrR_{g,3}$ to $H_\alpha$ and $H_\beta$, not just generalized Lagrangian correspondences $\text{pt} \longrightarrow \scrR_{g,3}$.

\begin{thm}
The geometric compositions
\[
	L_\alpha^{\omega} = L(Y_{\alpha_1}^\theta) \circ \cdots \circ L(Y_{\alpha_g}^\theta) \circ L(\Sigma_{g,3} \times [0,1], \omega)
\]
and
\[
	L_\beta^\omega = L(\Sigma_{g,3} \times [0,1], \omega) \circ L(Y_{\beta_g}^\theta) \circ \cdots \circ L(Y_{\beta_1}^\theta)
\]
are embedded.
\end{thm}

\begin{proof}
The proof that the geometric composition
\[
	L_\alpha = L(Y_{\alpha_1}^\theta) \circ \cdots \circ L(Y_{\alpha_g}^\theta)
\]
is embedded is already known from Section \ref{chap:SI-cerf}. $L_\alpha$ has the holonomy description
\[
	L_\alpha = \{[\rho] \in \scrR_{g,3} \mid \rho(\alpha_k) = I \text{ for each } k\}.
\]
Then it is clear by Lemma \ref{lem:lagr-omega} that
\begin{align*}
	L_\alpha^\omega & = L_\alpha \circ L(\Sigma_{g,3} \times [0,1], \omega) \\
		 & = \{[\rho] \in \scrR_{g,3} \mid \rho(\alpha_k) = (-1)^{\alpha_k \cdot \omega} I \text{ for each } k\}
\end{align*}
and that this composition is embedded. The analogous statement for $L_\beta^\omega$ follows in the same way.
\end{proof}

The holonomy description for $L_\alpha^\omega$ (and the obvious analogue for $L_\beta^\omega$) is useful in its own right, so we record it separately here:

\begin{prop}
Given $\omega \in H_1(Y; \F_2)$ and a Heegaard splitting $Y = H_\alpha \cup_{\Sigma_g} H_\beta$, represent $\omega$ by a smooth loop in a bicollar neighborhood of the Heegaard surface, which we may assume lies on a parallel copy of $\Sigma_g$. Then
\[
	L_\alpha^\omega = \{[\rho] \in \scrR_{g,3} \mid \rho(\alpha_k) = (-1)^{\alpha_k \cdot \omega} I \text{ for each } k\},
\]
\[
	L_\beta^\omega = \{[\rho] \in \scrR_{g,3} \mid \rho(\beta_k) = (-1)^{\beta_k \cdot \omega} I \text{ for each } k\},
\]
where $\alpha_k \cdot \omega$ is the mod $2$ intersection number of $\alpha_k$ with the projection of the geometric representative for $\omega$ to $\Sigma_g$, and similarly for $\beta_k \cdot \omega$.
\label{thm:lagr-omega}
\end{prop}

As a consequence of the above, when $\SI(Y, \omega)$ is computed from a Heegaard splitting, it can be computed as a classical Lagrangian Floer homology group for two Lagrangians in $\scrR_{g,3}$, rather than the quilted Floer homology group of a pair of generalized Lagrangian correspondences. Furthermore, we may always think of $\omega \in H_1(Y; \F_2)$ as represented by a curve in a parallel copy of the Heegaard surface, and incorporate it either into the Lagrangian for the $\alpha$-handlebody or the $\beta$-handlebody. Hence we may compute the symplectic instanton homology as
\[
	\SI(Y, \omega) = \HF(L_\alpha^\omega, L_\beta) = \HF(L_\alpha, L_\beta^\omega),
\]
so that only one of the Lagrangians is different from the $\omega = 0$ case\hide{ in \cite{horton1}}, and the way it is different is described exactly as in Proposition \ref{thm:lagr-omega}.

Note that Theorem \ref{thm:lagrangianS3}\hide{4.2 of \cite{horton1}} (which says that $L_\alpha$ is a copy of $(S^3)^g$ for any set of attaching curves $\bfalpha$) applies equally well to $L_\alpha^\omega$, so that $L_\alpha^\omega$ is a Lagrangian $(S^3)^g$ in $\scrR_{g,3}$, and furthermore this identification is achieved by conjugating $[A_1, \dots, B_g, C_1, C_2, C_3] \in L_\alpha^\omega$ to $[A_1^\prime, \dots, B_g^\prime, \bfi, \bfj, -\bfk]$ (the $(A_1^\prime, \dots, B_g^\prime)$ describe a $g$-fold product of $3$-spheres when varying over all $[A_1, \dots, B_g, C_1, C_2, C_3] \in L_\alpha$). This identification says that we can consider $L_\alpha^\omega$ as $\SU(2)$-representations of the $\alpha$-handlebody minus $\omega$ sending the meridian of $\omega$ to $-I$, \emph{without} modding out by conjugation. This in turn leads to the following interpretation of $L_\alpha^\omega \cap L_\beta$:

\begin{thm}
\label{thm:omega-intersection}
For any two sets of attaching curves $\bfalpha$ and $\bfbeta$ and homology class $\omega \in H_1(Y; \F_2)$, we have that
\[
	L_\alpha^\omega \cap L_\beta \cong \{\rho: \pi_1(Y \setminus \omega) \longrightarrow \SU(2) \mid \rho(\mu_\omega) = -I\}.
\]
\end{thm}

\begin{proof}
Let $H_\alpha$ and $H_\beta$ denote the $\alpha$- and $\beta$-handlebodies, respectively. Then the discussion above shows that
\[
	L_\alpha = \{\rho: \pi_1(H_\alpha \setminus \omega) \longrightarrow \SU(2) \mid \rho(\mu_\omega) = -I\}, \quad\quad L_\beta = \Hom(\pi_1(H_\beta), \SU(2)),
\]
where we do not mod out by the action of conjugation. Furthermore, wee see that
\[
	\left\{[A_1, B_1, \dots, A_g, B_g, C_1, C_2, C_3] \in \scrR_{g,3} ~\left|~ \prod_{k = 1}^g [A_k, B_k] = I\right\}\right. = \Hom(\pi_1(\Sigma_g), \SU(2)),
\]
and $L_\alpha^\omega$, $L_\beta$ always lie entirely inside this subset of $\scrR_{g,3}$. Therefore by the Seifert-Van Kampen theorem an intersection point of $L_\alpha^\omega$ and $L_\beta$ corresponds to an element of $\{\rho: \pi_1(Y \setminus \omega) \longrightarrow \SU(2) \mid \rho(\mu_\omega) = -I\}$.
\end{proof}

We also remark that the proof of naturality for $\SI(Y)$ in Section \ref{sect:invariance}\hide{\cite[Section 6]{horton1}} directly translates to a proof of naturality for $\SI(Y, \omega)$, resulting in the following:

\begin{thm}
$\SI(Y, \omega)$ is a natural invariant of the pair $(Y, \omega)$, in that one can pin down $\SI(Y, \omega)$ as a concrete group as opposed to an isomorphism class of groups. 
\end{thm}

\section{Exact Triangle for Surgery Triads}

Now that we have extended symplectic instanton homology to take into account nontrivial $\SO(3)$-bundles, we can properly state and prove the exact triangle for Dehn surgery on a knot.

\subsection{$3$-Manifold Triads and the Statement}

Suppose $\widetilde{Y}$ is a compact, oriented $3$-manifold with torus boundary. Given three oriented simple closed curves $\gamma$, $\gamma_0$, and $\gamma_1$ in $\partial \widetilde{Y}$ satisfying
\[
	\#(\gamma \cap \gamma_0) = \#(\gamma_0 \cap \gamma_1) = \#(\gamma_1 \cap \gamma) = -1,
\]
we may form three closed, oriented $3$-manifolds $Y$, $Y_0$, and $Y_1$ by Dehn filling $\widetilde{Y}$ along $\gamma$, $\gamma_0$, and $\gamma_1$, respectively. If a triple of closed, oriented $3$-manifolds $(Y, Y_0, Y_1)$ arises in this way from \emph{some} $\widetilde{Y}$, we call $(Y, Y_0, Y_1)$ a {\bf surgery triad}.

It is easy to see that $(Y, Y_0, Y_1)$ is a surgery triad if and only if there is a framed knot $\bbK = (K, \lambda)$ in $Y$ such that $Y_0 = Y_\lambda$ and $Y_1 = Y_{\lambda + \mu}$, where $\mu$ is the meridian of $K$. Note that surgery triads are cyclic, if that if $(Y, Y_0, Y_1)$ is a surgery triad, then so are $(Y_0, Y_1, Y)$ and $(Y_1, Y, Y_0)$. If we want to emphasize this particular framed knot $\bbK$, we will say that $(Y, Y_0, Y_1)$ is a surgery triad {\bf relative to $(K, \lambda)$}.

If $\bbK = (K, \lambda)$ is a framed knot in a closed, oriented $3$-manifold $Y$ with meridian $\mu$ and $p, q$ are integers, we write
\[
	Y_{p/q}(\bbK) = Y_{p\mu + q\lambda}(\bbK)
\]
for the result of removing $\nu K$ from $Y$ and gluing back in a solid torus such that the curve $p\mu + q\lambda$ in $\partial (\nu K)$ bounds a disk.

If $K$ is a nullhomologous knot in a closed, oriented $3$-manifold $Y$, there is a unique framing $\lambda_\text{Seifert}$ for $K$ that is nullhomologous in $Y \setminus \nu K$ which is called the {\bf Seifert framing} of $K$. In this case, we just write $Y_{p/q}(K)$ for $Y_{p,q}(K, \lambda_\text{Seifert})$, where the lack of framing in the notation means we are using the Seifert framing by default.

We can form several interesting families of surgery triads:

\begin{exmp}
\label{ex:n-triad}
Given a framed knot $\bbK$ in a closed, oriented $3$-manifold $Y$ and any integer $n$, both $(Y, Y_n(\bbK), Y_{n+1}(\bbK))$ and $(Y_0(\bbK), Y_{1/(n+1)}(\bbK), Y_{1/n}(\bbK))$ are surgery triads.
\end{exmp}

\begin{exmp}
More generally, given a framed knot $\bbK$ in a closed, oriented $3$-manifold $Y$ and relatively prime integers $p_1, q_1$, B\'ezout's identity allows us to find integers $p_2$, $q_2$ satisfying $p_1 q_2 - p_2 q_1 = 1$. If we write $p_3 = p_1+p_2$ and $q_3 = q_1+q_2$, then $(Y_{p_1/q_1}(\bbK), Y_{p_2/q_2}(\bbK), Y_{p_3/q_3}(\bbK))$ is a surgery triad.
\end{exmp}

\begin{exmp}
\label{ex:bdc}
Let $L$ be a link in $S^3$ and fix a planar diagram $\calD$ for $L$. Given a crossing in $\calD$, we may resolve it in one of two ways (see Figure \ref{fig:skein}) to obtain links $L_0$ and $L_1$ differing from $L$ only at the chosen crossing in the prescribed manner. Then the branched double covers of these three links, $(\Sigma(L), \Sigma(L_0), \Sigma(L_1))$ form a surgery triad. We will discuss surgery triads of this form in more detail in \cite{horton2} when establishing a relation between $\SI(\Sigma(L))$ and the Khovanov homology of the mirror of $L$.
\end{exmp}

\begin{figure}[h]
	\centering
	\includegraphics[scale=1.5]{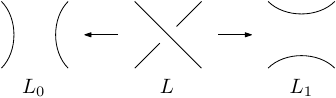}
	\caption{$0$- and $1$-resolutions of a crossing in a link.}
	\label{fig:skein}
\end{figure}

In general, in Floer theories for $3$-manifolds one expects an exact triangle relating the Floer homologies of any three $3$-manifolds fitting into a surgery triad. For symplectic instanton homology, we will prove the following:

\begin{thm}
\label{thm:surgery-triangle}
For any $3$-manifold triad $(Y, Y_0, Y_1)$ relative to a framed knot $(K,\lambda)$ in $Y$ and $\SO(3)$-bundle $P \longrightarrow Y$, there is an exact triangle of symplectic instanton homology groups:
\[
	\xymatrix{\SI(Y, \omega + \omega_K) \ar[rr] & & \SI(Y_0, \omega_0) \ar[dl] \\ & \SI(Y_1, \omega_1) \ar[ul] & }
\]
Here $\omega$, $\omega_0$, and $\omega_1$ are the mod $2$ homology classes Poincar\'e dual to the bundles induced by $P$ on $Y$, $Y_\lambda$, and $Y_{\lambda + \mu}$:
\[
	\omega = \PD(w_2(P)) \in H_1(Y; \F_2), \quad\quad\quad \omega_\lambda = i_\ast^0 \PD(i^\ast_{Y \setminus \nu K} w_2(P)) \in H_1(Y_0; \F_2),
\]
\[
	\omega_1 = i_\ast^1 \PD(i^\ast_{Y \setminus \nu K} w_2(P)) \in H_1(Y_1; \F_2).
\]
\end{thm}

\subsection{Seidel's Exact Triangle and Quilted Floer Homology}

\label{subsect:triangle}

Although the surgery exact triangle will essentially be an application of Seidel's exact triangle for symplectic Dehn twists \cite{seidel-triangle,fiberedtriangle,mak-wu} in the setting of quilted Floer homology for monotone Lagrangians, we recall some relevant ideas in the proof, as they will be needed later when we establish the link surgeries spectral sequence in \cite{horton2}. 

In what follows, we fix:
\begin{itemize}
	\item Two monotone symplectic manifolds $M$ and $M'$ with the same monotonicity constant.
	\item A monotone Lagrangian submanifold $L_0: \text{pt} \longrightarrow M$ (which we think of as a Lagrangian correspondence).
	\item A monotone Lagrangian correspondence $\underline{L}: M \longrightarrow M'$.
	\item Two monotone Lagrangian submanifolds $L_1, V: M' \longrightarrow \text{pt}$, where $V$ is topologically a sphere.
\end{itemize}
Our version of Seidel's exact triangle will be constructed from a sequence of maps
\[
	\CF(L_0, \underline{L}, V) \otimes \CF(V^T, L_1) \xrightarrow{~C\Phi_0~} \CF(L_0, \underline{L}, L_1) \xrightarrow{~C\Phi_1~} \CF(L_0, \underline{L}, \tau_V L_1).
\]
On the chain level, the maps appearing in the exact sequence are defined as follows. The first map
\[
	C\Phi_0: \CF(L_0, \underline{L}, V) \otimes \CF(V^T, L_1) \longrightarrow \CF(L_0, \underline{L}, L_1)
\]
is the relative invariant counting quilted pseudoholomorphic triangles as pictured in Figure \ref{fig:CPhi0}. The second map
\[
	C\Phi_1: \CF(L_0, \underline{L}, L_1) \longrightarrow \CF(L_0, \underline{L}, \tau_V L_1)
\]
is the relative invariant counting pseudoholomorphic sections of the quilted Lefschetz fibration in Figure \ref{fig:CPhi1}.

\begin{figure}[h]
\centering
\begin{minipage}{.45\textwidth}
	\includegraphics[scale=.95]{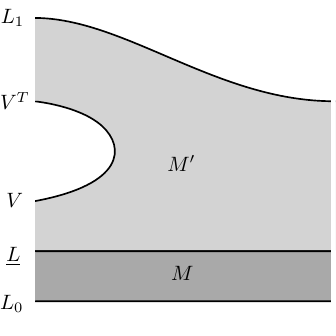}
	\caption{Quilts counted by the map $C\Phi_0$.}
	\label{fig:CPhi0}
\end{minipage}
\begin{minipage}{.45\textwidth}
	\includegraphics[scale=.95]{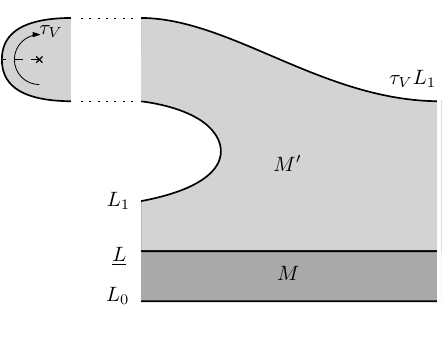}
	\caption{Quilted Lefschetz fibration defining $C\Phi_1$.}
	\label{fig:CPhi1}
\end{minipage}
\end{figure}

There is a distinguished Floer chain
\[
	c \in \CF(L_1, \tau_V L_1),
\]
\[
	\mu_1(c) = 0
\]
defined as follows. Let $E \longrightarrow \mathbb{D}$ be the standard Lefschetz fibration over the disk with regular fiber $M'$ and vanishing cycle $V$. By removing a point $z^c \in \partial \mathbb{D}$, we get a Lefschetz fibration with strip-like end $E^c \longrightarrow \mathbb{D}^c$. Denote the strip-like end by $\epsilon_{z^c}: \R^+ \times [0,1] \longrightarrow \mathbb{D}^c$. We equip $E^c \longrightarrow \mathbb{D}^c$ with the following moving Lagrangian boundary conditions: For $s \gg 0$, identify the fibers over $\epsilon_{z^c}(s,0)$ with $L_1$. As we leave the strip-like end and travel along $\partial \mathbb{D}^c$, we carry $L_1$ along by parallel transport. Once we reach the other side of the strip-like end, our parallel-transported Lagrangian will be isotopic to $\tau_V L_1$; the boundary condition along $\epsilon_{z^c}(s, 1)$, $s \gg 0$ realizes such an isotopy.

The next ingredient for the exact triangle is an explicit chain nullhomotopy of the triangle product with $c$:
\[
	k: \CF(V, L_1) \longrightarrow \CF(V, \tau_V L_1),
\]
\[
	\mu_1(k(\cdot)) + k(\mu_1(\cdot)) + \mu_2(\cdot, c) = 0.
\]

\begin{figure}[h]
	\centering
	\includegraphics[scale=.9]{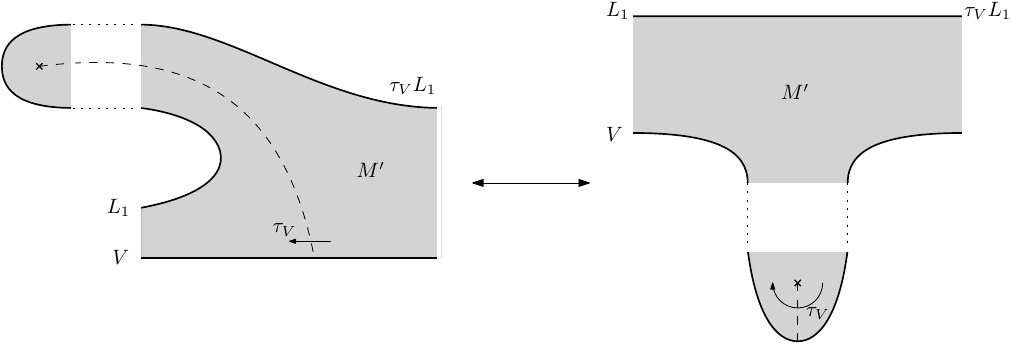}
	\caption{The $1$-parameter family of Lefschetz fibrations defining the chain nullhomotopy $k$.}
	\label{fig:nullhomotopy-k}
\end{figure}

The construction of $k$ is pictured schematically in Figure \ref{fig:nullhomotopy-k}. The meaning of the left and right surfaces with strip-like ends should be evident; the dotted lines connecting boundary components indicate gluings of components using sufficiently long gluing lengths, so that the associated relative invariant of the glued fibrations corresponds to the composition of the individual relative invariants. In particular, the left-hand surface represents $\mu_2(\cdot, c)$, and the right-hand surface represents a triangle glued with a Floer chain which is equal to zero (see \cite[Corollary 4.31]{fiberedtriangle}, for example). There is a clear $1$-parameter family of Lefschetz fibrations $E^{k,r} \longrightarrow S^{k,r}$ ($0 \leq r \leq 1$) over the strip, equal to the left of Figure \ref{fig:nullhomotopy-k} for $r = 0$ and equal to the right of Figure \ref{fig:nullhomotopy-k} for $r = 1$, achieved by moving the position of singular fiber. The nullhomotopy $k$ is then defined by counting isolated points of the parametrized moduli space of pseudoholomorphic sections of $E^{k,r} \longrightarrow S^{k,r}$.

Finally, we may use $k$ to construct the map
\[
	h: \CF(L_0, \underline{L}, V) \otimes \CF(V^T, L_1) \longrightarrow \CF(L_0, \underline{L}, \tau_V L_1),
\]
\[
	h(x \otimes y) = \tilde{\mu}_2(x, k(y)) + \tilde{\mu}_3(x,y, c),
\]
where $\tilde{\mu}_2$ (resp. $\tilde{\mu}_3$) is the quilted triangle (resp. quilted rectangle) map pictured in Figure \ref{fig:mu2tilde} (resp. Figure \ref{fig:mu3tilde}). By a routine calculation, one may show that $h$ defines a chain nullhomotopy of $C\Phi_1 \circ C\Phi_0$.

\begin{figure}[h]
	\centering
\begin{minipage}{.45\textwidth}
	\hspace{.15in}
	\includegraphics{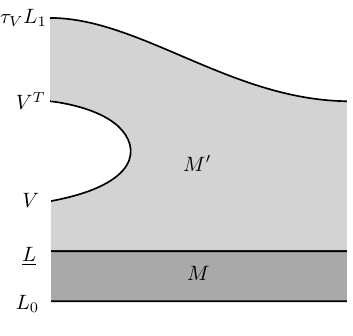}
	\caption{Quilted triangle map $\tilde{\mu}_2$.}
	\label{fig:mu2tilde}
\end{minipage}
\begin{minipage}{.45\textwidth}
	\hspace{.25in}
	\includegraphics{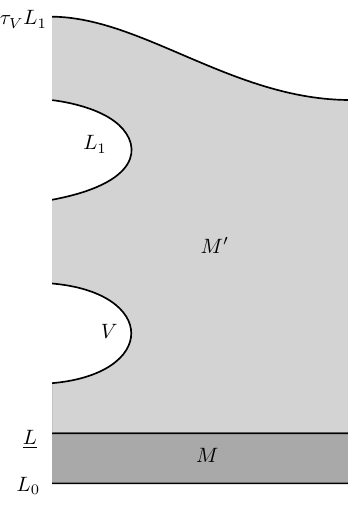}
	\caption{Quilted rectangle map $\tilde{\mu}_3$.}
	\label{fig:mu3tilde}
\end{minipage}
\end{figure}

The maps $C\Phi_0$, $C\Phi_1$, and $h$ turn out to fit into a general homological algebraic framework that allow one to establish the existence of an exact triangle. We recall some terminology necessary for the statement of the next result. An {\bf $\R$-graded $\Lambda$-module} is an $\Lambda$-module $M$ with a direct sum decomposition
\[
	M = \bigoplus_{s \in \R} M_s.
\]
The {\bf support} of $M$ is the set of real numbers
\[
	\mathrm{supp}(M) = \{s \in \R \mid M_s \neq 0\}.
\]
Given an interval $I \subset \R$, we say that $M$ has {\bf gap} $I$ if $s, t \in \mathrm{supp}(M)$ implies that $|s - t| \notin I$. Finally, a $\Lambda$-linear map $f: M \longrightarrow M'$ of $\R$-graded $R$-modules has {\bf order $I$} if
\[
	f(M_s) \subset \bigoplus_{t \in I} M_{s+t}^\prime
\]
for all $s \in \R$.
 
\begin{lem}
\textup{(Double Mapping Cone Lemma \cite[Lemma 5.4]{symp-gysin} \cite[Lemma 5.9]{fiberedtriangle})} Let $\varepsilon > 0$ and suppose that $(E_0, \delta_0)$, $(E_1, \delta_1)$, and $(E_2, \delta_2)$ are free, finitely generated chain complexes of $\R$-graded $\Lambda$-modules. Suppose we have chain maps $f: E_0 \longrightarrow E_1$ and $g: E_1 \longrightarrow E_2$ and a chain nullhomotopy $h: E_0 \longrightarrow E_2$ of $g \circ f$. Assume that this data satisfies the following conditions:
\begin{itemize}
	\item[(1)] $\delta_0$, $\delta_1$, and $\delta_2$ all have order $[2\varepsilon, \infty)$.
	\item[(2)] We have decompositions $f = f_0 + f_1$, $g = g_0 + g_1$, and $h = h_0 + h_1$ such that $f_0$ has order $[0, \varepsilon)$, $g_0$ and $h_0$ have order $0$, and $f_1$, $g_1$, and $h_1$ all have order $[2\varepsilon, \infty)$.
	\item[(3)] $0 \longrightarrow E_0 \xrightarrow{~f_0~} E_1 \xrightarrow{~g_0~} E_2 \longrightarrow 0$ is a short exact sequence of abelian groups.
\end{itemize}
Then $(h, g): \Cone(f) \longrightarrow E_2$ is a quasi-isomorphism.
\label{lem:double-cone}
\end{lem}

To relate this to our intended application, we need to explain how to give the Floer chain complexes $\R$-gradings. Given a ring $R$ (which will be $\Z$ or $\Z/2$ for us), let $\Lambda_R$ denote the ring
\[
	\Lambda_R = \left\{\sum_{k = 1}^n r_k q^{s_k} \mid r_k \in R, s_k \in \R\right\},
\]
where $q$ is a formal variable and the ring multiplication is defined on monomials by
\[
	(rq^s)(r'q^{s^\prime}) = (rr')q^{s + s^\prime}.
\]
$\Lambda_R$ admits the obvious $\R$-grading
\[
	\Lambda_R = \bigoplus_{s \in \R} \{rq^s \mid r \in R\}.
\]

Given two simply connected, transversely intersecting monotone Lagrangian submanifolds $L_0$, $L_1$ of a monotone symplectic manifold $M$, we define the {\bf Lagrangian Floer chain complex with $\Lambda_R$ coefficients} to have the obvious chain groups
\[
	\CF(L_0, L_1; \Lambda_R) = \bigoplus_{x \in L_0 \cap L_1} \Lambda_R \langle x \rangle
\]
but the modified differential defined on generators by
\[
	\partial_{\Lambda_R} x = \sum_{\substack{y \in L_0, L_1 \\ u \in \mathcal{M}(x,y)_0}} o(u)q^{E(u)}y,
\]
where $o(u)$ is the local orientation of $\mathcal{M}(x,y)_0$ at $u$ ($\pm 1$ if using $R = \Z$, $1$ if using $R = \Z/2$), and $E(u)$ is the energy of the pseudoholomorphic strip $u$. There is an obvious extension of the Floer chain complex with $\Lambda_R$ coefficients to the setting of quilted Floer homology, where the energy of a pseudoholomorphic quilt is simply the sum of the energies of each of its components.

The maps $C\Phi_0$, $C\Phi_1$, $k$, and $h$ all have obvious extensions to $\Lambda_R$ coefficients by incorporating the energy of the quilted pseudoholomorphic maps/sections they count into their definitions. One may show that these modified maps satisfy the hypotheses of Lemma \ref{lem:double-cone}, and it follows that $(h, C\Phi_1): \Cone(C\Phi_0) \longrightarrow \CF(L_0, \underline{L}, \tau_V L_1; \Lambda_R)$ is a quasi-isomorphism. Since mapping cones naturally fit into exact triangles, we obtain the desired exact triangle in Floer homology with $\Lambda_R$ coefficients.

To obtain the desired result for $R = \Z$ or $R = \Z/2$ coefficients, we note that the natural map
\[
	\HF(L_0, L_1; \Lambda_R)/(q - 1) \longrightarrow \HF(L_0, L_1)
\]
is an isomorphism (cf. \cite[Remark 4.3]{fiberedtriangle}). Therefore we conclude the following:

\begin{prop}
\label{prop:seidel-triangle}
Suppose $L_0$ is a monotone Lagrangian submanifold of the monotone symplectic manifold $M$, $L_1$ and $V$ are monotone Lagrangian submanifolds of the monotone symplectic manifold $M'$ with $V$ a sphere, and that $\underline{L}$ is a monotone generalized Lagrangian correspondence $\underline{L}: M \longrightarrow M'$. Then there is an exact triangle of quilted Floer homology groups
\[
	\xymatrix{\HF(L_0, \underline{L}, V) \otimes \HF(V^T, L_1) \ar[rr]^{\Phi_0} & & \HF(L_0, \underline{L}, L_1) \ar[dl]^{\Phi_1} \\
	 & \HF(L_0, \underline{L}, \tau_V L_1) \ar[ul] & } 
\]
\end{prop}

\subsection{The Surgery Exact Triangle}

\label{sect:triangle}

We now use Seidel's exact triangle, in the form of Proposition \ref{prop:seidel-triangle}, to establish the surgery exact triangle in symplectic instanton homology.

Let $Y$ be a closed, oriented $3$-manifold. Given a framed knot $\bbK = (K, \lambda)$ in $Y$, we may choose a Heegaard triple $(\Sigma_{g+1}, \bfalpha, \bfbeta, \bfgamma, z)$ subordinate to a bouquet for $\bbK$ (cf. \cite[Section 8.3]{horton1}). We may also choose a Heegaard triple $(\Sigma_{g+1}, \bfalpha, \bfbeta, \bfdelta, z)$ subordinate to $(K, \lambda + \mu)$. The quadruple diagram $\calH = (\Sigma_{g+1}, \bfalpha, \bfbeta, \bfgamma, \bfdelta, z)$ satisfies the following conditions:
\begin{itemize}
	\item $\beta_{g+1}$ is a meridian for $K$.
	\item $\gamma_{g+1}$ represents the framing $\lambda$ of $K$ and $\delta_{g+1}$ represents the framing $\lambda + \mu$ of $K$.
	\item $(\Sigma_{g+1}, \{\alpha_1, \dots, \alpha_{g+1}\}, \{\beta_1, \dots, \beta_{g}\}, z)$ represents the complement of $K$ in $Y$.
	\item $\gamma_k = \beta_k$ and $\delta_k = \beta_k$ for $k \neq g+1$.
	\item $\calH_{\alpha\beta}$ represents $Y$, $\calH_{\alpha\gamma}$ represents $Y_\lambda$, and $\calH_{\alpha\delta}$ represents $Y_{\lambda + \mu}$.
\end{itemize}

In particular, $\calH_{\alpha\beta}$, $\calH_{\alpha\gamma}$, and $\calH_{\alpha\delta}$ induce length $2g+2$ Cerf decompositions of $Y^\theta$ where we first attach $(g+1)$ $1$-handles with attaching cycles in $\Sigma_{g+1}$ given by $\bfalpha$, and then we attach $(g+1)$ $2$-handles with attaching cycles in $\Sigma_{g+1}$ given by $\bfbeta$, $\bfgamma$, or $\bfdelta$ (depending on which diagram we are considering). The conditions on $\bfbeta$, $\bfgamma$, and $\bfdelta$ imply that these three Cerf decompositions differ only in the final $2$-handle attachment.

The final $2$-handle attachments may be represented by Lagrangian submanifolds $L_{\beta_{g+1}}$, $L_{\gamma_{g+1}}$, and $L_{\delta_{g+1}}$ of $\scrR_{1,3}$. Now note that we in fact have $\delta_{g+1} = \tau_{\beta_{g+1}} \gamma_{g+1}$, so that if
\[
	V = \{[\rho] \in \scrR_{1,3} \mid \rho(\beta_{g+1}) = -I\},
\]
then on the level of Lagrangians
\[
	L_{\delta_{g+1}} = \tau_V L_{\gamma_{g+1}}.
\]
Writing $\underline{L}$ for the Lagrangian correspondence coming from the first $g$ $\beta$- (equivalently, $\gamma$- or $\delta$-) handle attachments, Seidel's exact triangle reads
\[
	\xymatrix{\HF(L_\alpha, \underline{L}, V) \otimes \HF(V^T, L_{\gamma_{g+1}}) \ar[rr]^{\Phi_0} & & \HF(L_\alpha, \underline{L}, \tau_V L_{\gamma_{g+1}}) \ar[dl]^{\Phi_1} \\
	 & \HF(L_\alpha, \underline{L}, L_{\gamma_{g+1}}) \ar[ul] & } 
\]
Two of these Floer homology groups may be immediately identified:
\[
	\HF(L_\alpha, \underline{L}, L_{\gamma_{g+1}}) = \SI(Y_{\lambda}), \quad\quad \HF(L_\alpha, \underline{L}, \tau_V L_{\gamma_{g+1}}) = \SI(Y_{\lambda+\mu}).
\]
The remaining Floer homology group is identified thusly:
\begin{align*}
	\HF(L_\alpha, \underline{L}, V) \otimes \HF(V^T, L_{\gamma_{g+1}}) = \SI(Y, \omega_K) \otimes \Z = \SI(Y, \omega_K),
\end{align*}
where $\HF(L_\alpha, \underline{L}, V) = \SI(Y, \omega_K)$ since $\beta_{g+1}$ is a meridian of $K$ and $V$ precisely represents pairs of flat connections in $\scrR_{1,3}^- \times \scrR_{0,3}$ which simultaneously extend to flat connections on the nontrivial $\SO(3)$ bundle over the bordism $\Sigma_{1,3} \longrightarrow \Sigma_{0,3}$ whose second Stiefel-Whitney class is Poincar\'e dual to $K$, and $\HF(V^T, L_{\gamma_{g+1}}) \cong \Z$ because the only intersection point of the two Lagrangians is $[-I, \pm I, \bfi, \bfj, -\bfk] \in \scrR_{1,3}$ (where the $\pm$ is determined by the framing $\lambda$).

It follows that we have a surgery exact triangle
\[
	\xymatrix{\SI(Y, \omega_K) \ar[rr] & & \SI(Y_\lambda) \ar[dl] \\
	 & \SI(Y_{\lambda + \mu}) \ar[ul] & } 
\]
In fact, given a nontrivial $\SO(3)$-bundle $P \longrightarrow Y$, we get three induced homology classes Poincar\'e dual to the induced bundles on $Y$, $Y_\lambda$, and $Y_{\lambda + \mu}$:
\[
	\omega = \PD(w_2(P)) \in H_1(Y; \F_2), \quad\quad\quad \omega_\lambda = i_\ast^\lambda \PD(i^\ast_{Y \setminus \nu K} w_2(P)) \in H_1(Y_\lambda; \F_2),
\]
\[
	\omega_{\lambda+\mu} = i_\ast^{\lambda+\mu} \PD(i^\ast_{Y \setminus \nu K} w_2(P)) \in H_1(Y_{\lambda+\mu}; \F_2).
\]
These may all be represented by embedded curves disjoint from $\nu K$ (indeed, they can be represented by curves lying entirely in the $\alpha$-handlebody), so in the above setup we may incorporate nontrivial bundles into the generalized Lagrangian correspondence $\underline{L}$ without any modification to the argument. Hence we have more generally an exact triangle
\[
	\xymatrix{\SI(Y, \omega + \omega_K) \ar[rr] & & \SI(Y_\lambda, \omega_\lambda) \ar[dl] \\
	 & \SI(Y_{\lambda + \mu}, \omega_{\lambda+\mu}) \ar[ul] & } 
\]

\begin{rem}
Recall that symplectic instanton homology $\SI(Y, \omega) = \HF(L_\alpha^{\omega}, L_\beta)$ admits a $\Z/2$-grading as follows. Assume (perhaps after perturbation) that $L_\alpha^\omega$ and $L_\beta$ intersect transversely, and fix orientations of $L_\alpha^\omega$, $L_\beta$, and $\scrR_{g,3}$, so that for any $x \in L_\alpha^\omega \cap L_\beta$, the local intersection number $(L_\alpha^\omega \cdot L_\beta)_x \in \{\pm 1\}$ is well-defined. Considered as a generator of $\CF(L_\alpha^\omega, L_\beta)$, the $\Z/2$-grading of $x$ is $0$ if $(L_\alpha^\omega \cdot L_\beta)_x = +1$ and $1$ otherwise. The maps in Seidel's exact triangle have well-defined gradings; in our context the gradings are as indicated below:
\[
	\xymatrix{\SI(Y, \omega + \omega_K) \ar[rr]^{[0]} & & \SI(Y_\lambda, \omega_\lambda) \ar[dl]^{[0]} \\
	 & \SI(Y_{\lambda + \mu}, \omega_{\lambda+\mu}) \ar[ul]^{[1]} & } 
\]
\end{rem}

\section{Computations}

\label{sect-comp}

In this section, we give a formula for the Euler characteristic of $\SI(Y)$, and compute $\SI(Y)$ for $S^3$, $S^2 \times S^1$, lens spaces, and connected sums. We also define a concept of an ``instanton $L$-space'' and give several infinite families of examples.

\subsection{Connected Sums}

In Floer homology theories for $3$-manifolds, connected sums of $3$-manifolds tend to correspond to tensor products of Floer chain groups. This is indeed the case for symplectic instanton homology as well: we can generalize the proof of Theorem \ref{thm:stabilization-inv} (which essentially says $\CSI(Y \# S^3) \cong \CSI(Y) \otimes \CSI(S^3) \cong \CSI(Y)$) to establish the behavior of symplectic instanton chain groups under connected sum of $3$-manifolds.

\begin{thm}
\label{thm:connectsum}
For any closed, oriented $3$-manifolds $Y$ and $Y'$ equipped with $\SO(3)$-bundles $\omega$ and $\omega'$, the symplectic instanton chain complex for $(Y \# Y', \omega \cup \omega')$ satisfies the following K\"unneth principle:
\[
	\CSI(Y \# Y', \omega \cup \omega') \cong \CSI(Y, \omega) \otimes \CSI(Y', \omega').
\]
\end{thm}

\begin{proof}
Given Heegaard diagrams $\calH = (\Sigma_g, \bfalpha, \bfbeta, z)$ for $Y$ and $\calH' = (\Sigma_{g^\prime}, \bfalpha', \bfbeta', z')$ for $Y'$, we get a Heegaard diagram $\calH \# \calH' = (\Sigma_{g+g^\prime}, \bfalpha \cup \bfalpha', \bfbeta \cup \bfbeta', z'')$ for $Y \# Y'$, where we perform the connect sum by removing neighborhoods of $z$ and $z'$, and $z'' \in \Sigma_{g + g^\prime}$ is a point in the connect sum region.

Note that in $\calH \# \calH'$, the attaching regions for the $\alpha$- and $\beta$-handles are completely disjoint from the attaching regions for the $\alpha'$- and $\beta'$-handles. Therefore by a sequence of critical point switches, we can construct $Y \# Y'$ by attaching handles in the following order: $\alpha$-handles, $\beta$-handles, $\alpha'$-handles, $\beta'$-handles (if the attaching regions were not disjoint, in general we could only attach $(\alpha \cup \alpha')$-handles and then $(\beta \cup \beta')$-handles). Furthermore, the embedded loops $\omega$ and $\omega'$ can be chosen to lie in the parts of the Cerf decomposition involving the $\alpha$- and $\alpha^\prime$-handle attachments, respectively. Therefore we get a sequence of Lagrangian correspondences
\[
	\scrR_{0,3} \xrightarrow{~L_{\alpha}^\omega~} \scrR_{g,3} \xrightarrow{~L_\beta~} \scrR_{0,3} \xrightarrow{~L_{\alpha^\prime}^{\omega^\prime}~} \scrR_{g^\prime,3} \xrightarrow{~L_{\beta^\prime}~} \scrR_{0,3},
\]
which by Theorem \ref{thm:omega-cerf-invariance} is geometrically equivalent to the sequence given by the Heegaard diagram $\calH \# \calH'$:
\[
	\scrR_{0,3} \xrightarrow{~L_{\alpha\cup\alpha^\prime}^{\omega \cup \omega^\prime}~} \scrR_{g+g^\prime,3} \xrightarrow{~L_{\beta\cup\beta^\prime}~} \scrR_{0,3}.
\]
Hence we have an identification of quilted Floer complexes
\[
	\CF(L_{\alpha\cup\alpha^\prime}^{\omega \cup \omega^\prime}, L_{\beta\cup\beta^\prime}) \cong \CF(L_\alpha^\omega, L_\beta, L_{\alpha^\prime}^{\omega^\prime}, L_{\beta^\prime}).
\]
On the other hand, $L_\beta$ and $L_{\alpha^\prime}^{\omega^\prime}$ have embedded geometric composition, and since $\scrR_{0,3} = \{\mathrm{pt}\}$, it is clear that $L_\beta \circ L_{\alpha^\prime}^{\omega^\prime} \cong L_\beta \times L_{\alpha^\prime}^{\omega^\prime} \subset \scrR_{g,3} \times \scrR_{g^\prime,3}$. Therefore
\begin{align*}
	\CF(L_{\alpha\cup\alpha^\prime}^{\omega \cup \omega^\prime}, L_{\beta\cup\beta^\prime}) & \cong \CF(L_\alpha^\omega, L_\beta \times L_{\alpha^\prime}^{\omega^\prime}, L_{\beta^\prime}) \\
	 & = \CF(L_\alpha^\omega \times L_{\beta^\prime}, L_\beta \times L_{\alpha^\prime}^{\omega^\prime}) \\
	 & \cong \CF(L_\alpha^\omega, L_\beta) \otimes \CF( L_{\alpha^\prime}^{\omega^\prime}, L_{\beta^\prime}). \qedhere
\end{align*}
\end{proof}

\subsection{Euler Characteristic and an Absolute $\Z/2$-Grading}

Since it is useful for some later computations, we start by determining the Euler characteristic of symplectic instanton homology. We proceed in two steps: first work with a trivial $\SU(2)$-bundle, then use a topological argument to generalize to arbitrary $\SO(3)$-bundles.

\begin{thm}
\label{thm:eulerchar}
For any closed, oriented $3$-manifold $Y$,
\[
	\chi(\SI(Y)) = \begin{cases} \pm |H_1(Y;\Z)|, & \text{if } b_1(Y) = 0, \\ 0, & \text{otherwise.} \end{cases}
\]
\end{thm}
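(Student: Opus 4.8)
The plan is to compute $\chi(\SI(Y))$ as a signed count of points in $L_\alpha \cap L_\beta$ for a suitably perturbed Heegaard diagram, and to identify this count with the Casson-type invariant $\pm|H_1(Y;\Z)|$ in the rational homology sphere case. By Theorem~\ref{thm:lagrangianintersection}, after a Hamiltonian (equivalently, holonomy) perturbation the intersection $L_\alpha \cap L_\beta$ is the perturbed $\SU(2)$-representation space of $\pi_1(Y)$, and $\chi(\SI(Y))$ is the algebraic count of these intersection points with signs coming from the $\Z/2$-grading of Section~\ref{sect:gradings}. The first step is to split off the contribution of the trivial representation $\theta$, which by Corollary~\ref{cor:trivialint} is always an isolated transverse intersection point when $b_1(Y) = 0$; its sign contributes a global $\pm 1$ that I will track but not worry about pinning down (hence the $\pm$ in the statement).

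First I would handle the case $b_1(Y) > 0$. Here the idea is that $\Hom(\pi_1(Y),\SU(2))$ is positive-dimensional (it contains a torus of reducible representations coming from $H^1(Y;\R) \neq 0$), and one can choose the holonomy perturbation so that $L_\alpha$ and $L_\beta$ become disjoint, or else the perturbed intersection bounds. Concretely, since $S^1 \hookrightarrow \SU(2)$ as noncentral elements gives a circle's worth of representations that can be pushed off $L_\alpha \cap L_\beta$ entirely, a generic small Hamiltonian isotopy makes $L_\alpha \cap L_\beta = \varnothing$, so $\CSI$ is the zero complex and $\chi(\SI(Y)) = 0$. Alternatively, and more robustly, I would invoke the fact that $\chi$ is computed by the unperturbed intersection number of $L_\alpha$ and $L_\beta$ in the sense of a homological pairing, and that $[L_\alpha]\cdot[L_\beta] = 0$ in $H_*(\M_{g,3})$ when $b_1 > 0$ — but the disjointness argument is cleaner and I'd prefer it.

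For the case $b_1(Y) = 0$, the main step is to relate the signed count of perturbed $\SU(2)$-representations of $\pi_1(Y)$ to $|H_1(Y;\Z)|$. The model to follow is Taubes's theorem that Casson's invariant equals (half of) the signed count of irreducible flat $\SU(2)$-connections, together with the Alexander-polynomial / Reidemeister-torsion interpretation: the signed count of \emph{all} (including abelian) perturbed flat connections, weighted appropriately, is the order of $H_1$. I would argue as follows: isotope $\bfalpha,\bfbeta$ to general position; the intersection points of $L_\alpha \cap L_\beta$ lying in the \emph{abelian} (reducible) locus correspond, after the standard identification $L_\alpha \cap L_\beta \cong \Hom(\pi_1(Y),\SU(2))$, to characters of $H_1(Y;\Z)$, and a local computation at each such point (using that a perturbation of $\R P^3 = \SU(2)/\{\pm 1\}$-type components and $S^2$-components contributes its Euler characteristic, namely $0$ for $\R P^3$'s and $\pm 2$ for $S^2$'s — but in the rational homology sphere case the reducible locus is $|H_1|$ isolated points once the perturbation is turned on) shows they contribute $\pm|H_1(Y;\Z)|$ in total, while the irreducible points cancel in pairs under the sign rule or contribute to a term that vanishes mod the relevant identification; since we only want $\chi$ up to sign, I would match this against the known Euler characteristic of Donaldson's (or Floer's) instanton homology and conclude. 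The cleanest route is probably: show $\chi(\SI(Y))$ equals the Euler characteristic of the (Morse-Bott) chain complex built from $\Hom(\pi_1(Y),\SU(2))/\mathrm{conj}$, which by Corollary~\ref{cor:trivialint} and the structure result after holonomy perturbation (isolated point $\sqcup\ S^2$'s $\sqcup\ \R P^3$'s) equals $\pm 1 \cdot(\text{count weighted by }\chi(S^2)=2,\ \chi(\R P^3)=0)$, and then identify this weighted count with $\pm|H_1(Y;\Z)|$ via the surgery formula / the fact that it is multiplicative under connected sum and agrees with $|H_1|$ on lens spaces, both of which are either proved in Section~\ref{sect-comp} already or reduce to the lens space and $S^3$ computations.

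\textbf{Main obstacle.} The hard part will be the sign bookkeeping and the precise local contribution of the reducible locus: showing that after the holonomy perturbation each reducible character of $\pi_1(Y)$ (there are $|H_1(Y;\Z)|$ of them when $b_1 = 0$, up to the $\mathbf{i}\leftrightarrow -\mathbf{i}$ symmetry) contributes $+1$ to $\chi$ with a consistent sign, and that the $S^2$-family corrections either do not appear (because in the rational homology sphere case the reducibles are already isolated) or contribute with controllable sign. I expect to sidestep the most delicate part by only claiming the answer up to an overall sign and by leaning on multiplicativity under connected sum (Theorem~\ref{thm:connectsum}) plus the explicit lens-space computations to fix $|\chi| = |H_1(Y;\Z)|$, rather than computing each local sign from scratch.
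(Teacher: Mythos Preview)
Your approach differs substantially from the paper's. The paper does not attempt to count representations of $\pi_1(Y)$ directly; instead it constructs an explicit homeomorphism between an open neighborhood of the locus $\{\prod_k [A_k,B_k]=I\}$ in $\M_{g,3}$ and an open neighborhood of the corresponding locus in the classical space $R^\ast = \Hom(\pi_1(\Sigma_g \setminus \{\text{pt}\}), \SU(2))$, carrying the handlebody Lagrangians to one another. This reduces $(L_\alpha \cdot L_\beta)_{\M_{g,3}}$ to $(L_\alpha \cdot L_\beta)_{R^\ast}$, and the latter is a known result (Akbulut--McCarthy, \cite[Proposition III.1.1]{akbulut-mccarthy}). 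No case split on $b_1$, no representation-counting, and no sign bookkeeping is needed beyond what is already in the literature.

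Your proposal has two genuine gaps. First, for $b_1(Y) > 0$, the claim that a generic Hamiltonian isotopy makes $L_\alpha \cap L_\beta = \varnothing$ is circular: Hamiltonian displacement is possible only if the intersection number is already zero, which is precisely what you are trying to prove; the mere existence of a positive-dimensional reducible locus does not force $[L_\alpha]\cdot[L_\beta]=0$. Second, the fallback of using multiplicativity under connected sum together with the lens-space computations cannot determine $\chi(\SI(Y))$ for a general rational homology sphere, since such manifolds are not generated under $\#$ by lens spaces. Your Morse--Bott idea --- that each irreducible conjugacy class is a copy of $\R P^3$ and hence contributes $\chi(\R P^3)=0$, while the abelian locus accounts for $\pm|H_1|$ --- is correct intuition and is essentially what underlies the cited Akbulut--McCarthy result (indeed, summing $\chi$ over the central points and the $S^2$-orbits of non-central abelian characters gives exactly $|H_1|$), but carrying it through requires the sign consistency you yourself flag as the main obstacle. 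The paper's neighborhood comparison sidesteps all of this.
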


\begin{proof}
In Lagrangian Floer homology, the Euler characteristic (with respect to the $\Z/2$-grading determined by choosing orientations of the Lagrangians) is given by $\chi(\HF(L_0, L_1)) = L_0 \cdot L_1$, \emph{i.e.} Lagrangian Floer homology is a categorification of oriented intersection number. Therefore we really just need to fix a Heegaard splitting $Y = H_\alpha \cup_{\Sigma_g} H_\beta$ and determine the intersection number of $L_\alpha$ and $L_\beta$ in $\scrR_{g,3}$.

Recall the representation space
\[
	R^\ast = \Hom(\pi_1(\Sigma_g \setminus \{\text{pt}\}), \SU(2)) \cong \SU(2)^{2g}
\]
and Theorem \ref{thm:zero-level-nbhd}, which states that there are open sets $\mathscr{U} \subset \scrR_{g,3}$ and $U \subset R^\ast$ and a diffeomorphism $h: U \longrightarrow \mathscr{U}$. The Heegaard splitting $Y = H_\alpha \cup_{\Sigma_g} H_\beta$ gives two Lagrangians
\[
	L_\alpha^\ast = \im(\Hom(\pi_1(H_\alpha), \SU(2)) \longrightarrow R^\ast),
\]
\[
	L_\beta^\ast = \im(\Hom(\pi_1(H_\beta), \SU(2)) \longrightarrow R^\ast),
\]
which are embedded copies of $(S^3)^g$ in $R^\ast$. The diffeomorphism $h$ satisfies
\[
	h(L_\alpha^\ast) = L_\alpha, \quad\quad h(L_\beta^\ast) = L_\beta,
\]
and therefore the intersection numbers $L_\alpha \cdot L_\beta$ and $L_\alpha^\ast \cdot L_\beta^\ast$ are equal. It is well-known that the intersection number $L_\alpha^\ast \cdot L_\beta^\ast$ in $R^\ast$ is given by
\[
	L_\alpha^\ast \cdot L_\beta^\ast = \begin{cases} \pm |H_1(Y;\Z)|, & \text{if } b_1(Y) = 0, \\ 0, & \text{otherwise} \end{cases}
\]
(see \cite[Chapter III]{akbulut-mccarthy}, especially Proposition III.1.1). Since $\chi(\SI(Y)) = L_\alpha \cdot L_\beta$ by the first paragraph of the proof, the desired result follows.
\end{proof}

\begin{thm}
\label{thm:euler-char}
For any closed, oriented $3$-manifold $Y$ and any $\SO(3)$-bundle $\omega$ on $Y$,
\[
	\chi(\SI(Y,\omega)) = \begin{cases} \pm |H_1(Y; \Z)|, & \text{if } b_1(Y) = 0, \\ 0, & \text{otherwise.} \end{cases}
\]
\end{thm}

\begin{proof}
Scaduto's computation of the Euler characteristic of framed instanton homology in \cite[Corollary 1.4]{scaduto} essentially applies word-for-word here; we reproduce his argument. The proof proceeds by considering cases of increasing generality.

\emph{Case 1.} First, suppose that $Y$ is an integral homology $3$-sphere. Then $H_1(Y; \F_2) = 0$, so that we necessarily have $\omega = 0$ and $\chi(\SI(Y)) = 1$ by Theorem \ref{thm:eulerchar}.

\emph{Case 2.} Next, suppose that $Y$ is a rational homology $3$-sphere obtained by integral surgery on an algebraically split link $L = L_1 \cup \cdots \cup L_m$. Hence $Y = S^3_{(p_1,\dots,p_m)}(L)$ for some integers $p_k \in \Z$, $k = 1, \dots, m$ and furthermore $|H_1(Y; \Z)| = |p_1 \cdots p_m|$ since $L$ is algebraically split. Suppose the Euler characteristic formula holds for rational homology spheres $M$ which are obtained by integral surgery on an algebraically split link and satisfy $|H_1(M;\Z)| < |p_1 \cdots p_m|$. As Case 1 establishes the base case for this induction, we may assume $Y$ is not an integral homology $3$-sphere and therefore without loss of generality $p_1 > 1$. Note that
\[
	(S^3_{(p_1 - 1,p_2, \dots, p_m)}(L), S^3_{(p_1,p_2,\dots,p_m)}(L), S^3_{(\infty,p_2,\dots,p_m)}(L))
\]
is a surgery triad (cf. Example \ref{ex:n-triad}), and therefore we get a surgery exact triangle
\[
	\xymatrix{\SI(S^3_{(p_1 - 1,p_2, \dots, p_m)}(L), \omega \cup \omega_L) \ar[rr]^{[0]} & & \SI(S^3_{(p_1,p_2,\dots,p_m)}(L), \omega) \ar[dl]^{[0]} \\
	 & \SI(S^3_{(\infty,p_2,\dots,p_m)}(L), \omega) \ar[ul]^{[1]} & }
\]
where $[\cdot]$ denotes the mod $2$ degree of each map and $\omega \in H_1(Y; \F_2)$ is an arbitrary homology class in $Y$, which induces homology classes in the other two spaces. By exactness and the degree of the maps, we have that
\begin{align*}
	\chi(\SI(Y,\omega)) & = \chi(\SI(S^3_{(p_1 - 1,p_2, \dots, p_m)}(L), \omega \cup \omega_L)) + \chi(\SI(S^3_{(\infty,p_2,\dots,p_m)}(L), \omega)) \\
	& = |(p_1 - 1)p_2 \cdots p_m| + |p_2 \cdots p_m| \\
	& = |p_1 \cdots p_m|,
\end{align*}
where in the second line we used the inductive hypothesis. Case 2 therefore is true by induction.

\emph{Case 3.} Now suppose $Y$ is any rational homology $3$-sphere. By \cite[Corollary 2.5]{ohtsuki}, there exists an algebraically split link $L \subset S^3$ such that there is an integral framing $(p_1, \dots, p_m)$ with
\[
	S^3_{(p_1, \dots, p_m)}(L) \cong Y \# L(n_1, 1) \# \cdots \# L(n_k, 1)
\]
for some positive integers $n_1, \dots, n_k$. Note that both $S^3_{(p_1, \dots, p_m)}(L)$ and $L(n_1, 1) \# \cdots \#\allowbreak L(n_k,1)$ fall under Case 2, so that the K\"unneth principle for connected sums implies that
\begin{align*}
	\chi(\SI(Y,\omega)) & = \frac{\chi(\SI(S^3_{(p_1, \dots, p_m)}(L), \omega \cup \omega_1 \cup \cdots \cup \omega_k))}{\chi(\SI(L(n_1, 1) \# \cdots \# L(n_k, 1), \omega_1 \cup \cdots \cup \omega_k))} \\
	 & = \frac{|H_1(S^3_{(p_1, \dots, p_m)}(L); \Z)|}{|H_1(L(n_1, 1) \# \cdots \# L(n_k,1); \Z)|} \\
	 & = |H_1(Y; \Z)|.
\end{align*}
Therefore the Euler characteristic formula holds for all rational homology $3$-spheres.

\emph{Case $b_1(Y) > 0$.} Now suppose the $3$-manifold $Y$ is such that $b_1(Y) > 0$. Then we wish to show that $\chi(\SI(Y, \omega)) = 0$ for all $\omega \in H_1(Y; \F_2)$. Note that in this case, there is a $3$-manifold $M$ with $b_1(M) = b_1(Y) - 1$ and a framed knot $(K, \lambda)$ in $M$ such that $Y \cong M_\lambda(K)$. Since $(M_{\lambda + \mu}(K), M, Y)$ is a surgery triad, for any $\omega \in H_1(Y; \F_2)$ we have a surgery exact triangle
\[
	\xymatrix{\SI(M_{\lambda + \mu}(K), \omega + \omega_K) \ar[rr]^{[0]} & & \SI(M, \omega) \ar[dl]^{[0]} \\
	 & \SI(Y, \omega) \ar[ul]^{[1]} & }
\]
where again $[\cdot]$ indicates the mod $2$ degree of the map. Then it follows that
\[
	\chi(\SI(Y, \omega)) = \chi(\SI(M_{\lambda + \mu}(K), \omega + \omega_K)) - \chi(\SI(M, \omega)).
\]
Applying induction and using this equation will allow us to make our conclusion. For the base case, if $b_1(Y) = 1$, then $M$ and $M_{\lambda + \mu}(K)$ are rational homology spheres with $|H_1(M; \Z)| = |H_1(M_{\lambda+\mu}(K); \Z)|$, and therefore Case 3 implies that $\chi(\SI(Y, \omega)) = 0$. Now suppose that the desired statement is true for all $3$-manifolds with first Betti number less than $n$. Then if $b_1(Y) = n$, the inductive hypothesis implies that $\chi(\SI(M_{\lambda + \mu}(K), \omega + \omega_K)) = \chi(\SI(M, \omega)) = 0$ and the result follows.
\end{proof}

Recall (Section \ref{sect:gradings}) that $\SI(Y)$ admits a $\Z/2$-grading once we fix orientations of $L_\alpha$ and $L_\beta$. Theorem \ref{thm:eulerchar} allows us to make a specific choice of orientations for the Lagrangians when $Y$ is a rational homology $3$-sphere. Namely, we orient the Lagrangians $L_\alpha$ and $L_\beta$ so that $L_\alpha \cdot L_\beta = +|H_1(Y; \Z)|$.

\subsection{Elementary Examples}

Direct computations of Lagrangian Floer homology are typically impossible. Nevertheless, in favorable circumstances, we have enough information and tools to indirectly work out some examples.

\begin{prop}
$\SI(S^3) \cong \Z$.
\end{prop}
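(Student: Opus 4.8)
The plan is to take the genus $1$ Heegaard diagram for $S^3$ in which $\bfalpha$ and $\bfbeta$ are a single pair of simple closed curves on $\Sigma_1$ meeting transversely in exactly one point. For this diagram, Theorem \ref{thm:lagrangianintersection} identifies $L_\alpha \cap L_\beta$ with $\Hom(\pi_1(S^3), \SU(2))$, which is a single point, namely the trivial representation $\theta$. Hence $\CSI(\calH)$ is freely generated by the single element $\theta$, and since there is nothing for the differential to map to, $\partial = 0$ and $\SI(S^3) = H_\ast(\CSI(\calH)) \cong \Z\langle\theta\rangle \cong \Z$.

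The one subtlety is transversality: the chain complex is only defined once $L_\alpha$ and $L_\beta$ meet transversely, possibly after a Hamiltonian perturbation, and we must make sure the perturbation does not change the count of intersection points. But $S^3$ is a rational homology sphere, so by Corollary \ref{cor:trivialint} the trivial intersection point $\theta$ is already transverse, and by Theorem \ref{thm:lagrangianintersection} it is the only intersection point. Thus no perturbation is needed at all: $L_\alpha \pitchfork L_\beta = \{\theta\}$ on the nose. Since invariance of $\SI$ under the choice of Heegaard diagram has been established (Theorem \ref{thm:stabilization-inv} together with the handleslide and isotopy invariance results, or alternatively Theorem \ref{thm:cerfinvariance}), this genus $1$ computation suffices to conclude $\SI(S^3) \cong \Z$.

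There is essentially no hard part here; the result is an immediate consequence of the identification $L_\alpha \cap L_\beta \cong \Hom(\pi_1(Y),\SU(2))$ in the special case $Y = S^3$, for which the representation variety is a single (transverse) point. If one wants to be careful about the grading, one notes that with the orientation convention fixed after Theorem \ref{thm:eulerchar} (so that $L_\alpha \cdot L_\beta = +|H_1(S^3;\Z)| = +1$), the single generator $\theta$ sits in $\Z/2$-degree $0$, consistent with $\chi(\SI(S^3)) = 1$ as predicted by Theorem \ref{thm:eulerchar}.
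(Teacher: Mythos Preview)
Your argument is correct, but the paper takes a shorter route: it uses the genus \emph{zero} Heegaard splitting $S^3 = B^3 \cup_{S^2} B^3$, for which the ambient moduli space is $\M_{0,3} \cong \{\mathrm{pt}\}$ and both Lagrangians are forced to be that single point. The chain complex is then trivially $\Z$ in degree $0$, with no need to appeal to Theorem~\ref{thm:lagrangianintersection} or to check transversality via Corollary~\ref{cor:trivialint}. Your genus~$1$ approach is perfectly valid and has the mild advantage of illustrating how the identification $L_\alpha \cap L_\beta \cong \Hom(\pi_1(Y),\SU(2))$ and the transversality criterion for rational homology spheres actually get used in a concrete example; the paper's approach simply sidesteps all of that by working in a zero-dimensional moduli space.
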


\begin{proof}
By taking the genus zero pointed Heegaard diagram for $S^3$, we see that the relevant flat moduli space is $\scrR_{0,3} \cong \{\text{pt}\}$, and the Lagrangians are $L_\alpha \cong \{\text{pt}\} \cong L_\beta$. Hence $\CSI(S^3)$ can be generated by a single point, so that we necessarily have $\SI(S^3) \cong \Z$.
\end{proof}

Before computing $\SI(S^2 \times S^1)$, we recall the following. The self-Floer cohomology of a (monotone) Lagrangian submanifold $L$ in a (monotone) symplectic manifold $M$ admits the structure of a unital algebra. The product is defined by counting pseudoholomorphic triangles with two prescribed vertices, and the unit is defined as a count of pseudoholomorphic disks with one positive boundary puncture. See Appendix \ref{sect:quilts} for a review of how counts of disks with positive/negative boundary punctures define maps between Floer cohomology groups.

\begin{prop}
\label{prop:S1xS2}
As a $\Z/2$-graded unital algebra, we have that
\[
	\SI(S^2 \times S^1, \omega) = \begin{cases} H^{3-\ast}(S^3), & \text{if $\omega$ is trivial,} \\ 0, & \text{if $\omega$ is nontrivial.} \end{cases}
\]
\end{prop}

\begin{proof}
Consider the genus 1 pointed Heegaard diagram $(\Sigma_1, \alpha, \beta, z)$ for $S^2 \times S^1$ where $\alpha$ and $\beta$ are the \emph{same} meridian. Then since $L_\alpha \cap L_\beta = L_\alpha$, $\CSI(\Sigma_1, \alpha, \beta, z)$ is actually the complex for the self Floer homology of $L_\alpha$,
\[
	\CSI(\Sigma_1, \alpha, \beta, z) = \CF(L_\alpha)
\]
It is a folklore fact that under appropriate conditions, the Lagrangian Floer \emph{co}homology $\HF^\ast(L)$ is isomorphic to the singular cohomology $H^\ast(L)$ as a unital algebra, and hence the Floer homology $\HF_\ast(L)$ is isomorphic to $H^{3-\ast}(L)$ as a unital algebra. This holds true for $\HF(L_\alpha)$, but we need to show it in a roundabout way because our setup does not satisfy the usual hypotheses.

First we explain why $\SI(S^2 \times S^1) \cong \Z \oplus \Z$ (just as an abelian group). The proof is indirect and uses the surgery exact triangle (Theorem \ref{thm:surgery-triangle}); we remark that our proof of the surgery exact triangle does not rely in any way on the current Proposition we are proving. Applying the surgery exact triangle to the standard unknot in $S^3$ gives an exact triangle
\[
	\xymatrix{\SI(S^3) \ar[rr]^{[0]} & & \SI(S^2 \times S^1) \ar[dl]^{[0]} \\ & \SI(S^3) \ar[ul]^{[1]} & }
\]
where $[k]$ indicates that the map shifts the $\Z/2$-grading by $k$. Since $\SI(S^3) \cong \Z$ is supported in a single grading, the lower left map must be the zero map, and we in fact have a short exact sequence
\[
	0 \longrightarrow \Z \longrightarrow \SI(S^2 \times S^1) \longrightarrow \Z \longrightarrow 0.
\]
The only possibility is $\SI(S^2 \times S^1) \cong \Z \oplus \Z$.

Next we turn to the product structure. Buhovsky \cite{buhovsky} shows that for a monotone Lagrangian submanifold $L$ in a closed, monotone symplectic manifold $(M,\omega)$, the Oh spectral sequence $\{E^{p,q}_r, d_r\}$ is multiplicative. This spectral sequence has $E_1$ page given by $E^{p,q}_1 \cong H^{p+q-pN_L}(L)$, abuts to $E_\infty^{p,q}$ which satisfies
\[
	\bigoplus_{q \in \Z} E^{p,q}_\infty \cong \HF^\ast(L) \text{ for any fixed $p$},
\]
and collapses at the $([\tfrac{\dim L + 1}{N_L}] + 1)$-stage. In our present situation, $\dim L_\alpha = 3$ and $N_{L_\alpha} = 2$, so we cannot be sure that the spectral sequence has collapsed until the $E_3$-page. However, the fact that $\HF(L_\alpha) = \SI(S^2 \times S^1) = \Z \oplus \Z$ and $H^\ast(L_\alpha) = \Z \oplus \Z$ implies that $E^{p,q}_1 \cong E^{p,q}_\infty$ and the multiplicative structures agree, so that $\SI(S^2 \times S^1) \cong H^{3-\ast}(S^3)$ as a unital algebra.

For $\omega$ nontrivial, we proceed as follows. The nonzero element $\omega \in H_1(S^2 \times S^1; \F_2) \cong \F_2$ can be represented by $\{\text{pt}\} \times S^1$. Consider the genus $1$ Heegaard diagram $(\Sigma_1, \alpha, \beta, z)$ for $S^2 \times S^1$, where $\alpha = \beta$ is the meridian of the torus. In this Heegaard diagram, $\omega$ can be realized as the standard longitude of the torus. Pushing this into the $\alpha$-handlebody, we see that
\[
	L_\alpha = \{[-I, A, \bfi, \bfj, -\bfk]\}, \quad\quad L_\beta = \{[I, A, \bfi, \bfj, -\bfk]\}.
\]
Then $L_\alpha \cap L_\beta = \varnothing$, so that we necessarily have $\SI(S^2 \times S^1, \omega) = 0$.
\end{proof}

\begin{prop}
\label{prop:Lpq}
Let $p$ and $q$ be relatively prime positive integers and $\omega$ be any $\SO(3)$-bundle on $L(p,q)$. Then $\SI(L(p,q), \omega) \cong \Z^p$.
\end{prop}

\begin{proof}
Consider the genus 1 pointed Heegaard diagram $(\Sigma_1, \alpha, \beta, z)$ for $L(p,q)$, where $\alpha$ is the meridian $m$, and $\beta$ represents the curve $qm + p\ell$ in $\pi_1(\Sigma_1) \cong \Z\langle m \rangle \oplus \Z\langle \ell \rangle$. We see that
\[
	L_\alpha = \{[A, B, \bfi, \bfj, -\bfk] \in \scrR_{1,3} : A = I\}, \quad\quad L_\beta = \{[A, B, \bfi, \bfj, -\bfk] \in \scrR_{1,3} : A^q B^p = I\}.
\]
$L_\alpha \cap L_\beta$ therefore corresponds to $p^\text{th}$ roots of $I$ in $\SU(2)$. If $p$ is odd, this set consists of the trivial representation $[I,I, \bfi, \bfj, -\bfk]$ and $\tfrac{1}{2}(p - 1)$ copies of $S^2$, each of which corresponds to the conjugacy class of $e^{2\pi \bfi k/p}$  for some $k \in \{1, \dots, \tfrac{1}{2}(p-1)\}$, where we identify $\SU(2)$ with the unit quaternions. Similarly, if $p$ is even, $L_\alpha \cap L_\beta$ consists of the trivial representation, the representation $[I, -I, \bfi, \bfj, -\bfk]$, and $\tfrac{1}{2}(p - 2)$ copies of $S^2$, coming from the conjugacy classes of $e^{2\pi \bfi k/p}$ for $k \in \{1, \dots, \tfrac{1}{2}(p-2)\}$.

For any $p$, the intersection is seen to be clean in the sense that $T(L_\alpha \cap L_\beta) = TL_\alpha|_{L_\alpha \cap L_\beta} \cap TL_\beta|_{L_\alpha \cap L_\beta}$, so a result of Po\'zniak\footnote{Technically, Po\'zniak's result only works for connected intersections and $\Z/2$-coefficients, but recent work of Schm\"askche \cite{cleanIntersection} establishes the spectral sequence for disconnected clean intersections and $\Z$-coefficients.} \cite{Pozniak} (see also \cite[Section 2]{seidel-knotted}) posits the existence of a Morse-Bott spectral sequence with $E^1 \cong H_\ast(L_\alpha \cap L_\beta)$ abutting to $\HF(L_\alpha, L_\beta) \cong \SI(L(p,q))$. From the previous paragraph, $H_\ast(L_\alpha \cap L_\beta) \cong \Z^p$ (regardless of whether $p$ is odd or even). Recall from Theorem \ref{thm:eulerchar} that $\chi(\SI(L(p,q))) = |H_1(L(p,q))| = p$. The spectral sequence therefore must collapse at the first page, so that $\SI(L(p,q)) \cong \Z^p$.

For $\omega \neq 0$, the computation proceeds in the same way: intersection points of $L_\alpha^\omega$ and $L_\beta$ correspond to $p^\text{th}$ roots of $\epsilon I$ in $\SU(2)$, where $\epsilon = \pm 1$ is some sign depending on $\omega$. Depending on the parity of $p$ and the choice of sign $\epsilon$, we have one of the following descriptions of $L_\alpha^\omega \cap L_\beta$:
\begin{itemize}
	\item If $p$ is even and $\epsilon = +1$, $L_\alpha^\omega \cap L_\beta$ consists of an isolated point and $\tfrac{1}{2}(p-2)$ copies of $S^2$.
	\item If $p$ is even and $\epsilon = -1$, $L_\alpha^\omega \cap L_\beta$ consists of $\tfrac{1}{2}p$ copies of $S^2$.
	\item If $p$ is odd, then for both $\epsilon = +1$ and $\epsilon = -1$ $L_\alpha^\omega \cap L_\beta$ consists of an isolated point and $\tfrac{1}{2}(p-1)$ copies of $S^2$.
\end{itemize}
In any case, Po\'zniak's Morse-Bott spectral sequence has $E^1$-page $H_\ast(L_\alpha^\omega \cap L_\beta) \cong \Z^p$ and converges to $\HF(L_\alpha^\omega, L_\beta) \cong \SI(L(p,q),\omega)$. Since $\chi(\SI(L(p,q),\omega)) = p$ by Theorem \ref{thm:euler-char}, the spectral sequence must collapse immediately, so that $\SI(L(p,q),\omega) \cong \Z^p$.
\end{proof}

\subsection{Instanton $L$-Spaces: Definition and Examples}

\label{sect:L-space}

Theorem \ref{thm:euler-char} suggests that for a rational homology sphere $Y$, $\SI(Y,\omega)$ is as ``simple'' as possible (\emph{i.e.} minimal rank) if it is free abelian of rank $|H_1(Y; \Z)|$. Note that by Proposition \ref{prop:Lpq}, lens spaces $L(p,q)$ satisfy this property for any $\omega \in H_1(L(p,q); \F_2)$, since $\SI(L(p,q), \omega) \cong \Z^p$. In analogy with Heegaard Floer theory, we make the following definition.

\begin{defn}
An {\bf instanton $L$-space} is a rational homology $3$-sphere $Y$ such that $\SI(Y, \omega)$ is free abelian of rank $|H_1(Y; \Z)|$ for all $\omega \in H_1(Y; \F_2)$.
\end{defn}

The reason we require the property to hold for all $\omega \in H_1(Y; \F_2)$ is because families of instanton $L$-spaces should be generated from certain surgery triads via the exact triangle, and in general there is an unavoidable twisting along the core of the surgery appearing in one of the terms of the exact triangle.

There is a useful ``two out of three'' principle for surgery triads containing instanton $L$-spaces.

\begin{prop}
Suppose $(Y, Y_0, Y_1)$ is a surgery triad of rational homology $3$-spheres such that
\[
	|H_1(Y;\Z)| = |H_1(Y_0;\Z)| + |H_1(Y_1;\Z)|
\]
(this condition may always be achieved by cyclically permuting the elements of the triad, which gives another triad). If $Y_0$ and $Y_1$ are instanton $L$-spaces, then so is $Y$.
\label{Lspacetriad}
\end{prop}

\begin{proof}
Fix some class $\omega \in H_1(Y; \F_2)$ and let $\omega_0, \omega_1$ denote the corresponding classes in $Y_0$ and $Y_1$. We first claim that the map $\SI(Y_0, \omega_0) \longrightarrow \SI(Y_1, \omega_1)$ in the exact triangle for $(Y, Y_0, Y_1)$ must be the zero map. To see this, assume for the sake of contradiction that the map is nonzero. Then exactness and the rank-nullity theorem imply the strict inequality 
\[
	\rk \SI(Y, \omega + \omega_K) < \rk \SI(Y_0, \omega_0) + \rk \SI(Y_1, \omega_1) = \chi(\SI(Y, \omega + \omega_K)),
\]
which is not possible. Therefore in the situation of the Proposition, we in fact have a short exact sequence
\[
	0 \longrightarrow \SI(Y_1, \omega_1) \longrightarrow \SI(Y, \omega + \omega_K) \longrightarrow \SI(Y_0, \omega_0) \longrightarrow 0,
\]
from which it follows that $\SI(Y, \omega + \omega_K)$ is free abelian of rank $|H_1(Y_0; \Z)| + |H_1(Y_1; \Z)| = |H_1(Y; \Z)|$. By varying $\omega \in H_1(Y; \F_2)$, we conclude that $Y$ is an instanton $L$-space.
\end{proof}

Using Proposition \ref{Lspacetriad}, we can generate many families of examples:

\begin{exmp}
(Large Surgeries) Suppose $K$ is a knot in $S^3$ and $n > 0$ is some integer such that $S^3_n(K)$ is an instanton $L$-space. Then for any integer $m > n$, $S^3_{m}(K)$ is also an instanton $L$-space, as can be seen by repeatedly applying the surgery exact triangles for surgery triads of the form $(S^3, S^3_{m-1}(K), S^3_m(K))$.

A concrete example of this is given by taking $K$ to be the torus knot $T_{p,q}$ and $n = pq - 1$. Then $S^3_{pq-1}(T_{p,q}) \cong L(p,q)$, and hence $S^3_s(T_{p,q})$ is an instanton $L$-space for all $m \geq pq - 1$.

Another interesting example is given by the pretzel knot $P(-2,3,7)$. Fintushel and Stern noted that $+18$-surgery on this knot gives the lens space $L(18,5)$. It is also known that $S^3_{19}(P(-2,3,7)) \cong L(19,7)$ and $S^3_n(P(-2,3,7))$ is hyperbolic for all $n > 19$. In any case, we see that $S^3_n(P(-2,3,7))$ is an instanton $L$-space for all $n \geq 18$, and this gives an infinite family of hyperbolic examples.
\end{exmp}

\begin{exmp}
(Plumbing Graphs) By a {\bf weighted graph} we mean a pair $(G,m)$ where $G$ is a graph (possibly with multiple connected components) and $m$ (which stands for ``multiplicity'') is an integer-valued function on the vertices of $G$. We can form a $4$-manifold with boundary, $W(G,m)$, by associating to each vertex $v$ of $G$ the disk bundle over $S^2$ with Euler number $m(v)$ and plumbing together bundles whose vertices are connected by an edge. Let $Y(G,m)$ denote the boundary of $W(G,m)$. For a vertex $v$, let $d(v)$ (the {\bf degree} of $v$) denote the number of edges of $G$ containing $v$.

We claim that if $(G,m)$ satisfies
\begin{itemize}
	\item[(1)] $G$ is a disjoint union of trees;
	\item[(2)] $d(v) \leq m(v)$ with the inequality being strict for at least one vertex $v$;
\end{itemize}
then $Y(G,m)$ is an instanton $L$-space. To see this, we use induction on the number of vertices and subinduction on the multiplicity of the new vertex added in the inductive step. First, if $G = \{v\}$ with $m(v) \geq 1$ then the hypotheses of our claim are satisfied, and $Y(G,m)$ is the lens space $L(m(v),1)$, hence an instanton $L$-space. Now suppose the claim holds for all disjoint unions of trees with at most $n$ vertices, and suppose $(G,m)$ is a disjoint union of trees with $n + 1$ vertices. Choose a leaf $v^\ast$ of $G$, \emph{i.e.} a vertex with $d(v^\ast) = 1$; we now subinduct on $m(v^\ast)$. If $m(v^\ast) = 1$, then $Y(G,m) = Y(G',m')$ where $G' = G \setminus \{v^\ast\}$ and $m'$ agrees with $m$ except on the vertex $v'$ that is connected to $v^\ast$ in $G$; we set $m'(v') = m(v') - 1$. $(G',m')$ satisfies our hypotheses and has $n$ vertices, so by the inductive hypothesis $Y(G,m) = Y(G',m')$ is an instanton $L$-space.

Now, for the subinductive hypothesis, suppose the claim holds for all $(G,m)$ with up to $n+1$ vertices where all leaves of $G$ have multiplicity at most $k$. For the subinductive step, let $(G,m)$ be a weighted graph with $n+1$ vertices, and suppose $v^\ast$ is a leaf of $G$ with $m(v^\ast) = k+1$ and all other leaves have multiplicity at most $k$. We can form two weighted graphs from $(G,m)$: $(G_1, m_1)$, with $G_1 = G \setminus \{v^\ast\}$ and $m_1(v)$ agreeing with $m(v)$ for $v \neq v^\ast$; and $(G_2, m_2)$, with $G_2 = G$ and $m_2(v^\ast) = k$, but with $m_2$ otherwise agreeing with $m$. It is clear that $(Y(G,m), Y(G_1,m_1), Y(G_2,m_2))$ is a surgery triad and that
\[
	|H_1(Y(G,m);\Z)| = |H_1(Y(G_1,m_1); \Z)| + |H_1(Y(G_2,m_2);\Z)|.
\]
Since $Y(G_1,m_1)$ and $Y(G_2,m_2)$ are instanton $L$-spaces by the subinductive hypothesis, Proposition \ref{Lspacetriad} implies that $Y(G,m)$ is as well.

\end{exmp}

\begin{exmp}
\label{ex:QAlinks}
(Branched Double Covers of Quasi- Alternating Links) The set of {\bf quasi-alternating links} $\mathcal{Q}$ is the smallest set of links satisfying the following properties:
\begin{itemize}
	\item[(1)] The unknot is in $\mathcal{Q}$.
	\item[(2)] If $L$ is any link admitting \emph{some} diagram with a crossing such that the associated $0$- and $1$-resolutions $L_0$ and $L_1$ (see Figure \ref{fig:skein}) satisfy
	\begin{itemize}
		\item $L_0$ and $L_1$ are in $\mathcal{Q}$,
		\item $\det(L_0), \det(L_1) \neq 0$,
		\item $\det(L) = \det(L_0) + \det(L_1)$,
	\end{itemize}
\end{itemize}
then $L$ is also in $\mathcal{Q}$. Quasi-alternating links were introduced and studied by Ozsv\'ath and Szab\'o in the context of Heegaard Floer theory \cite{oz-sz-branched}. They showed that all alternating knots are quasi-alternating, but gave an example of a quasi-alternating knot that is not alternating.

To see that branched double covers of quasi-alternating links are instanton $L$-spaces, we proceed by induction on the crossing number of the link. The base case certainly holds, as $\Sigma(U) = S^3$ is an instanton $L$-space. Now suppose all branched double covers of quasi-alternating links with crossing number at most $n$ are instanton $L$-spaces, and suppose $L$ is a quasi-alternating link with crossing number $n+1$. Choose a diagram for $L$ with a distinguished crossing as in item (2) above, so that in particular the $0$- and $1$-resolutions are quasi-alternating and satisfy $\det(L) = \det(L_0) + \det(L_1)$. Since $L_0$ and $L_1$ are quasi-alternating with crossing number at most $n$, the inductive hypothesis implies that $\Sigma(L_0)$ and $\Sigma(L_1)$ are instanton $L$-spaces. Then Proposition \ref{Lspacetriad} implies that $\Sigma(L)$ is also an instanton $L$-space. Therefore the branched double cover of any quasi-alternating link is an instanton $L$-space.
\end{exmp}

\begin{exmp}
(Connected Sums) If $Y$ and $Y'$ are instanton $L$-spaces, then so is their connected sum $Y \# Y'$. This is because $|H_1(Y \# Y';Z)| = |H_1(Y;\Z)| \cdot |H_1(Y';\Z)|$ and by the K\"unneth principle for symplectic instanton homology of connected sums, $\SI(Y \# Y') \cong \SI(Y) \otimes \SI(Y') \cong \Z^{|H_1(Y;\Z)| \cdot |H_1(Y';\Z)|}$.
\end{exmp}

\appendix

\section{Heegaard Diagrams for $3$-Manifolds}

\label{sect:heegaard-diagrams}

In much of this paper, we make use of Heegaard diagrams of $3$-manifolds. By now, Heegaard diagrams and their basic theory are well-known due to the success of Ozsv\'ath-Szab\'o's Heegaard Floer homology \cite{oz-sz}, but for convenience we quickly review them in this Appendix.

\subsection{Heegaard Splittings and Diagrams}

A {\bf genus $g$ handlebody} is a $3$-manifold homeomorphic to regular neighborhood in $\R^3$ of a wedge sum of $g$ circles. One may imagine this as a ``filling'' of a standardly embedded genus $g$ surface $\Sigma_g$ in $\R^3$.

Let $Y$ be a closed, oriented, smooth $3$-manifold. A (genus $g$) {\bf Heegaard splitting} of $Y$ is a decomposition of $Y$ as the union of two (genus $g$) handlebodies. More precisely, it consists of an embedding $i: \Sigma_g \longrightarrow Y$ such that $Y \setminus i(\Sigma_g)$ is a disjoint union of two genus $g$ handlebodies. The embedded surface $i(\Sigma_g) \subset Y$ is called a {\bf Heegaard surface}.

\begin{prop}
Any closed, oriented, smooth $3$-manifold $Y$ admits a Heegaard splitting.
\end{prop}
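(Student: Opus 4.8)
The plan is to use Morse theory, which meshes naturally with the handle-decomposition machinery already set up in the paper. First I would invoke the existence of a Morse function $f\colon Y \longrightarrow \R$, and then appeal to the standard rearrangement results in smooth manifold theory (the handle-cancellation and handle-reordering lemmas) to replace $f$ by a \emph{self-indexing} Morse function: one with $f(p) = \operatorname{ind}(p)$ for every critical point $p$, having a single critical point of index $0$ and a single critical point of index $3$. The passage to a unique minimum and maximum uses connectedness of $Y$: any excess index-$0$ critical point can be cancelled against an index-$1$ critical point, and dually any excess index-$3$ critical point against an index-$2$ one.

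Next I would set $H_\alpha = f^{-1}([0,\tfrac{3}{2}])$ and $H_\beta = f^{-1}([\tfrac{3}{2},3])$, with common boundary $\Sigma = f^{-1}(\tfrac{3}{2})$, a closed surface. The key step is to identify each piece as a handlebody. Since $f$ is self-indexing, $H_\alpha$ is obtained from a single $0$-handle (a ball) by successively attaching $1$-handles, one for each index-$1$ critical point; this is exactly the definition of a handlebody, of genus equal to the number of index-$1$ critical points. Applying the same reasoning to $-f$ (which is again self-indexing with a unique minimum and maximum) shows $H_\beta$ is likewise a handlebody. The surface $\Sigma$ is orientable because it separates the oriented manifold $Y$, and a comparison of Euler characteristics (or of first homology, using that $H_\alpha \cup H_\beta = Y$ and $H_\alpha \cap H_\beta = \Sigma$) forces the two genera to coincide. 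Hence $Y = H_\alpha \cup_\Sigma H_\beta$ is a genus $g$ Heegaard splitting.

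The main obstacle — in fact essentially the only nonformal ingredient — is the rearrangement of $f$ into self-indexing form with a single minimum and maximum; this rests on the handle-slide and handle-cancellation theorems, which I would cite rather than reprove. Everything else (that an iterated $0$-handle-then-$1$-handles cobordism is a handlebody, orientability of the splitting surface, equality of the genera) is routine. As a cross-check, and perhaps as an alternative argument, I would note the classical combinatorial construction: choosing a triangulation of $Y$, one lets $H_\alpha$ be a closed regular neighborhood of the $1$-skeleton and $H_\beta$ a closed regular neighborhood of the dual $1$-skeleton, using that the complement of an open regular neighborhood of the $1$-skeleton of a triangulated closed $3$-manifold deformation retracts onto the dual $1$-skeleton.
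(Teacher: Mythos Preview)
Your argument is correct, but it takes a different route from the paper's own proof. The paper opts for the classical triangulation argument you mention only as a cross-check at the end: it fixes a triangulation $h\colon Y_\Delta \to Y$, takes $H_0$ to be a regular neighborhood of the image of the $1$-skeleton, and observes via the dual triangulation that $H_1 = \overline{Y \setminus H_0}$ is also a handlebody of the same genus. That is the entire proof in the paper---three sentences, with the existence of a triangulation cited as ``well-known.''

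Your Morse-theoretic approach is equally standard and arguably fits the surrounding paper better, since Cerf decompositions and Morse data are the organizing language elsewhere. It also makes the link between Heegaard splittings and the paper's Morse-datum formalism explicit: a self-indexing Morse function with one minimum and one maximum is precisely a Morse datum whose associated Cerf decomposition is a Heegaard splitting. The triangulation proof, on the other hand, is shorter and avoids invoking the rearrangement/cancellation theorems, trading them for the (also nontrivial) existence of smooth triangulations. Both proofs black-box one foundational result and are otherwise routine.
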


\begin{proof}
It is well-known that such a $Y$ admits a triangulation, \emph{i.e.} there exists a simplicial complex $Y_\Delta$ and a homeomorphism $h: Y_\Delta \longrightarrow Y$. Let $H_0$ denote a regular neighborhood in $Y$ of the image of the $1$-skeleton of $Y_\Delta$ under $h$. Then $H_0$ is a genus $g$ handlebody (for \emph{some} $g$). By considering the triangulation dual to $h: Y_\Delta \longrightarrow Y$, it is easy to see that $H_1 = \overline{Y \setminus H_0}$ is also a genus $g$ handlebody (for the \emph{same} $g$ as $H_0$). Therefore $Y$ admits a Heegaard splitting.
\end{proof}

A Heegaard splitting is an ``internal'' way of decomposing a $3$-manifold $Y$ into two handlebodies, \emph{i.e.} it uses an embedded surface \emph{inside} $Y$. For purposes of visualization, it is more convenient to adopt an ``external'' point of view for Heegaard splittings, where we start with an abstract surface $\Sigma_g$ and obtain a closed, oriented, smooth $3$-manifold in which $\Sigma_g$ can be considered a Heegaard surface by describing how to glue two handlebodies along their common boundary $\Sigma_g$.

In order to achieve this external depiction, we need to understand isotopy classes of diffeomorphisms of $\Sigma_g$. By the Alexander trick, an automorphism of $\Sigma_g$ is determined up to isotopy by the isotopy classes of the images of the $g$ ``standard $\alpha$-curves'' depicted in Figure \ref{fig:standard-alpha}. Hence we can describe a $3$-manifold if we have an abstract genus $g$ surface $\Sigma_g$ along with two $g$-tuples of simple closed curves in $\Sigma_g$ (subject to certain conditions to ensure each set is the image of the standard $\alpha$-curves under a single automorphism of $\Sigma_g$).

\begin{figure}[h]
	\centering
	\includegraphics{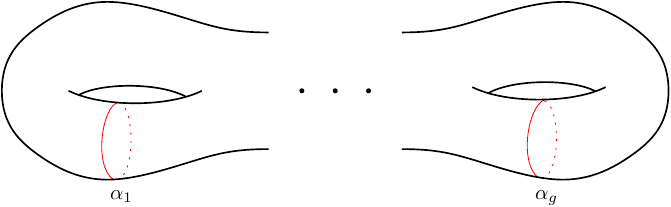}
	\caption{The ``standard $\alpha$-curves'' on $\Sigma_g$.}
	\label{fig:standard-alpha}
\end{figure}

The previous paragraph inspires the following definition. A (genus $g$) {\bf Heegaard diagram} is a collection $\calH = (\Sigma_g, \bfalpha, \bfbeta)$ where $\bfalpha = (\alpha_1, \dots, \alpha_g)$ and $\bfbeta = (\beta_1, \dots, \beta_g)$ are $g$-tuples of simple closed curves in $\Sigma_g$ such that the $\alpha_k$ (repectively the $\beta_k$) are disjoint and homologically independent (\emph{i.e.} their linear span in $H_1(\Sigma_g; \Z) \cong \Z^{2g}$ has rank $g$).

Given a Heegaard diagram $\calH = (\Sigma_g, \bfalpha, \bfbeta)$, we may construct a closed, oriented, smooth $3$-manifold $Y$ equipped with a genus $g$ Heegaard splitting as follows. Form genus $g$ handlebody $H_\alpha$ (the {\bf $\alpha$-handlebody}) by attaching $2$-handles to $\Sigma_g \times [0,1]$ along the curves $\alpha_k \times \{1\} \subset \Sigma_g \times \{1\}$ and capping off the remaining $S^2$ boundary component with a $3$-ball. The {\bf $\beta$-handlebody} $H_\beta$ is formed in the same way using the curves $\beta_k \times \{1\} \subset \Sigma_g \times \{1\}$. We then have a closed $3$-manifold $Y$ obtained by identifying the boundaries of $H_\alpha$ and $H_\beta$ (note that $H_\alpha$ and $H_\beta$ have the \emph{same} abstract surface $\Sigma_g$ as their boundary; the handlebodies may therefore be glued together using the identity map). By construction, $Y$ has the particular Heegaard splitting $H_\alpha \cup_{\Sigma_g} H_\beta$.

The two most basic and important examples of Heegaard diagrams are the genus $1$ diagrams for $S^3$ and $S^2 \times S^1$ pictured in Figure \ref{fig:heegaard-diagrams}.

\begin{figure}[h]
	\centering
	\includegraphics[scale=1.15]{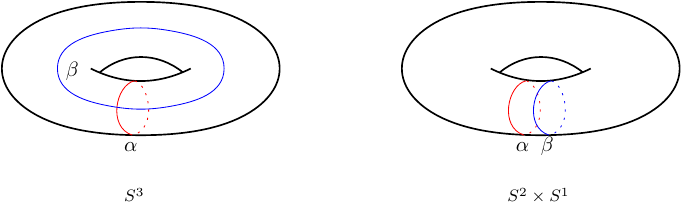}
	\caption{Standard Heegaard diagrams for $S^3$ and $S^2 \times S^1$.}
	\label{fig:heegaard-diagrams}
\end{figure}

If one has two Heegaard diagrams $\calH = (\Sigma_g, \bfalpha, \bfbeta)$ and $\calH' = (\Sigma_{g^\prime}, \bfalpha', \bfbeta')$ representing $3$-manifolds $Y$ and $Y'$, then the ``connected sum'' diagram $\calH \# \calH' = (\Sigma_g \# \Sigma_{g^\prime}, \bfalpha \cup \bfalpha', \bfbeta \cup \bfbeta')$ represents the $3$-manifold $Y \# Y'$.

\subsection{Heegaard Moves}

We would like to use Heegaard diagrams as an equivalent way of thinking of $3$-manifolds (up to diffeomorphism). A moment's thought reveals that this cannot be done without introducing some concept of equivalence of Heegaard diagrams, as any $3$-manifold is described uncountably many different Heegaard diagrams (for example, fix one diagram $(\Sigma_g, \bfalpha, \bfbeta)$ for your $3$-manifold and change the curve $\alpha_1$ by an isotopy). Because of this, we introduce the following three {\bf Heegaard moves}, which change the Heegaard diagram $\calH = (\Sigma_g, \bfalpha, \bfbeta)$ but not the associated $3$-manifold $Y$ (up to diffeomorphism).

\textit{Isotopies.} Replace $\calH$ with $\calH' = (\Sigma_g, \bfalpha', \bfbeta')$, where each $\alpha_k^\prime$ (respectively $\beta_k^\prime$) differs from $\alpha_k$ (respectively $\beta_k$) by an isotopy.

\textit{Handleslides.} Choose any two distinct $\alpha$-curves $\alpha_j$ and $\alpha_k$. Any curve $\alpha_j^\prime$ in $\Sigma_g$ such that together $\alpha_j$, $\alpha_k$, and $\alpha_j^\prime$ bound a pair of pants in $\Sigma_g$ is called a {\bf handleslide of $\alpha_j$ over $\alpha_k$}. The curve $\alpha_j^\prime$ is uniquely determined up to isotopy. If $\calH'$ denotes the Heegaard diagram which identical to $\calH$ except that $\alpha_j$ is replaced with $\alpha_j^\prime$, we say that $\calH'$ is obtained from $\calH$ by a handleslide (of $\alpha_j$ over $\alpha_k$). Handleslides may also be performed amongst the $\beta$-curves. 

\textit{Stabilization.} Replace $\calH$ with $\calH \# \calH_0$, where $\calH_0$ is the standard genus $1$ Heegaard diagram for $S^3$ as in Figure \ref{fig:heegaard-diagrams}. We can also perform the reverse operation, \emph{destabilization}, by getting rid of any $\calH_0$ connect summand of $\calH$; only the $\alpha$- and $\beta$-curve from $\calH_0$ can intersect this summand if we wish to remove it.

The significance of the above three Heegaard moves is that they provide the required equivalence condition on Heegaard diagrams in order to identify them with diffeomorphism classes of $3$-manifolds.

\begin{prop}
Any two Heegaard diagrams $\calH$ and $\calH'$ for the same $3$-manifold $Y$ are related by a finite sequence of isotopies, handleslides, and (de)stabilizations.
\label{prop:heegaardmoves}
\end{prop}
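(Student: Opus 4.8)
The plan is to deduce this from Cerf theory, via the standard dictionary between Heegaard splittings and Morse functions. First I would recall that a genus $g$ Heegaard splitting of $Y$ is the same data as a self-indexing Morse function $f\colon Y \longrightarrow [0,3]$ with a gradient-like vector field, having a single critical point of index $0$, a single critical point of index $3$, and exactly $g$ critical points of index $1$ and $g$ of index $2$: the Heegaard surface is $f^{-1}(3/2)$, the curves $\bfalpha$ are the intersections of $f^{-1}(3/2)$ with the ascending manifolds of the index-$1$ critical points, and the curves $\bfbeta$ are its intersections with the descending manifolds of the index-$2$ critical points (connectedness of $Y$ forces $f^{-1}(t)$ to be connected for all $t$). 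A pointed diagram additionally records a gradient flow line joining the index-$0$ and index-$3$ critical points. Under this correspondence I would check that the three Heegaard moves are exactly the elementary modifications one is permitted to make: an isotopy of attaching curves corresponds to an isotopy of the gradient-like field (a path of metrics) which crosses no handleslide wall; a handleslide of $\alpha_j$ over $\alpha_k$ corresponds to interchanging two adjacent index-$1$ critical values and passing the wall where the ascending manifold of one meets the descending manifold of the other, with the analogous statement at index $2$; and a stabilization is the birth of a cancelling index-$1$/index-$2$ pair.

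Next I would invoke Cerf's theorem. Given two Heegaard-type Morse functions $f_0, f_1$, choose a generic path $\{f_t\}_{t\in[0,1]}$ in $C^\infty(Y,\R)$ from $f_0$ to $f_1$. Genericity ensures $f_t$ is Morse except at finitely many parameters, and that the exceptional parameters are of two kinds: crossings of two distinct critical values, and birth-death (parabolic) moments at which a pair of critical points of consecutive index is created or annihilated. Along such a path $f_t$ need not stay of Heegaard type — it may temporarily acquire extra minima or maxima, or index-$1$ and index-$2$ critical points may fail to be ordered — so I would normalize using the Morse rearrangement and cancellation lemmas: any index-$0$ critical point other than the global minimum can be cancelled against an adjacent index-$1$ point (using connectedness of the level sets), symmetrically at index $3$, and all index-$1$ critical values can be pushed below all index-$2$ critical values. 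After this normalization the exceptional parameters become, respectively: a crossing of two index-$1$ (or two index-$2$) values, realized on $f^{-1}(3/2)$ as a handleslide; a crossing of an index-$1$ value past an index-$2$ value, which after re-ordering is absorbed into handleslides and an isotopy; and a birth/death of an index-$1$/index-$2$ pair, which is a (de)stabilization. Concatenating over the finitely many events exhibits the passage from $\calH$ to $\calH'$ as a finite sequence of isotopies, handleslides, and (de)stabilizations, and I would note that the pointed structure is carried along for free: the space of gradient flow lines from the index-$0$ to the index-$3$ critical point is connected (so moving the basepoint is an isotopy), and a stabilization can be performed in a ball near the basepoint.

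The main obstacle I anticipate is precisely this bookkeeping in the second step: keeping $f_t$ and an accompanying gradient-like field inside the locus of Heegaard-type data as the exceptional parameters are crossed, and in particular showing that every birth-death event involving index $0$ or index $3$ can be traded — via handle rearrangement and slides — for one at indices $1$ and $2$. This is exactly where connectedness of $Y$ and the Morse cancellation lemma do the work, and handling it carefully for a one-parameter family (rather than a single function) is the technical heart of the argument. In effect this reduces Proposition \ref{prop:heegaardmoves} to the classical Reidemeister–Singer theorem, and the outline above is the Cerf-theoretic proof of that theorem.
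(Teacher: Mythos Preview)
The paper does not supply a proof of this proposition at all: it is stated in Appendix~\ref{sect:heegaard-diagrams} as a known fact (it is the classical Reidemeister--Singer theorem) and left without argument. Your Cerf-theoretic outline is the standard modern proof of that theorem and is correct in its broad strokes, so there is nothing in the paper to compare against; you are simply filling in what the author chose to cite rather than prove.
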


\subsection{Pointed Heegaard Diagrams}

We actually need to include a basepoint when using Heegaard diagrams in symplectic instanton homology, so we explain the minor modifications to the definitions needed to accomodate this.

A {\bf pointed Heegaard diagram} $\calH = (\Sigma_g, \bfalpha, \bfbeta, z)$ is a Heegaard diagram with the additional data of a basepoint $z \in \Sigma_g$ that is disjoint from both the $\alpha$- and $\beta$-curves. A pointed Heegaard diagram determined a closed, oriented, smooth $3$-manifold $Y$ in the same way that an unpointed Heegaard diagram does, and this $3$-manifold $Y$ comes with a distinguished basepoint $z$ on its equipped Heegaard surface.

Given a pointed Heegaard diagram $\calH = (\Sigma_g, \bfalpha, \bfbeta, z)$, the three {\bf pointed Heegaard moves} are nearly the same as the unpointed case, with the following small changes:
\begin{itemize}
	\item Isotopies should not pass through the basepoint.
	\item The pair of pants region defining a handleslide should not contain the basepoint.
	\item The connected sum region for stabilization should not contain the basepoint.
\end{itemize}

We then have a pointed analogue of Proposition \ref{prop:heegaardmoves}:

\begin{prop}
Any two pointed Heegaard diagrams $\calH$ and $\calH'$ for the same pointed $3$-manifold $(Y,z)$ are related by a finite sequence of isotopies, handleslides, stabilizations, and destabilizations.
\end{prop}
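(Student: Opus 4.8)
The plan is to bootstrap from the unpointed Reidemeister--Singer theorem, Proposition \ref{prop:heegaardmoves}, and upgrade the connecting chain of unpointed moves to a chain of pointed ones. Forgetting the basepoints of $\calH$ and $\calH'$, Proposition \ref{prop:heegaardmoves} gives a finite chain $\calH = \calH_0, \calH_1, \dots, \calH_N = \calH'$ of Heegaard diagrams (basepoints ignored) in which consecutive terms differ by an isotopy, a handleslide, or a (de)stabilization. I would first record the elementary fact that moving the basepoint within a single connected component of the curve complement does not change the pointed diagram up to pointed moves (it is realized by an ambient isotopy of the surface supported in that region). Hence it is enough to show that each unpointed move from $\calH_k$ to $\calH_{k+1}$ can be performed in the complement of a small disk $D_k \subset \Sigma$ chosen around the current basepoint and disjoint from every attaching curve of $\calH_k$; pushing the basepoint into the corresponding disk $D_{k+1}$ afterward lets one continue inductively, and at the end one lands in the region of $\calH'$ containing $z'$.

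I would then treat the three move types. For an isotopy of an attaching curve, arrange it to be transverse to $\partial D_k$ throughout and note that, since neither the initial nor the final curve enters $D_k$, any strand of the moving curve that temporarily reaches into $D_k$ can be pushed back out across $\partial D_k$; this produces an isotopy supported in $\Sigma \setminus D_k$, a legal pointed isotopy. For a handleslide of $\alpha_j$ over $\alpha_k$, first apply pointed isotopies pulling $\alpha_j$ and $\alpha_k$ out of a neighborhood of $D_k$; then the witnessing pair of pants (unique up to isotopy) can be chosen inside $\Sigma \setminus D_k$, so the slide is a pointed handleslide. A (de)stabilization whose connect-sum region meets $D_k$ can be relocated, again via pointed isotopies, to a small region disjoint from $D_k$. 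Carrying this out along the whole chain converts it into a sequence of pointed Heegaard moves from $\calH$ to $\calH'$; alternatively one may simply cite the pointed Reidemeister--Singer theorem of Ozsv\'ath--Szab\'o, the point being that it needs nothing beyond Proposition \ref{prop:heegaardmoves} and elementary surface topology.

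The main obstacle is the handleslide step: one must be sure that the pair of pants realizing a prescribed handleslide can be isotoped off the basepoint disk without disturbing the other curves, and that the auxiliary isotopies used to evacuate a neighborhood of the basepoint beforehand genuinely exist and can be taken to avoid the basepoint. This is the only place where more than bookkeeping is involved, and it reduces to a routine but careful application of surface topology, together with the fact recorded above that relocating the basepoint within a complementary region is harmless.
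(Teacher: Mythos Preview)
The paper does not actually prove this proposition; it is stated without proof in the appendix as standard background (the original argument is due to Ozsv\'ath--Szab\'o). Your sketch is along the right lines but glosses over the genuine difficulty, and mislocates it.

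The problem is already in the isotopy step. Your claim is that if neither $\alpha_0 = \alpha$ nor $\alpha_1 = \alpha'$ meets $D_k$, then any isotopy $\alpha_t$ between them can be pushed out of $D_k$. This fails when the track $F: S^1 \times [0,1] \to \Sigma$ of the isotopy has nonzero algebraic intersection with $z$. Concretely, if a finger of $\alpha_t$ enters $D_k$ on one side, sweeps across $z$, and exits on the opposite side, then the relevant component of $F^{-1}(D_k)$ is a disk whose boundary maps to $\partial D_k$ with degree $\pm 1$, and no homotopy rel boundary can make it miss $z$. Your ``push the strand back out'' is ambiguous here --- there are two sides to push toward --- and no consistent choice produces a continuous family landing at $\alpha'$. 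The same issue recurs in your handleslide step: choosing a pair of pants that avoids $z$ when the original one contained $z$ yields a slid curve differing from the intended one by exactly such a basepoint-crossing isotopy.

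The missing idea --- and the actual content of the pointed Reidemeister--Singer theorem --- is that an isotopy of $\alpha_j$ across $z$ can be \emph{traded for handleslides}. Since the complement $\Sigma_g \setminus (\alpha_1 \cup \cdots \cup \alpha_g)$ is a connected planar surface, one can carry $\alpha_j$ ``the long way around'' $z$ through this complement; each time the path crosses another $\alpha_i$, one performs a pointed handleslide of $\alpha_j$ over $\alpha_i$. Thus the move types interact: an unpointed isotopy becomes a composition of pointed handleslides and pointed isotopies. You correctly sense that handleslides are where the work lies, but the mechanism is the reverse of what you describe --- it is the isotopy move that sometimes must be realized via handleslides, not the handleslide that needs to be rescued by a delicate isotopy argument.
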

\section{Quilted Floer Homology}

\label{sect:quilts}

The construction of symplectic instanton homology and the proof of its main properties relies on the quilted Floer theory developed by Wehrheim and Woodward \cite{quilts}. We quickly review the relevant definitions and results in this Appendix.

\subsection{Lagrangian Correspondences and the Symplectic Category}

Recall that if $(M_0, \omega_0)$ and $(M_1, \omega_1)$ are symplectic manifolds, then their product $M_0 \times M_1$ is a symplectic manifold when equipped with the symplectic form $(-\omega_0) \boxplus \omega_1$.\footnote{Recall that $\boxplus$ denotes the external Whitney sum: $(-\omega_0) \boxplus \omega_1 = \pi_0^\ast(-\omega_0) + \pi_1^\ast \omega_1$, where $\pi_i: M_0 \times M_1 \longrightarrow M_i$ ($i = 0, 1$) is the obvious projection.} A {\bf Lagrangian correspondence} from $(M_0, \omega_0)$ to $(M_1, \omega_1)$ is a Lagrangian submanifold $L$ of $(M_0 \times M_1, (-\omega_0) \boxplus \omega_1)$. In what follows, we will typically drop the symplectic form from the notation (in general it will be understood by context), and given $M = (M, \omega)$ we will use the notation $M^- = (M, -\omega)$. Therefore a Lagrangian correspondence from $M_0$ to $M_1$ is a Lagrangian submanifold of $M_0^- \times M_1$.

We use the notation $L: M_0 \longrightarrow M_1$ as shorthand for ``$L$ is a Lagrangian correspondence from $(M_0, \omega_0)$ to $(M_1, \omega_1)$.'' This notation is inspired for our desire to have a ``symplectic category'' whose objects are symplectic manifolds and morphisms are Lagrangian correspondences. A naive construction of such a category is not possible, since compositions aren't always defined.

More precisely, given two Lagrangian correspondences $L_{01}: M_0 \longrightarrow M_1$ and $L_{12}: M_1 \longrightarrow M_2$, their {\bf (geometric) composition} $L_{01} \circ L_{12}$ (note the ordering) is the subset
\[
	L_{01} \circ L_{12} = \{(x,y) \in M_0 \times M_2 \mid \exists p \in M_1 \text{ s.t. } (x,p) \in L_{01}, (p,y) \in L_{12}\}.
\]
Certainly $L_{01} \circ L_{12}$ might not even be an immersed submanifold of $M_0^- \times M_2$, and hence the composition of two Lagrangian correspondences is not in general a Lagrangian correspondence. 

There is a useful case where the composition \emph{is} a Lagrangian correspondence: If $L_{01} \times L_{12}$ intersects $M_0^- \times \Delta_{M_1} \times M_2$ transversely (where $\Delta_{M_1}$ is the diagonal in $M_1^- \times M_1$) and the projection $\pi_{02}: L_{01} \times_{M_1} L_{12} \longrightarrow M_0^- \times M_2$ is an embedding, we say that the composition $L_{01} \circ L_{12}$ is (geometrically) {\bf embedded}. Clearly an embedded composition $L_{01} \circ L_{12}$ is a Lagrangian correspondence from $(M_0, \omega_0)$ to $(M_2, \omega_2)$.

Nevertheless, we will still naturally encounter Lagrangian correspondences whose composition is \emph{not} embedded, and therefore we simply formally introduce compositions in order to form our symplectic category. A {\bf generalized Lagrangian correspondence} from $(M, \omega)$ to $(M', \omega')$ is a finite sequence of Lagrangian correspondences
\[
	\underline{L} = (L_{01}, L_{12}, \dots, L_{(k-1)k})
\]
of the form $L_{(j-1)j}: (M_{j-1}, \omega_{j-1}) \longrightarrow (M_j, \omega_j)$, where $\{(M_j, \omega_j)\}_{j = 0}^k$ is a sequence of symplectic manifolds such that $(M_0, \omega_0) = (M, \omega)$ and $(M_k, \omega_k) = (M', \omega')$. In other words, a generalized Lagrangian correspondence is just some finite tuple of composable Lagrangian correspondences such that the first correspondence starts at $(M, \omega)$ and the last correspondence ends at $(M', \omega')$. The generalized Lagrangian correspondence is {\bf cyclic} if $(M, \omega) = (M', \omega')$.

We are now prepared to define our symplectic category. The {\bf symplectic category} $\Symp$ with objects smooth symplectic manifolds and morphism sets $\Hom_{\Symp}(M, M')$ consisting of generalized Lagrangian correspondences from $M$ to $M'$, modulo the equivalence relation
\[
	(\dots, L_{(j-1)j}, L_{j(j+1)}, \dots) \sim (\dots, L_{(j-1)j} \circ L_{j(j+1)}, \dots)
\]
whenever the composition $L_{(j-1)j} \circ L_{j(j+1)}$ is embedded. The composition of two morphisms $[\underline{L}]$ and $[\underline{L}']$ is achieved by simply concatenating the sequences of Lagrangian correspondences defining each. The identity morphism in $\Hom_{\Symp}(M, M)$ is simply the diagonal correspondence $[\Delta_M]$.

$\Symp$ also admits duals. The {\bf dual} of a morphism
\[
	\underline{L} = (L_{01}, \dots, L_{(k-1)k}) \in \Hom_{\Symp}(M_0, M_k)
\]
is
\[
	\underline{L}^T = (L_{(k-1)k}^T, \dots, L_{01}^T) \in \Hom_{\Symp}(M_k, M_0),
\]
where
\[
	L_{(j-1)j}^T = \{(m_j, m_{j-1}) \in M_j^- \times M_{j-1} \mid (m_{j-1}, m_j) \in L_{(j-1)j}\}.
\]

\subsection{Floer Homology of Lagrangian Correspondences}

We now turn to the task of extending the definition of Lagrangian Floer homology to accept generalized Lagrangian correspondences, rather than a pair of Lagrangians, as its input. In this section, we will assume all symplectic manifolds are monotone with fixed monotonicity constant $\tau \geq 2$ (meaning $\omega = \tau c_1$) and all Lagrangian correspondences are simply connected. These are the conditions satisfied in our main application, and they are sufficient to guarantee that Floer homology is well-defined with $\Z$ coefficients.

Recall that if $L_0$ and $L_1$ are two transversely intersecting Lagrangian submanifolds of a symplectic manifold $(M,\omega)$, then the {\bf Floer chain group} is the free abelian group generated by the intersection points of $L_0$ and $L_1$:
\[
	\CF_\ast(L_0, L_1) = \bigoplus_{p \in L_0 \cap L_1} \Z\langle p \rangle.
\]
If we fix a suitable, generic almost complex structure $J$ on $M$, for any two intersection points $p, q \in L_0 \cap L_1$, we may define the {\bf moduli space of pseudoholomorphic strips} from $p$ to $q$,
\[
	\calM(p,q; J) = \left\{ u: \R \times [0,1] \longrightarrow M ~\left|~ \begin{array}{l} \dbar_J u = 0, \\ u(t, 0) \in L_0, u(t, 1) \in L_1 \text{ for all } t \in \R \\ \displaystyle\lim_{t \to -\infty} u(t,s) = p, \lim_{t \to +\infty} u(t,s) = q \text{ for all } s \in [0,1] \end{array} \right\}. \right.
\]
Here $\dbar_J$ is the nonlinear Cauchy-Rieman operator,
\[
	\dbar_J u = \frac{1}{2}(du - J \circ du \circ i),
\]
where $i$ is the standard complex structure on $\R \times [0,1]$, which we consider as the infinite strip $\{a+ bi \mid b \in [0,1]\} \subset \C$. For suitable generic $J$, $\calM(p,q; J)$ is a union of smooth, oriented, finite-dimensional, compact manifolds (of possibly different dimensions). There is a natural free $\R$-action on $\calM(p,q; J)$ coming from translation in the $\R$-direction of the domain. We write
\[
	\overline{\calM}(p,q; J) = \calM(p,q;J)_1/\R,
\]
where the subscript ``$1$'' denotes the $1$-dimensional component. Hence $\overline{\calM}(p,q; J)$ consists of a finite number of signed points.

With the above in place, we can define the {\bf Floer boundary operator}
\[
	\partial p = \sum_{q \in L_0 \cap L_1} \# \overline{\calM}(p,q; J)q.
\]
One may show that $\partial^2 = 0$, and hence the {\bf Floer homology}
\[
	\HF_\ast(L_0, L_1) = H_\ast(\CF_\ast(L_0, L_1), \partial)
\]
is defined. A standard type of argument applies to show that $\HF_\ast(L_0, L_1)$ is independent, up to isomorphism, of the suitable, generic $J$ used to define it. By a similar standard argument, it is also invariant, up to isomorphism, under changing either of the Lagrangians by a Hamiltonian isotopy. If $L_0$ and $L_1$ don't intersect transversely to begin with, we therefore can still define $\HF_\ast(L_0, L_1)$ by applying a small Hamiltonian isotopy of one of the Lagrangians to achieve transversality.

Floer homology of cyclic generalized Lagrangian correspondences can be directly defined in terms of the classical Floer homology described above. Suppose we have a cyclic generalized Lagrangian correspondence
\[
	\underline{L} = M_0 \xrightarrow{~L_{01}~} M_1 \xrightarrow{~L_{12}~} \cdots \xrightarrow{~L_{(k-1)k}~} M_k,
\]
$M_k = M_0$. There are two cases, depending on whether $k$ is even or odd. First assume $k$ is even. Define a symplectic manifold
\[
	\mathbf{M} = M_0^- \times M_1 \times M_2^- \times \cdots \times M_k
\]
and two Lagrangian submanifolds
\[
	\mathbf{L}_0 = L_{01} \times L_{23} \times \cdots \times L_{(k-2)(k-1)},
\]
\[
	\mathbf{L}_1 = (L_{12} \times L_{34} \times \cdots \times L_{(k-1)k})^T.
\]
Then define
\[
	\HF_\ast(\underline{L}) = \HF_\ast(\mathbf{L}_0, \mathbf{L}_1).
\]
In the case that $k$ is odd, replace $\underline{L}$ with the generalized Lagrangian correspondence
\[
	M_0 \xrightarrow{~L_{01}~} M_1 \xrightarrow{~L_{12}~} \cdots \xrightarrow{~L_{(k-1)k}~} M_k \xrightarrow{~\Delta_{M_k}~} M_k
\]
and proceed as in the even case.

The most important feature of quilted Floer homology for our purposes is that is it preserved under embedded geometric composition:

\begin{prop}
Suppose $(L_{01}, L_{12}, \dots, L_{(k-1)k})$ is a cyclic generalized Lagrangian correspondence, and that for some $j$, the composition $L_{(j-1)j} \circ L_{j(j+1)}$ is embedded. Then
\[
	\HF(L_{01}, \dots, L_{(j-1)j}, L_{j(j+1)}, \dots, L_{(k-1)k}) \cong \HF(L_{01}, \dots, L_{(j-1)j} \circ L_{j(j+1)}, \dots L_{(k-1)k}).
\]
\end{prop}

\section{Symplectic Picard-Lefschetz Theory}

\label{sect:dehntwists}

The symplectic geometry of generalized Dehn twists factors crucially into our proofs of the surgery exact triangle and the link surgeries spectral sequence. In this Appendix, we outline the basic theory of symplectic Lefschetz fibrations, their monodromy, and their significance in Lagrangian Floer homology. As this material is now quite standard (cf. \cite[Chapter 6]{McDuff-Salamon}, \cite{seidel-fukaya}, \cite{seidel-triangle}, \cite{fiberedtriangle}), we omit proofs.

\subsection{Symplectic Fibrations}

Let $\pi: E \longrightarrow B$ be a smooth fiber bundle. Given $b \in B$, let $F_b = \pi^{-1}(b) \subset E$ denote the fiber over $b$, and write $\iota_b: F_b \longrightarrow E$ for the inclusion map. $\pi: E \longrightarrow B$ is called a {\bf symplectic fibration} if there is a $2$-form $\omega \in \Omega^2(E)$ such that for each $b \in B$, $\omega_b = \iota_b^\ast \omega$ is a symplectic form on the fiber $F_b$.

Note that each fiber of a symplectic fibration is then a symplectic manifold, but it is not necessary for the total space itself to be symplectic. However, when the base $B$ admits a symplectic form $\beta$, W. Thurston showed that there exists $K > 0$ such that $\omega_K = \omega + K\pi^\ast \beta$ is symplectic. Furthermore is it clear that $\iota_b^\ast \omega_K = \iota_b^\ast \omega$, so replacing $\omega$ by $\omega_K$ does not change the symplectic structure on the fibers. In our main application, $B$ will be the $2$-disk with the standard (exact) symplectic structure, in which case we see that monotonicity of the fibers $(F_b, \omega_b)$ implies monotonicity of $(E, \omega_K)$.

A symplectic fibration $\pi: E \longrightarrow B$ has a natural choice of connection (which we think of here as an invariant horizontal distribution) -- namely, the horizontal subspace of $T_b E$ is the annihilator of $\omega \in \Omega^2(E)$ restricted to $T_b E$. Given a smooth path $\gamma: [0,1] \longrightarrow B$, the parallel transport map
\[
	\Hol_\gamma: F_{\gamma(0)} \longrightarrow F_{\gamma(1)}
\]
associated to this connection is a symplectomorphism, \emph{i.e.} $\Hol_\gamma^\ast \omega_{\gamma(1)} = \omega_{\gamma(0)}$. One important consequence of this is that the canonical parallel transport in symplectic fibrations carries Lagrangians to Lagrangians.

\subsection{Symplectic Lefschetz Fibrations and Dehn Twists}

In practice, we want to consider symplectic fibrations with finitely many fibers that are ``singular'' in some well-controlled way. We first recall the holomorphic analogue of such fibrations, the well-known Lefschetz fibrations. If $B$ is a Riemann surface (possibly with boundary), then $(E, \pi)$ is a {\bf Lefschetz fibration} over $B$ if $E$ is a complex manifold and $\pi: E \longrightarrow B$ is a proper holomorphic map whose critical points are \emph{complex} Morse, \emph{i.e.}\ near any critical point of $\pi$ in $E$, there are holomorphic coordinates $(z_1, \dots, z_n)$ such that
\[
	\pi(z_1, \dots, z_n) = \sum_{k = 1}^m z_k^2
\]
for some $0 \leq m \leq n$.

A symplectic Lefschetz fibration is simply a rewriting of the holomorphic definition to make sense in the symplectic category. So, again, $B$ is a Riemann surface (possibly with boundary), and to start we just consider a smooth, proper map $\pi: E \longrightarrow B$. This $\pi$ has finitely many critical points, all of which lie on distinct fibers. Write $E^{\crit}$ for the critical points and $B^{\crit}$ for the critical values. We then say $(E, \pi)$ is a {\bf symplectic Lefschetz fibration} if the following conditions are satisfied:
\begin{itemize}
	\item[(a)] $\pi: E \setminus E^{\crit} \longrightarrow B$ is a symplectic fiber bundle.
	\item[(b)] There are almost complex structures $J_0$ on a neighborhood of $E^{\crit}$ and $j_0$ on a neighborhood of $B^{\crit}$ such that $\pi$ is $(J_0, j_0)$-holomorphic near $E^{\crit}$.
	\item[(c)] The Hessian $D^2 \pi$ defines a nondegenerate complex quadratic form on $T_e E$ for any $e \in E^{\crit}$.
\end{itemize}

Similarly to the holomorphic case, symplectic Lefschetz fibrations can be understood through their vanishing cycles and monodromy. The vanishing cycle of $e \in E^{\crit}$ is defined as follows: take any path $\gamma: [0,1] \longrightarrow B$ with $\gamma(1) = \pi(e) \in B^{\crit}$ and $\gamma([0,1)) \cap B^{\crit} = \varnothing$ (such paths are called {\bf vanishing paths}). The parallel transport $\Hol_\gamma$ (with respect to the canonical symplectic connection) is \emph{a priori} not well-defined since only $\pi: E \setminus E^{\crit} \longrightarrow B \setminus B^{\crit}$ is a symplectic fibration. Nevertheless, by considering the paths $\gamma_{\tau}: [0, \tau] \longrightarrow B$ ($0 \leq \tau < 1$), one may show that
\[
	\Hol_\gamma(x) = \lim_{\tau \to 1} \Hol_{\gamma_\tau}(x)
\]
defines a continuous map $\Hol_\gamma: F_{\gamma(0)} \longrightarrow F_{\gamma(1)}$. Furthermore, $V_\gamma = \Hol_\gamma^{-1}(e)$ is a Lagrangian sphere in $F_{\gamma(0)}$, called the {\bf vanishing cycle} of the path $\gamma$.

The significance of the vanishing cycle $V_\gamma$ is its connection with the monodromy around the singular fiber containing $e \in E^{\crit}$. If $\mu_e$ is a choice of meridian of $\pi(e)$ in $\pi_1(B \setminus \{\pi(e)\})$ (which we choose to miss all points of $B^{\crit}$), then the {\bf monodromy} of the singular fiber $F_{\pi(e)}$ is the symplectomorphism $\Hol_{\mu_e}: F_{\mu_e(0)} \longrightarrow F_{\mu_e(0)}$. It is well-defined up to Hamiltonian isotopy. To explain the connection between the vanishing cycle $V_\gamma$ (where we choose $\gamma$ such that $\gamma(0) = \mu(0)$) and the monodromy $\Hol_{\mu_e}$, we must recall Arnol'd's symplectic generalization of Dehn twists to higher dimensions.

We first define the model Dehn twist of $T^\ast S^n$ along the zero section, $\tau_{S^n}: T^\ast S^n \longrightarrow T^\ast S^n$. The idea is that $\tau_{S^n}$ is a compactly supported symplectomorphism that restricts to the antipodal map on the zero section. To make this precise, fix a smooth function $\zeta_\varepsilon: \R \longrightarrow \R$ such that
\[
	\zeta_\varepsilon(t) = 0 \text{ for } t > \varepsilon, \quad\quad \zeta_\varepsilon(-t) = \zeta_\varepsilon(t) - t.
\]
By thinking of $T^\ast S^n$ as $\{(u, v) \in \R^n \times \R^n : \|u\| = 1, (u, v) = 0\}$, we can define a Hamiltonian
\[
	H_\varepsilon: T^\ast S^n \longrightarrow \R,
\]
\[
	H_\varepsilon(u,v) = \zeta_\varepsilon(\|v\|).
\]
The time $2\pi$ Hamiltonian flow of $H_\varepsilon$ is certainly smooth on the complement of the zero section, and the conditions on $\zeta$ can be used to show that it smoothly extends to the zero section as the antipodal map $(u, 0) \mapsto (-u,0)$ and is compactly supported on $T^\ast S^n$. This Hamiltonian flow $\tau_{S^n}: T^\ast S^n \longrightarrow T^\ast S^n$ is called the {\bf model Dehn twist}. A different choice of $\varepsilon > 0$ changes $\tau_{S^n}$ by a Hamiltonian isotopy.

More generally, given any Lagrangian sphere $V$ in a symplectic manifold $(M,\omega)$, we may define the symplectic Dehn twist $\tau_V: M \longrightarrow M$ as follows. By the Weinstein tubular neighborhood theorem, there is an open neighborhood $U$ of $V$ in $M$ that is symplectomorphic to a neighborhood $U'$ of the zero section in $T^\ast S^n$. Write $\phi: U \longrightarrow U'$ for this symplectomorphism, and define the model Dehn twist $\tau_{S^n}$ using the Hamiltonian $H_\varepsilon$ with $\varepsilon$ small enough that the support of $H_\varepsilon$ is contained within $U'$. Then the {\bf symplectic Dehn twist} along $V$ is
\[
	\tau_V(x) = \begin{cases} (\phi^{-1} \circ \tau_{S^n} \circ \phi)(x), & \text{if } x \in U, \\ x, & \text{otherwise}. \end{cases}
\]
Since we chose $\varepsilon$ small, $\tau_V$ is a well-defined symplectomorphism of $(M,\omega)$ whose support is contained within the Weinstein neighborhood $U$ of $V$. Different choices of $U$ given Hamiltonian isotopic $\tau_V$.

For any $e \in E^{\crit}$, choose a meridian $\mu_e$ of $\pi(e)$ in $\pi_1(B \setminus B^{\crit})$ and let $\gamma$ be a vanishing path for $e$ with $\gamma(0) = \mu_e(0)$. Then one may show that up to Hamiltonian isotopy, the monodromy $\Hol_{\mu_e}$ is equal to the symplectic Dehn twist $\tau_{V_\gamma}$ along the vanishing cycle $V_\gamma$.

The main reason symplectic Dehn twists are important in this work is that the action of a classical Dehn twist on the traceless character variety $\scrR_{1,3}$ (via the action of the mapping class group on $\scrR_{1,3}$) can be identified with a symplectic Dehn twist -- a fact which is crucial for proving the surgery exact triangle.

\begin{thm}
\textup{(Corollary of \cite[Theorem 3.8(b)]{fiberedtriangle})} If $\gamma$ is a nonseparating simple closed curve in $\Sigma_{1,3}$, then the Dehn twist $\tau_\gamma$ induces a symplectic Dehn twist $\tau_V$ in $\scrR_{1,3}$ along the Lagrangian sphere
\[
	V = \{[\rho] \in \scrR_{1,3} \mid \rho(\gamma) = -I\}.
\]
\end{thm}

\begin{rem}
Dehn twists on non separating curves in $\Sigma_{g,3}$, $g > 1$, induce \emph{fibered} Dehn twists on $\scrR_{g,3}$. In order to avoid defining the more technical fibered Dehn twists, we have arranged our proof of the surgery exact triangle to only use symplectic Dehn twists in $\scrR_{1,3}$.
\end{rem}

\subsection{Relative Invariants of Lefschetz Fibrations with Strip-Like Ends}

The final topic to cover in this Appendix is how Lefschetz fibrations can be used to define homomorphisms between Floer homologies. As was the case in Appendix \ref{sect:quilts}, this requires the use of surfaces with strip-like ends. Hence we will replace the base Riemann surface $B$ with a surface with strip-like ends, and impose some conditions on the Lefschetz fibration near the ends.

Let $S$ be a surface with strip-like ends (see Appendix \ref{sect:quilts} for the definition and notation). Then $\pi: E \longrightarrow S$ is a {\bf Lefschetz fibration with strip-like ends} if it is a Lefschetz fibration (whose regular fiber is symplectomorphic to some fixed $(M, \omega_M)$) and for each end $e \in \calE(S)$, we have a trivialization $\varphi_{S,e}: \epsilon_{S,e}^\ast E \longrightarrow \R^\pm \times [0,1] \times M$ such that $\varphi_{S,e}^\ast \omega_E = \pi^\ast_M \omega_M$, where $\pi_M: \R^\pm \times [0,1] \times M$ is the natural projection.

A {\bf Lagrangian boundary condition} for a Lefschetz fibration with strip-like ends $\pi: E \longrightarrow S$ is a sub-bundle $Q \subset E|_{\partial S}$ such that
\begin{itemize}
	\item[(a)] For each $z \in \partial S$, $Q_z$ is a Lagrangian submanifold of $E_z$.
	\item[(b)] For each end $e \in \calE(S)$, there are Lagrangian submanifolds $L_{0,e}, L_{1,e} \subset M$ such that with respect to the trivialization $\varphi_{S,e}$ near the end $e$, $\varphi_{S,e}^{-1}(s, j, L_{j,e})= Q_{\epsilon_{S,e}(s,j)}$, $s \gg 0$.
	\item[(c)] For each end $e \in \calE(S)$, the Lagrangians $L_{0,e}, L_{1,e}$ intersect transversely.
\end{itemize}

Let $(M, \omega)$ be a closed, monotone symplectic manifold and $\pi: E \longrightarrow S$ be a Lefschetz fibration with strip-like ends and regular fiber $(M, \omega)$, and suppose that $\mathbf{L} = \{L_e\}_{e \in \calE(S)}$ is a collection of pairwise transverse, simply connected (for simplicity), monotone Lagrangian submanifolds of $M$ arising as the limiting Lagrangians for some Lagrangian boundary condition $Q$. We write $\calI_+(\mathbf{L})$ for the set of tuples of points $\mathbf{x}^+ = \{x_e^+ \in L_{e-1} \cap L_e\}_{e \in \calE_+(S)}$ and $\calI_-(\mathbf{L})$ for the set of tuples of points $\mathbf{x}^- = \{x_e^- \in L_e \cap L_{e-1}\}_{e \in \calE_-(S)}$. We may then define the moduli space $\calM_E(\bfx^-, \bfx^+)$, which consists of finite energy pseudoholomorphic sections $u: S \longrightarrow E$ satisfying the following boundary conditions and asymptotics:
\begin{itemize}
	\item $u(I_e) \subset Q|_{I_e}$ for all $e \in \calE(S)$.
	\item $\displaystyle \lim_{s \to \pm \infty} u(\epsilon_{S,e}(s,t)) = x_e^\pm$ for all $e \in \calE_\pm(S)$.
\end{itemize}
For generic almost complex structures on $(M, \omega)$, $\calM_E(\bfx^-, \bfx^+)$ is a smooth, oriented manifold whose zero-dimensional component $\calM_E(\bfx^-, \bfx^+)_0$ is a finite set of points. Hence the Lefschetz fibration with strip-like ends $\pi: E \longrightarrow S$ determines a {\bf relative invariant} $\Phi_E$ in Floer homology defined on the chain level by
\[
	C\Phi_E: \bigotimes_{e \in \calE_-(S)} \CF(L_e, L_{e-1}) \longrightarrow \bigotimes_{e \in \calE_+(S)} \CF(L_{e-1}, L_e),
\]
\[
	C\Phi_E\left(\bigotimes_{e \in \calE_-(S)} x_e^-\right) = \sum_{\bfx^+ \in \calI_+(\mathbf{L})} \#\calM_E(\bfx^-,\bfx^+)_0 \bigotimes_{e \in \calE_+(S)} x_e^+.
\]

\bibliographystyle{plain}
\bibliography{traceless}

\end{document}